\documentclass[11pt,a4paper]{article}

\usepackage[utf8]{inputenc}
\usepackage[T1]{fontenc}
\usepackage[english]{babel}
\usepackage{lmodern}
\usepackage{float}
\usepackage{amsmath,amssymb,amsthm,amsfonts}
\numberwithin{equation}{section}
\usepackage{mathtools}
\usepackage{bm}

\usepackage[margin=2.5cm]{geometry}
\usepackage{enumitem}
\usepackage{microtype}

\usepackage[round,authoryear]{natbib}
\usepackage[colorlinks=true,
            citecolor=blue,
            linkcolor=blue,
            urlcolor=blue]{hyperref}

\theoremstyle{plain}
\newtheorem{theorem}{Theorem}[section]

\newtheorem{proposition}[theorem]{Proposition}

\theoremstyle{definition}
\newtheorem{definition}[theorem]{Definition}
\newtheorem{example}[theorem]{Example}

\theoremstyle{remark}
\newtheorem{remark}[theorem]{Remark}

\newcommand{\R}{\mathbb{R}}
\newcommand{\N}{\mathbb{N}}

\newcommand{\Rp}{\R_{+}}

\newcommand{\E}{\mathbb{E}}

\newcommand{\im}{\mathrm{i}}                 % imaginary unit
\newcommand{\ip}[2]{\langle #1,\, #2 \rangle} % scalar product
\newcommand{\charf}{\varphi}                  % characteristic function

\newcommand{\Mplus}{\mathcal{M}_{+}}
\newcommand{\Sclass}{\mathcal{S}}             % self-decomposable class / Schwartz
\DeclareMathOperator{\Hess}{Hess}
\begin{document}
% =====================================================================

\title{A Kernel Approach to Multivariate Generalized Gammas Convolutions}

\author{%
  L\'eo Gonin\thanks{Corresponding author: \texttt{leo.gonin@univ-lyon1.fr}.} \\
  \and
  Esterina Masiello\thanks{Corresponding author: \texttt{esterina.masiello@univ-lyon1.fr}.} \\
  \and
  V\'eronique Maume-Deschamps\thanks{Corresponding author: \texttt{veronique.maume@univ-lyon1.fr}.} \\
  \\
}
\date{\today}

\maketitle
  \begin{center}
  \normalsize Université Lyon 1, Centrale Lyon, INSA Lyon, Université Jean Monnet, CNRS, ICJ UMR5208, 69622 Villeurbanne, France. \\
  \end{center}

\begin{abstract}
We propose a kernel-based estimation framework for Multivariate
Generalized Gamma convolutions (MGGC), a class of probability
distributions on $\R_{+}^{d}$. MGGC are useful for risk modeling e.g. Their densities are in general not explicit. The construction relies on a new Stein
operator for the MGGC class, derived from a partial
differential equation satisfied by the characteristic function and
expressed as an integral against the underlying Thorin measure. This leads to construct a new kernel and an estimation procedure in the derived reproducing kernel Hilbert space (RKHS); with consistency results. Numerical
experiments illustrate the practical behaviour of the proposed
estimator.
\end{abstract}
\bigskip

\noindent\textbf{Keywords:}
generalized Gamma convolutions; Thorin measure; Stein operator;
reproducing kernel Hilbert spaces; Kernel Stein Discrepancy;
minimum-discrepancy estimation; multivariate probability
distributions.

% =====================================================================
\section*{Introduction}
\addcontentsline{toc}{section}{Introduction}
% =====================================================================

Multivariate Generalized Gamma Convolutions (MGGC) form a rich class of infinitely divisible
distributions introduced by Thorin and later
developed systematically by Bondesson
\citep{Thorin1977Pareto,Thorin1977Lognormal,Bondesson1992}. Their
multivariate extensions provide flexible models on $\R_{+}^{d}$ and are
particularly suited to the modeling of positive dependent quantities, useful e.g for risk modeling \cite{LavernyFerrieroNisipasu2022}.
However, statistical inference within this class remains difficult. Except
in specific cases, the density of a MGGC is not explicit,
and existing estimation procedures often rely either on projections from a
known density or on series expansions whose numerical implementation becomes
delicate in high dimension
\citep{MilesFurmanKuznetsov2021,LavernyEtAl2021,Laverny2022Projections}. The aim of this paper is to propose an alternative estimation framework for MGGC distributions which avoids the explicit evaluation of the density. The
main idea is to derive a Stein operator directly from the equation satisfied
by the characteristic function of a MGGC. This operator is expressed in
terms of the Thorin measure and therefore remains naturally adapted to both
discrete and continuous Gamma convolutions. We combine this operator
with RKHS techniques in order to construct a
Kernel Stein Discrepancy tailored to the MGGC class. The paper is organised as follows. Section~\ref{sec:background-ggc} recalls
the definition and main properties of generalized gamma convolutions. Sections~\ref{sec:background-rkhs-kme} and~\ref{sec:background-stein}
review the RKHS, kernel mean embedding, Stein operator and Kernel Stein Discrepancy tools used throughout the article. In Section~\ref{sec:stein-ggc}, we derive the Stein operators associated to MGGC distributions and relate them to the underlying L\'evy and
gamma subordinator representations. Section~\ref{sec:ksd-ggc} introduces
the corresponding Kernel Stein Discrepancy, establishes its main theoretical
properties and studies the associated minimum-discrepancy estimator. Finally,
Section~\ref{sec:numerical-applications} presents numerical experiments
illustrating the behaviour of the proposed method in univariate,
multivariate and higher-dimensional settings.
% =====================================================================
\section{Background on MGGC}
\label{sec:background-ggc}
% =====================================================================
The Generalized Gamma Convolutions were introduced by \citet{Thorin1977Lognormal} to establish the infinite divisibility of the lognormal and Pareto distributions, and were later developed in depth by \citet{Bondesson1992}; their multivariate counterpart (MGGC) extends this construction to $\mathbb{R}_+^d$, preserving infinite divisibility and an interpretable parametrization through Thorin measures. Despite the interest of the MGGC class, the statistical literature on the estimation of the MGGC class is sparse.
\citet{FurmanHackmannKuznetsov2017}, \citet{Miles2023Thesis} and \citet{MilesFurmanKuznetsov2021}
proposed projection procedures from a known density onto the class of finite
gamma convolutions. \citet{LavernyEtAl2021} bridged the
multivariate gap by exploiting tensorised Laguerre expansions and provided
the first consistent estimation procedure in dimension $d \geq 1$;
\citet{Laverny2022Projections} subsequently scaled the methodology up to
high-dimensional regimes through random projections of the dataset. These two papers give consistent approximations of MGGC densities but require a very careful implementation with arbitrary precision. More
recent contributions concerning the analytic behaviour of the class
include the work of \citet{Sayit2024}, who proves that, within the MGGC class,
weak convergence already implies convergence of the mean -- a robustness
property of independent interest in financial applications for example.

We propose a novel approach to  estimate Mulivariate Generalized Gammas Convolutions using RKHS and Stein operator. It leads to the estimation of the parameters and a better understanding of the MGGC structure.

% ---------------------------------------------------------------------
\subsection{Multivariate generalized Gamma convolutions}
\label{sec:ggc-definition}
% ---------------------------------------------------------------------

Throughout the paper, $d \in \N^{*}$ is fixed and
$X = (X_{1},\dots,X_{d})$ denotes a random vector (r.v) with values in
$\Rp^{d}$. As is customary in the multivariate setting, we abbreviate
$X = X_{1}$ when $d = 1$. The characteristic function of a random variable $Z$ is denoted $\varphi_Z(t), \ t\in \mathbb{R}^d$. Recall that
$Z \sim \Gamma(\alpha,s)$ with shape parameter $\alpha > 0$ and scale
parameter $s > 0$, admits the characteristic function
\begin{equation*}
  \charf_{X}(t) \;=\; \bigl(1 - \im\, s\, t\bigr)^{-\alpha},
  \qquad t \in \R.
\end{equation*}
Generalized gamma convolutions are obtained as weak limits of finite
convolutions of independent Gamma variables, possibly with different shapes
and scales \citep{Thorin1977Pareto,Thorin1977Lognormal,Bondesson1992}. The
multivariate counterpart, originally proposed by~\citet{Bondesson2009} and
formalised in~\citep{LavernyEtAl2021,Laverny2022Projections}, is built in the
same way using \emph{vector-valued} scales.

The framework relies on a positive measure $\nu$ defined on the positive
orthant, called the \emph{Thorin measure}.

\begin{definition}[Thorin measure]
\label{def:thorin-measure}
A \emph{Thorin measure} on $\Rp^{d}$ is a positive Radon measure
$\nu \in \Mplus(\Rp^{d})$ satisfying the integrability conditions
\begin{equation*}
  \int_{\{\|s\| \leq 1\}} \bigl|\log\|s\|\bigr|\,\nu(\mathrm{d}s) < \infty
  \qquad\text{and}\qquad
  \int_{\{\|s\| > 1\}} \frac{1}{\|s\|}\,\nu(\mathrm{d}s) < \infty.
\end{equation*}
\end{definition}

\begin{definition}[Multivariate GGC: classes $\mathcal{G}_{d,n}$ and $\mathcal{G}_{d}$]
\label{def:Gd-classes}
Let $X$ be a $d$-variate random vector with values in $\Rp^{d}$.

Let $\nu$ be a Thorin measure on $\Rp^{d}$ in the sense of
Definition~\ref{def:thorin-measure}. We say that $X$ is a
\emph{multivariate generalized gamma convolution} with Thorin
measure $\nu$, written $X \sim \mathcal{G}_{d}(\nu)$, if its
log-characteristic function writes
\begin{equation}
\log \charf_{X}(t) \;=\; -\!\int_{\Rp^{d}}\! \log\bigl(1 - \im\,\ip{s}{t}\bigr)\,\nu(\mathrm{d}s),
\qquad t \in \R^{d}.
\label{eq:cf-Gd}
\end{equation}
We denote by $\mathcal{G}_{d}$ the class of such distributions, also
called the \emph{multivariate Thorin class}.
\end{definition}
\begin{example}[Multivariate gamma convolutions and the $X = SZ$ representation]
\label{ex:gamma}
Let $X$ be a multivariate GGC whose Thorin measure is the atomic measure
\[
\nu = \sum_{i=1}^{n} \alpha_{i},\delta_{S_{\cdot,i}},
\]
where $\alpha = (\alpha_{1},\dots,\alpha_{n}) \in \mathbb{R}_{+}^{n}$ and
$S  \in \mathbb{R}_{+}^{d\times n}$. Then
\(
X \sim \mathcal{G}_{d,n}
\)
admits the representation
\(X = SZ\)
where $Z=(Z_{1},\dots,Z_{n})$ is a vector of mutually independent Gamma random variables satisfying
\(
Z_i \sim \Gamma(\alpha_i,1)
.
\)
\end{example}

\begin{example}[Pareto and lognormal distributions]
\label{ex:pareto-lognormal}
Pareto and lognormal distributions both belong to $\mathcal{G}_{1}$
\citep{Thorin1977Pareto,Thorin1977Lognormal}; they correspond to  non-trivial Thorin
measures absolutely continuous wrt Lebesgue measure.
\end{example}
% ---------------------------------------------------------------------
\subsection{Identifiability}
\label{sec:ggc-identifiability}
% ---------------------------------------------------------------------

The parametric models introduced in
Definition \ref{def:Gd-classes} serve as
targets for the Stein operator and the Kernel Stein Discrepancy
constructed below. The asymptotic guarantees of any
minimum-discrepancy estimator built on those models, however, only
translate into consistency of the parameter when the parametrisation
itself is injective. We therefore close the present background section
by recording the identifiability properties of the classes
$\mathcal{G}_{d}$ and $ \mathcal{G}_{d,n}$
and by exhibiting an explicit constrained parameter space on which
$\mathcal{G}_{d,n}$ becomes identifiable.

\begin{proposition}[Identifiability of $\mathcal{G}_{d}$]
\label{prop:gd-identifiable}
The map $\nu \mapsto \mathbb{P}_{\nu}$ is injective on the set of Thorin
measures of Definition~\ref{def:thorin-measure}: for any two Thorin
measures $\nu_{1}, \nu_{2}$ on $\Rp^{d}$,
\[
  \mathbb{P}_{\nu_{1}} \;=\; \mathbb{P}_{\nu_{2}}
  \;\Longrightarrow\;
  \nu_{1} \;=\; \nu_{2}.
\]
\end{proposition}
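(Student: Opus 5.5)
The plan is to recover $\nu$ from $\mathbb{P}_\nu$ in two stages. First pass from the law to its Lévy measure, then from the Lévy measure to the Thorin measure through uniqueness of the Laplace transform.

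Since $X\sim\mathcal{G}_d(\nu)$ takes values in $\Rp^d$, it is natural to work with the Laplace transform $L_X(u)=\E[e^{-\ip{u}{X}}]$ for $u\in\Rp^d$. The law of $X$ determines $L_X$, and by analytic continuation of \eqref{eq:cf-Gd} (replace $t$ by $\im u$) we get
\[
-\log L_X(u)=\int_{\Rp^d}\log\bigl(1+\ip{s}{u}\bigr)\,\nu(\mathrm{d}s).
\]
This step needs a justification. Both sides are analytic in $t$ on the tube $\{\operatorname{Im} t\in\Rp^d\}$, which is where $\ip{s}{t}$ has nonnegative imaginary part, so $1-\im\ip{s}{t}$ stays off the negative real axis. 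The integrability conditions of Definition~\ref{def:thorin-measure} give local uniform convergence of the integral there, and the two analytic functions agree on the boundary $\R^d$.

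Next use the Frullani identity $\log(1+a)=\int_0^\infty (1-e^{-ax})\,x^{-1}e^{-x}\,\mathrm{d}x$ with $a=\ip{s}{u}$. Substitute $x=r$ along the ray, i.e.\ write the resulting measure in polar coordinates $y=r\,s/\|s\|$, and apply Tonelli. This gives
\[
-\log L_X(u)=\int_{\Rp^d}\bigl(1-e^{-\ip{u}{y}}\bigr)\,\Pi(\mathrm{d}y),
\]
where $\Pi$ is the image of $\nu(\mathrm{d}s)\otimes r^{-1}e^{-r}\,\mathrm{d}r$ under $(s,r)\mapsto rs$. Equivalently, $\Pi$ is the Lévy measure of a Lévy–Khintchine representation with zero drift. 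Uniqueness of the Lévy triplet for infinitely divisible laws on $\Rp^d$, or directly uniqueness of the Laplace exponent representation of subordinators, shows that $\mathbb{P}_{\nu_1}=\mathbb{P}_{\nu_2}$ forces $\Pi_1=\Pi_2$.

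The second stage inverts $\nu\mapsto\Pi$, and this is where I expect the main obstacle. Disintegrate along rays: write $s=\rho\theta$ with $\theta$ in the unit sphere intersected with $\Rp^d$, and let $\nu$ decompose as $\sigma(\mathrm{d}\theta)\,\nu_\theta(\mathrm{d}\rho)$. The Lévy measure then has the form
\[
\Pi(\mathrm{d}y)=\sigma(\mathrm{d}\theta)\,k_\theta(r)\,r^{-1}\,\mathrm{d}r,
\qquad
k_\theta(r)=\int e^{-r/\rho}\,\nu_\theta(\mathrm{d}\rho),
\]
for $y=r\theta$. So on each ray the radial density $r\,\Pi$ is the Laplace transform of the image of $\nu_\theta$ under $\rho\mapsto1/\rho$.

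Rather than manipulating disintegrations directly, which raises measurability and normalisation ambiguities, I would test against product functions $g(\theta)h(r)$ with $h(r)=r e^{-\lambda r}$. This yields
\[
\int g(s/\|s\|)\,\frac{\|s\|}{1+\lambda\|s\|}\,\nu(\mathrm{d}s)\quad\text{for all }\lambda>0 .
\]
This is a Stieltjes transform of the finite measure $g(\theta)\|s\|(1+\|s\|)^{-1}\,\nu(\mathrm{d}s)$, pushed forward to the radial variable, which is finite by the integrability conditions. Injectivity of the Stieltjes transform (equivalently, an iterated Laplace transform) then identifies the push-forward for each continuous $g$. Combined with a monotone class argument this identifies $\|s\|(1+\|s\|)^{-1}\nu$ on $\Rp^d\setminus\{0\}$, hence $\nu$ there. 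The atom at $0$ does not contribute and should be excluded by convention.
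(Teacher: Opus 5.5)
Your proof is correct and follows the same architecture as the paper's. The law determines the L\'evy measure $\Pi$, which is exactly the pushforward of Proposition~\ref{prop:levy-measure-multi}, and $\Pi$ determines $\nu$.

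The real difference is in the second step. The paper does not prove injectivity of $\nu\mapsto\Pi$. It cites \citet[Theorem~3.1.1]{Bondesson1992} and \citet{LavernyEtAl2021}, and its phrase ``change of variable $u=\ip{s}{t}$'' carries no argument. You prove it: testing $\Pi$ against $g(y/\|y\|)\,\|y\|e^{-\lambda\|y\|}$ gives $\lambda\mapsto\int g(s/\|s\|)\,\|s\|(1+\lambda\|s\|)^{-1}\,\nu(\mathrm{d}s)$. This is a Stieltjes transform in the variable $1/\|s\|$, which you invert as an iterated Laplace transform, then close with a monotone-class argument on rectangles in $(s/\|s\|,\|s\|)$. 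This makes the result self-contained, which the paper's proof is not.

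Your first step takes a longer road than necessary. Proposition~\ref{prop:levy-measure-multi} already yields \eqref{eq:LK-multi} from the complex Frullani identity, so uniqueness of the L\'evy--Khintchine triplet gives $\Pi_1=\Pi_2$ without passing to Laplace transforms. Your route is still valid: agreement on the edge $\R^d$ of the tube propagates inward by Schwarz reflection along the lines $\zeta\mapsto a+\zeta b$, with $b$ in the open orthant, using continuity up to the edge. Its benefit is that all integrands are nonnegative, so Tonelli suffices. A small slip: the pushforward is under $(s,r)\mapsto rs$, as you then correctly use, not $y=r\,s/\|s\|$.

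One justification must be changed. You twice appeal to the integrability conditions of Definition~\ref{def:thorin-measure}: once for local uniform convergence on the tube, and once for finiteness of $\|s\|(1+\|s\|)^{-1}\nu$. As printed, those conditions deliver neither. They are Bondesson's conditions for the rate parametrisation, whereas \eqref{eq:cf-Gd} uses the scale $s$. In $d=1$, the measure $\nu(\mathrm{d}s)=s^{-1/2}\mathbf{1}_{(1,\infty)}(s)\,\mathrm{d}s$ satisfies both conditions. Yet it has infinite mass and infinite logarithmic moment, so $\int\log(1+su)\,\nu(\mathrm{d}s)=\infty$ and $\mathbb{P}_\nu$ does not exist.

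Draw the needed bounds from the hypothesis $X\sim\mathcal{G}_d(\nu)$ instead. Use absolute convergence of \eqref{eq:cf-Gd} at $t=(1,\dots,1)$, together with $|\operatorname{Re}\log(1-\im a)|=\tfrac12\log(1+a^2)$ and $|\operatorname{Im}\log(1-\im a)|=\arctan a$ for $a\ge0$. This gives $\int\min(1,\|s\|)\,\nu(\mathrm{d}s)<\infty$ and $\int_{\{\|s\|>1\}}\log\|s\|\,\nu(\mathrm{d}s)<\infty$, which is exactly what both of your steps use.

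On the atom at $0$: the printed condition $\int_{\{\|s\|\le1\}}|\log\|s\||\,\nu(\mathrm{d}s)<\infty$ already forces $\nu(\{0\})=0$. Under the corrected conditions you must exclude it explicitly, as you say, since it leaves no trace in $\mathbb{P}_\nu$.
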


\begin{proof}
The characteristic function $\charf_{X}$ uniquely determines the law of
$X$ on $\Rp^{d}$, so it suffices to show that
$\charf_{\nu_{1}} = \charf_{\nu_{2}}$ implies $\nu_{1} = \nu_{2}$.
The right-hand side of~\eqref{eq:cf-Gd} is a L\'evy--Khintchine exponent
whose L\'evy measure is in one-to-one correspondence with $\nu$ through
the change of variable $u = \ip{s}{t}$;
see~\citet[Theorem~3.1.1]{Bondesson1992} for the univariate case
and~\citet[Section~2]{LavernyEtAl2021} for the multivariate extension.
Hence $\charf_{\nu_{1}} = \charf_{\nu_{2}}$ implies $\nu_{1} = \nu_{2}$.
\end{proof}

For the discrete sub-classes $\mathcal{G}_{d,n}$, the Thorin measure is purely atomic with $n$ atoms,
\begin{equation*}
  \nu_{(\alpha,s)} \;=\; \sum_{j=1}^{n} \alpha_{j}\, \delta_{s_{j}},
  \qquad
  (\alpha, s) \in \R^{n}_{*} \times (\Rp^{d})^{n}
\end{equation*} 
The characterization of the Thorin measure in this case becomes the pair of parameters
$(\alpha, s)$. The mapping from $(\alpha, s)$ to $\nu_{(\alpha,s)}$, and
hence to $\mathbb{P}_{(\alpha,s)}$, is therefore subject to two
structural ambiguities:
\begin{enumerate}[label=(\roman*)]
  \item \emph{Permutation invariance.} For any permutation $\sigma$ of
        $\{1,\dots,n\}$,
        \[
          \sum_{j=1}^{n} \alpha_{j}\, \delta_{s_{j}}
          \;=\;
          \sum_{j=1}^{n} \alpha_{\sigma(j)}\, \delta_{s_{\sigma(j)}},
        \]
        so that $(\alpha, s)$ and $(\alpha_{\sigma}, s_{\sigma})$ yield
        the same distribution.
  \item \emph{Atom fusion.} If $s_{j} = s_{\ell}$ for some
        $j \neq \ell$, then
        $\alpha_{j}\,\delta_{s_{j}} + \alpha_{\ell}\,\delta_{s_{\ell}}
         = (\alpha_{j} + \alpha_{\ell})\,\delta_{s_{j}}$,
        so that $\nu_{(\alpha,s)}$ admits an equivalent representation
        with $n-1$ atoms only. The parametrisation by $n$ atoms is then
        redundant.
\end{enumerate}

In particular, $(\alpha, s) \mapsto \mathbb{P}_{(\alpha,s)}$ is not
injective on $\R^{n}_{*} \times (\Rp^{d})^{n}$, and parameters identifiability
of $\mathcal{G}_{d,n}$ requires further restrictions on the parameter
space. We now exhibit a natural and compact restriction.

\begin{definition}[Constrained parameter space]
\label{def:theta-constrained}
Fix constants
$0 < \underline{\alpha} \leq \overline{\alpha} < \infty$,
$0 < \underline{s} \leq \overline{s} < \infty$, and $\delta > 0$. Consider the compact set:
\[
  \Theta_c \;\coloneqq\; \Bigl\{\,
     (\alpha, s) \in
     [\underline{\alpha},\overline{\alpha}]^{n}
     \times
     [\underline{s},\overline{s}]^{n \times d}
     \;:\;
     \|s_{j} - s_{\ell}\| \geq \delta \;\,\forall\, j \neq \ell, \;\;
     s_{j+1,1} - s_{j,1} \geq \delta \;\,\forall\, j = 1,\dots,n-1
  \,\Bigr\}.
\]
\end{definition}

The constraint $\|s_{j} - s_{\ell}\| \geq \delta$ rules out atom fusion
by enforcing that $\nu_{(\alpha,s)}$ has exactly $n$ pairwise distinct
atoms, while the constraint $s_{j+1,1} - s_{j,1} \geq \delta$ fixes a
canonical ordering of the atoms by their first coordinate and removes
the permutation invariance. Note that the components $\alpha_{j}$ are
not required to be distinct: once the positions of the atoms are
uniquely labelled, the masses are automatically identified. We can see that this space is compact.

\begin{proposition}[Parameters identifiability of $\mathcal{G}_{d,n}$ on $\Theta_c$]
\label{prop:gdn-identifiable}
The map $(\alpha, s) \mapsto \mathbb{P}_{(\alpha,s)}$ is injective on the
set $\Theta$ of Definition~\ref{def:theta-constrained}: for any
$\theta_{1} = (\alpha^{(1)}, s^{(1)})$ and
$\theta_{2} = (\alpha^{(2)}, s^{(2)})$ in $\Theta_c$,
\[
  \mathbb{P}_{\theta_{1}} \;=\; \mathbb{P}_{\theta_{2}}
  \;\Longrightarrow\;
  \theta_{1} \;=\; \theta_{2}.
\]
\end{proposition}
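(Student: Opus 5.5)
The plan is to reduce the claim to Proposition~\ref{prop:gd-identifiable} and then read off the parameters from the Thorin measure using the constraints defining $\Theta_c$.

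\emph{Step 1 (reduction to Thorin measures).} Suppose $\mathbb{P}_{\theta_1}=\mathbb{P}_{\theta_2}$. For $(\alpha,s)\in\Theta_c$ the atomic measure $\nu_{(\alpha,s)}=\sum_{j}\alpha_j\delta_{s_j}$ is finite, and its support lies in $[\underline{s},\overline{s}]^d$, which is bounded away from $0$ and from infinity. Hence both integrability conditions of Definition~\ref{def:thorin-measure} hold trivially, and $\nu_{(\alpha,s)}$ is a Thorin measure. Proposition~\ref{prop:gd-identifiable} then gives
\[
\sum_{j=1}^n \alpha^{(1)}_j\delta_{s^{(1)}_j}=\sum_{j=1}^n \alpha^{(2)}_j\delta_{s^{(2)}_j}.
\]

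\emph{Step 2 (atoms are recovered as the support).} The separation constraint $\|s_j-s_\ell\|\ge\delta$ makes the $n$ atoms $s^{(i)}_1,\dots,s^{(i)}_n$ pairwise distinct, and every mass satisfies $\alpha^{(i)}_j\ge\underline{\alpha}>0$. So no two atoms merge and no atom carries zero mass. The support of the common measure is therefore exactly the $n$-point set $\{s^{(1)}_j\}_j=\{s^{(2)}_j\}_j$. Moreover, the mass at each support point is a single coefficient $\alpha^{(i)}_j$, namely $\nu(\{s^{(i)}_j\})=\alpha^{(i)}_j$.

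\emph{Step 3 (fixing the labelling).} The ordering constraint $s_{j+1,1}-s_{j,1}\ge\delta>0$ says the first coordinates are strictly increasing in $j$. This gives a unique enumeration of a given $n$-point set satisfying the constraint: sort the points by first coordinate, which are all distinct. Hence $s^{(1)}_j=s^{(2)}_j$ for every $j$. Evaluating the common measure at each atom then gives $\alpha^{(1)}_j=\nu(\{s_j\})=\alpha^{(2)}_j$, so $\theta_1=\theta_2$.

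\emph{Main obstacle.} The only substantive input is Step 1, the injectivity $\nu\mapsto\mathbb{P}_\nu$, which we take from Proposition~\ref{prop:gd-identifiable}. Within the present argument, the point needing care is that the hypotheses of that proposition are met. If one wanted a self-contained check, one could argue directly from the characteristic functions. Restrict to $t=\lambda u$ with $u\in\R^d$ fixed and compare the analytic functions $\lambda\mapsto\sum_j\alpha_j\log(1-\im\lambda\ip{s_j}{u})$. Their singularities sit at $\lambda=-\im/\ip{s_j}{u}$, with logarithmic residues $\alpha_j$. Choosing $u$ generic so that the values $\ip{s_j}{u}$ are distinct, one recovers the projected atoms and masses. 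Varying $u$ (a Cramér--Wold type argument) would then recover the atoms themselves.
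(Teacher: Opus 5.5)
Your proof is correct and takes the same route as the paper. The paper has no separate proof environment, but the paragraph just before the statement gives the same argument: use Proposition~\ref{prop:gd-identifiable} to equate the atomic Thorin measures, use the separation constraint to rule out atom fusion and the first-coordinate ordering to fix the labelling, then read off the masses. You also verify two points the paper leaves implicit: that $\nu_{(\alpha,s)}$ really is a Thorin measure (its support lies in $[\underline{s},\overline{s}]^d$, away from $0$ and $\infty$), and that $\alpha_j\ge\underline{\alpha}>0$ guarantees every atom actually appears in the support.
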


\begin{remark}[Choice of the constants]
\label{rem:identif-constants}
The constants
$\underline{\alpha},\overline{\alpha},\underline{s},\overline{s},\delta$
in Definition~\ref{def:theta-constrained} are merely required to exist;
their numerical values play no role in identifiability and only matter
when compactness of $\Theta_c$ is invoked, e.g.\ for the strong consistency
of the Minimum Stein Discrepancy Estimator~\eqref{eq:msde}. In practice,
$\underline{\alpha}, \underline{s}, \delta$ are taken arbitrarily small
and $\overline{\alpha}, \overline{s}$ arbitrarily large, so that
$\Theta_c$ covers any prescribed compact region of the parameter space
without affecting the conclusion of
Proposition~\ref{prop:gdn-identifiable}.
\end{remark}

% =====================================================================
\section{Background on RKHS and KME}
\label{sec:background-rkhs-kme}
% =====================================================================

\label{sec:rkhs-litterature}

This section presents a short overview on Reproducing Kernel Hilbert Spaces (RKHS) and their usefulness for statistical estimations. The theory of RKHS goes back to the
seminal paper of \citet{Aronszajn1950}, in which the foundational
correspondence between positive-definite kernels and Hilbert spaces of
functions was established.
The use of RKHS to represent probability measures via the so-called
\emph{Kernel Mean Embedding} (KME) was introduced by \citet{SmolaEtAl2007}. They showed that this embedding is injective. This injectivity
property leads to \emph{Maximum Mean Discrepancy} (MMD)
proposed by \citet{GrettonEtAl2012}. 
We shall use the RKHS framework to formulate a
discrepancy between probability measures that does not require the
existence of an explicit density. 
% ---------------------------------------------------------------------
\subsection{Reproducing kernel Hilbert spaces}
\label{sec:rkhs-def}
% ---------------------------------------------------------------------

Throughout this section, $\mathcal{X}$ denotes a non-empty set, typically a subset
of $\R^{d}$. We start by recalling the basic notions; the presentation
follows~\citet{Aronszajn1950,ScholkopfSmola2002,SteinwartChristmann2008}.

\begin{definition}[Reproducing kernel Hilbert space]
\label{def:rkhs}
A function $k \colon \mathcal{X} \times \mathcal{X} \to \R$ is called a \emph{kernel} on
$\mathcal{X}$ if there exist a Hilbert space $(\mathcal{H}, \langle \cdot, \cdot \rangle_{\mathcal{H}})$
and a feature map $\phi \colon \mathcal{X} \to \mathcal{H}$ such that
\begin{equation}
  k(x,y) \;=\; \langle \phi(x), \phi(y) \rangle_{\mathcal{H}},
  \qquad \forall x,y \in \mathcal{X}.
  \label{eq:kernel-feature}
\end{equation}
A Hilbert space $(\mathcal{H}, \langle \cdot, \cdot \rangle_{\mathcal{H}})$ of real-valued
functions on $\mathcal{X}$ is called a \emph{reproducing kernel Hilbert space} (RKHS)
with kernel $k$ if $k(\cdot, x) \in \mathcal{H}$ for every $x \in \mathcal{X}$ and the
\emph{reproducing property} holds:
\begin{equation*}
  f(x) \;=\; \langle f,\, k(\cdot, x) \rangle_{\mathcal{H}},
  \qquad \forall f \in \mathcal{H},\ \forall x \in \mathcal{X}.
\end{equation*}
We denote such a space by $\mathcal{H}_k$ when the dependence on $k$ has to be made
explicit.
\end{definition}

A direct consequence of~\eqref{eq:kernel-feature} is that any kernel $k$
is \emph{symmetric} ($k(x,y) = k(y,x)$) and \emph{positive semi-definite}.
The classical theorem of \citet{Aronszajn1950} states the converse: every
symmetric positive semi-definite function $k$ on $\mathcal{X}\times \mathcal{X}$ is the
reproducing kernel of a unique RKHS, with canonical feature map
$\phi(x) = k(\cdot, x)$. By the reproducing property,
\begin{equation*}
  k(x,y) \;=\; \bigl\langle k(\cdot, x),\, k(\cdot, y) \bigr\rangle_{\mathcal{H}},
\end{equation*}
which is the standard ``kernel trick'': one can compute inner products in
$\mathcal{H}$ without ever evaluating the feature map $\phi$ explicitly.

\begin{example}[Common scalar kernels]
\label{ex:kernels}
For $\mathcal{X} \subseteq \R^{d}$, classical examples include the Gaussian (RBF)
kernel
$k_{\mathrm{RBF}}(x,y) = \exp\!\bigl(-\|x-y\|^{2}/(2\sigma^{2})\bigr)$ with
bandwidth $\sigma > 0$, the Laplace kernel
$k_{\mathrm{Lap}}(x,y) = \exp(-\|x-y\|/\sigma)$, the polynomial kernel
$k_{\mathrm{poly}}(x,y) = (\langle x,y\rangle + c)^{d}$ with $c\geq 0$ and
$d \in \N^{*}$, and the Mat\'ern kernel of regularity $\nu>0$ and length
scale $\ell>0$. Each kernel induces a different regularity for the
functions in $\mathcal{H}_k$ \citep{ScholkopfSmola2002,SteinwartChristmann2008}.
\end{example}

Our goal is to model functions $f \colon \mathcal{X} \to \R^{d}$ -- which is
typically the case when working with randoms vectors. The multidimensional framework
goes back to \citet{MicchelliPontil2005} and \citet{CarmeliDeVitoToigo2006};
see the review of \citet{AlvarezRosascoLawrence2012} for a thorough
exposition.

\begin{definition}[Matrix-valued kernel and vector-valued RKHS]
\label{def:vRKHS}
A function $K \colon \mathcal{X}\times \mathcal{X}\to \R^{d \times d}$ is called a
\emph{matrix-valued kernel} if it is symmetric, $K(x,y) = K(y,x)^{\top}$,
and positive semi-definite in the sense that, for any $n \in \N^{*}$,
$x_{1},\dots,x_{n} \in \mathcal{X}$ and $v_{1},\dots,v_{n} \in \R^{d}$,
\[
  \sum_{i,j=1}^{n} v_{i}^{\top}\, K(x_{i},x_{j})\, v_{j} \;\geq\; 0.
\]
The associated vector-valued reproducing kernel Hilbert space is the
unique Hilbert space $\mathcal{H}_K$ of functions $f \colon \mathcal{X} \to \R^{d}$ for which
$K(\cdot, x) v \in \mathcal{H}_K$ for every $x \in\mathcal{X}$ and $v \in \R^{d}$, and
\[
  \langle f(x),\, v \rangle \;=\; \bigl\langle f,\, K(\cdot, x) v \bigr\rangle_{\mathcal{H}_K},
  \qquad \forall f \in \mathcal{H}_K,\ \forall x \in \mathcal{X},\ \forall v \in \R^{d}.
\]
\end{definition}

Two constructions are commonly encountered in practice:
diagonal kernels of the form
$K(x,y) = \mathrm{diag}\bigl(\lambda_{1} k_{1}(x,y),\,\dots,\,\lambda_{d} k_{d}(x,y)\bigr)$,
which assume independent components, and tensor kernels of the
form $K(x,y) = k(x,y)\,B$ with $k$ scalar and
$B \in \R^{d \times d}$ symmetric positive definite, which may encode
dependencies between components. Vector-valued RKHS will play
a central role in the diffusion variants of the Stein discrepancy
considered in Section~\ref{sec:background-stein}.

% ---------------------------------------------------------------------
\subsection{Kernel Mean Embeddings - KME}
\label{sec:kme-def}
% ---------------------------------------------------------------------

The KME maps a probability distribution to an element of a RKHS, in
analogy with the way the canonical feature map $\phi(x) = k(\cdot,x)$ maps
a single point to an element of $\mathcal{H}$. The construction goes back
to \citet{SmolaEtAl2007} and is reviewed in detail in \citet{MuandetEtAl2017}.

\begin{definition}[Kernel mean embedding]
\label{def:kme}
Let $k$ be a kernel on $\mathcal{X}$, $\mathcal{H}_{k}$ its associated
RKHS and $X$ a random variable on $\mathcal{X}$ with distribution
$\mathbb{P}_{X}$ such that $\E\!\left[\sqrt{k(X,X)}\right] < \infty$. The
\emph{kernel mean embedding} of $X$ into
$\mathcal{H}_{k}$ is the element $\mu_{X} \in \mathcal{H}_{k}$ defined by
\begin{equation}
  \mu_{\mathbb{P}_X}(x) \;=\; \E\bigl[k(x,X)\bigr] \;=\; \int_{\mathcal{X}} k(x,y)\,p(y)\,\mathrm{d}y,
  \qquad x \in \mathcal{X},
  \label{eq:kme-def}
\end{equation}
where $p$ denotes the density of $X$ whenever it exists. Given an independent and identically distributed (i.i.d).\
sample $X_{1},\dots,X_{n}$ from $\mathbb{P}_{X}$, an empirical estimator
of $\mu_{X}$ is
\begin{equation*}
  \widehat{\mu}_{\mathbb{P}_X} \;=\; \frac{1}{n}\sum_{i=1}^{n} k(\cdot,\,X_{i}).
\end{equation*}
\end{definition}

The integrability assumption guarantees that \eqref{eq:kme-def} defines a genuine element of $\mathcal{H}_{k}$
\citep{SmolaEtAl2007,MuandetEtAl2017}. The square root arises because the norm of the KME of $X$ has to be finite. It requires 
$\mathbb{E}\big[\|k(\cdot,X)\|_{\mathcal{H}_k}\big] < \infty$, and the 
reproducing property yields $\|k(\cdot,x)\|_{\mathcal{H}_k} = \sqrt{k(x,x)}$.

\begin{definition}[Characteristic kernel]
\label{def:characteristic}
A bounded measurable kernel $k$ on $\mathcal{X}$ is said to be
\emph{characteristic} (with respect to a class of probability measures
$\mathcal{P}$) if the map $\mathbb{P} \mapsto \mu_{\mathbb{P}}$ defined
by~\eqref{eq:kme-def} is injective on $\mathcal{P}$, i.e.\
\[
  \mu_{\mathbb{P}} \;=\; \mu_{\mathbb{Q}}
  \quad\Longrightarrow\quad
  \mathbb{P} \;=\; \mathbb{Q},
  \qquad \forall \mathbb{P}, \mathbb{Q} \in \mathcal{P}.
\]
\end{definition}
% ---------------------------------------------------------------------
\subsection{Maximum mean discrepancy}
\label{sec:mmd-def}
% ---------------------------------------------------------------------

The KME provides a natural way to quantify the dissimilarity between two
probability measures by measuring the distance, in $\mathcal{H}_{k}$,
between their embeddings. This is the idea underlying the
\emph{Maximum Mean Discrepancy} \cite{GrettonEtAl2012}.

\begin{definition}[Maximum Mean Discrepancy]
\label{def:mmd}
Let $X$ and $Y$ be random variables on $\mathcal{X}$ with distributions
$\mathbb{P}_{X}$ and $\mathbb{P}_{Y}$ respectively such that
$\mu_{\mathbb{P}_X},\,\mu_{\mathbb{P}_Y} \in \mathcal{H}_{k}$. The \emph{Maximum Mean
Discrepancy} between $\mathbb{P}_{X}$ and $\mathbb{P}_{Y}$ associated with
the kernel $k$ is the $\mathcal{H}_{k}$-norm of the difference of their
embeddings:
\begin{equation*}
  \mathrm{MMD}_{k}(\mathbb{P}_{X},\mathbb{P}_{Y})
  \;\coloneqq\;
  \bigl\| \mu_{\mathbb{P}_X} - \mu_{\mathbb{P}_Y} \bigr\|_{\mathcal{H}_{k}}.
\end{equation*}
\end{definition}

\noindent The following proposition is very useful for empirical estimation.

\begin{proposition}[Sample-form of the squared MMD]
\label{prop:mmd-sample}
Let $X,X' \overset{\textnormal{i.i.d.}}{\sim} \mathbb{P}_{X}$ and
$Y,Y' \overset{\textnormal{i.i.d.}}{\sim} \mathbb{P}_{Y}$, with $X$ and $Y$
independent. Under the integrability assumption of
Definition~\ref{def:kme}, the squared MMD admits the representation
\begin{equation}
  \mathrm{MMD}_{k}^{2}(\mathbb{P}_{X},\mathbb{P}_{Y})
  \;=\; \E\!\bigl[k(X,X')\bigr]
       + \E\!\bigl[k(Y,Y')\bigr]
       - 2\,\E\!\bigl[k(X,Y)\bigr].
  \label{eq:mmd-expectation}
\end{equation}
\end{proposition}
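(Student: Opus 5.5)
The plan is to expand the squared RKHS norm with the inner product and then turn each inner product of embeddings into an expectation of the kernel. By Definition~\ref{def:mmd},
\[
\mathrm{MMD}_{k}^{2}(\mathbb{P}_{X},\mathbb{P}_{Y})
= \ip{\mu_{\mathbb{P}_X}}{\mu_{\mathbb{P}_X}}_{\mathcal{H}_k}
+ \ip{\mu_{\mathbb{P}_Y}}{\mu_{\mathbb{P}_Y}}_{\mathcal{H}_k}
- 2\,\ip{\mu_{\mathbb{P}_X}}{\mu_{\mathbb{P}_Y}}_{\mathcal{H}_k}.
\]
It then suffices to prove the single identity $\ip{\mu_{\mathbb{P}}}{\mu_{\mathbb{Q}}}_{\mathcal{H}_k} = \E[k(U,V)]$ for independent $U\sim\mathbb{P}$ and $V\sim\mathbb{Q}$. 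Applying it with $(U,V)=(X,X')$, $(Y,Y')$ and $(X,Y)$ gives \eqref{eq:mmd-expectation}.

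The key step is to read the kernel mean embedding as a Bochner integral in $\mathcal{H}_k$. The integrability assumption $\E[\sqrt{k(X,X)}]<\infty$ of Definition~\ref{def:kme} says exactly that $\E\|k(\cdot,X)\|_{\mathcal{H}_k}<\infty$, since $\|k(\cdot,x)\|_{\mathcal{H}_k}=\sqrt{k(x,x)}$. Hence $\mu_{\mathbb{P}}=\E[k(\cdot,U)]$ is a well-defined element of $\mathcal{H}_k$.

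Bounded linear functionals commute with Bochner integrals. So for every $f\in\mathcal{H}_k$, the reproducing property gives
\[
\ip{f}{\mu_{\mathbb{P}}}_{\mathcal{H}_k}=\E\,\ip{f}{k(\cdot,U)}_{\mathcal{H}_k}=\E[f(U)].
\]
This also shows that the Bochner integral agrees pointwise with \eqref{eq:kme-def}: take $f=k(\cdot,x)$.

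Next take $f=\mu_{\mathbb{Q}}$ in the display above. This gives $\ip{\mu_{\mathbb{P}}}{\mu_{\mathbb{Q}}}_{\mathcal{H}_k}=\E_U[\mu_{\mathbb{Q}}(U)]$. Since $\mu_{\mathbb{Q}}(u)=\E_V[k(u,V)]$, we obtain
\[
\ip{\mu_{\mathbb{P}}}{\mu_{\mathbb{Q}}}_{\mathcal{H}_k}=\E_U\E_V[k(U,V)]=\E[k(U,V)].
\]
The last equality uses independence of $U$ and $V$ and Fubini's theorem.

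The only delicate point is justifying Fubini. The bound $|k(u,v)|\le\sqrt{k(u,u)}\sqrt{k(v,v)}$ is Cauchy--Schwarz in $\mathcal{H}_k$ applied to the feature maps. Combined with independence, it gives
\[
\E|k(U,V)|\le \E\sqrt{k(U,U)}\;\E\sqrt{k(V,V)}<\infty,
\]
so the joint expectation is absolutely convergent and the iterated expectations may be exchanged. The same bound makes each of the three terms in \eqref{eq:mmd-expectation} finite, so the formula is meaningful.
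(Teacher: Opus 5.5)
Your proof is correct and follows the same route as the argument the paper defers to \citep[Lemma~6]{GrettonEtAl2012}: expand $\|\mu_{\mathbb{P}_X}-\mu_{\mathbb{P}_Y}\|_{\mathcal{H}_k}^{2}$ into three inner products and identify each one with an expectation of $k$ over independent copies. The only difference is cosmetic: you realise the embedding as a Bochner integral, which tacitly requires strong measurability of $x\mapsto k(\cdot,x)$ (automatic, e.g., when $k$ is measurable and $\mathcal{H}_k$ is separable), whereas Gretton et al.\ define it as the Riesz representer of the bounded functional $f\mapsto\E[f(X)]$; your Cauchy--Schwarz bound justifying Fubini fills in a step that the cited proof leaves implicit.
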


\begin{proof}
See \cite{GrettonEtAl2012}, Lemma 6.
\end{proof}

Expression~\eqref{eq:mmd-expectation} is fundamental for
two reasons. First, it allows the MMD to be estimated from samples
without ever computing the embeddings themselves: replacing each
expectation by its empirical counterpart yields to an unbiased
$U$-statistic estimator. Second, when the
embeddings of $\mathbb{P}_{X}$ and $\mathbb{P}_{Y}$ admit a closed-form
expression in $\mathcal{H}_{k}$, the cross-term $\E[k(X,Y)]$ can be
evaluated analytically and the MMD becomes a tractable loss for parameter
estimation. The next proposition records the discriminating property of
the MMD that motivates this loss.

\begin{proposition}[Discriminating property of the MMD]
\label{prop:mmd-discriminating}
Let $k$ be a characteristic kernel on $\mathcal{X}$ in the sense of
Definition~\ref{def:characteristic}. Then, for any probability measures
$\mathbb{P}$ and $\mathbb{Q}$ on $\mathcal{X}$ with finite KMEs,
\begin{equation*}
  \mathrm{MMD}_{k}(\mathbb{P},\mathbb{Q}) \;=\; 0
  \quad\Longleftrightarrow\quad
  \mathbb{P} \;=\; \mathbb{Q}.
\end{equation*}
\end{proposition}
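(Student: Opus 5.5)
We prove the two implications separately. The direction $\mathbb{P}=\mathbb{Q}\Rightarrow \mathrm{MMD}_k(\mathbb{P},\mathbb{Q})=0$ is immediate from Definition~\ref{def:mmd}. If the two measures coincide, then by \eqref{eq:kme-def} their kernel mean embeddings are the same element of $\mathcal{H}_k$: both are defined as the Bochner integral of the same map $x\mapsto k(\cdot,x)$ against the same measure. Their difference therefore has zero norm. Equivalently, one can read it off the sample form \eqref{eq:mmd-expectation} of Proposition~\ref{prop:mmd-sample}: when $\mathbb{P}_X=\mathbb{P}_Y$, the three expectations $\E[k(X,X')]$, $\E[k(Y,Y')]$ and $\E[k(X,Y)]$ are equal, so the right-hand side is $0$.

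For the converse, assume $\mathrm{MMD}_k(\mathbb{P},\mathbb{Q})=0$. Since $\|\cdot\|_{\mathcal{H}_k}$ is a norm on the Hilbert space $\mathcal{H}_k$, the equality $\|\mu_{\mathbb{P}}-\mu_{\mathbb{Q}}\|_{\mathcal{H}_k}=0$ forces $\mu_{\mathbb{P}}=\mu_{\mathbb{Q}}$ as elements of $\mathcal{H}_k$. By the reproducing property this gives $\mu_{\mathbb{P}}(x)=\mu_{\mathbb{Q}}(x)$ for every $x\in\mathcal{X}$, and indeed $\E_{\mathbb{P}}f=\langle f,\mu_{\mathbb{P}}\rangle_{\mathcal{H}_k}=\E_{\mathbb{Q}}f$ for all $f\in\mathcal{H}_k$. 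The last identity uses that the Bochner integral commutes with the continuous linear functional $\langle f,\cdot\rangle$, which is licit thanks to the integrability condition $\E[\sqrt{k(X,X)}]<\infty$. We then invoke Definition~\ref{def:characteristic}: the kernel $k$ is characteristic, so the embedding map is injective, and $\mu_{\mathbb{P}}=\mu_{\mathbb{Q}}$ yields $\mathbb{P}=\mathbb{Q}$.

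The only point needing care is bookkeeping about the domain of the embedding, not a genuine mathematical obstacle. The characteristic property is stated relative to a class $\mathcal{P}$ of probability measures, so we read the proposition as concerning $\mathbb{P},\mathbb{Q}\in\mathcal{P}$ with well-defined embeddings, as the hypothesis ``finite KMEs'' guarantees. Under that reading, the argument is just the chain: norm zero implies equal embeddings, and injectivity of the embedding implies equal measures.
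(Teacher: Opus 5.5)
Your argument is correct and is exactly the reasoning the paper leaves implicit: it states the proposition without proof, as an immediate consequence of Definitions~\ref{def:mmd} and~\ref{def:characteristic}. Namely, $\mathrm{MMD}_k(\mathbb{P},\mathbb{Q})=0$ if and only if $\mu_{\mathbb{P}}=\mu_{\mathbb{Q}}$ in $\mathcal{H}_k$, and injectivity of the embedding turns this into $\mathbb{P}=\mathbb{Q}$; your detour through $\E_{\mathbb{P}}f=\E_{\mathbb{Q}}f$ for $f\in\mathcal{H}_k$ is harmless but not needed, since injectivity applies directly to the embeddings.
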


% =====================================================================
\section{Background on Stein Operator and KSD}
\label{sec:background-stein}
% =====================================================================
\label{sec:stein-litterature}

This section reviews Stein's operator constructions and Kernel Stein Discrepancy (KSD). In the context of RKHS, Stein's method allows us to construct kernels adapted to specific distributions. Stein's method goes back to the seminal paper of \citet{Stein1972} and was systematized in the monograph of \citet{Stein1986}. The
\emph{density approach}, in which a Stein operator is built directly from
the score function $\nabla \log p$ of a target density $p$, has been
extended from the univariate case to a general multivariate framework by
\citet{MijouleRaicReinertSwan2023}, while \citet{GauntMijouleSwan2023}
recently provided an algebraic formalism for combining Stein operators
across independent factors. Outside the density approach,
\citet{ArrasAzmoodehPolySwan2017} derived Stein characterisations for
linear combinations of (possibly dependent) Gamma random variables
directly from a polynomial ODE satisfied by the characteristic function
-- a construction that we will use in Section~\ref{sec:stein-ggc}.

The combination of Stein's method with the kernel formalism of
Section~\ref{sec:background-rkhs-kme} is due, independently and
simultaneously, to \citet{LiuLeeJordan2016} and
\citet{ChwialkowskiStrathmannGretton2016}, who introduced what is now
called the \emph{Kernel Stein Discrepancy} (KSD) and proposed its use as
a goodness-of-fit test that does not require knowledge of the
normalising constant of $p$. Asymptotic and
non-asymptotic results for KSD-based estimators were established by
\citet{BarpEtAl2019}, with consistency, asymptotic normality
and robustness guarantees. The bulk of this
literature, however, relies on the explicit form of $\nabla \log p$,
which is precisely what is missing for distributions in the GGC class. Our approach instead derives the Stein operator directly from the characteristic function.

% ---------------------------------------------------------------------
\subsection{Stein operators}
\label{sec:stein-operator-def}
% ---------------------------------------------------------------------
A Stein operator associated to $X\sim \mathbb{P}$
 is a linear operator $\mathcal{T}_{\mathbb{P}}$ acting on a class
$\mathcal{F}$ of test functions, such that the expectation of
$\mathcal{T}_{\mathbb{P}} f(X)$ under $\mathbb{P}$ vanishes for every $f \in \mathcal{F}$.

\begin{definition}[Stein operator]
\label{def:stein-operator}
Let $\mathbb{P}$ be a probability measure on $\mathcal{X} \subseteq \R^{d}$
and let $\mathcal{F}$ be a class of (sufficiently regular) test functions
$f \colon \mathcal{X} \to \R^{d}$. A linear operator
$\mathcal{T}_{X} \colon \mathcal{F} \to L^{1}(\mathbb{P})$ is called a
\emph{Stein operator} on $\mathcal{F}$ if
\begin{equation}
  \E_{ }\!\left[\,(\mathcal{T}_{\mathbb{P}} f)(X)\,\right] \;=\; 0,
  \qquad \forall\, f \in \mathcal{F}.
  \label{eq:stein-identity}
\end{equation}
The pair $(\mathcal{T}_{\mathbb{P}}, \mathcal{F})$ is called a \emph{Stein pair} and identity~\eqref{eq:stein-identity} is the
associated \emph{Stein identity}.
\end{definition}

The class $\mathcal{F}$ is generally chosen so that the Stein
identity~\eqref{eq:stein-identity} characterises $\mathbb{P}$, in the
sense that $Y \overset{d}{=} X$ if and only if
$\E[(\mathcal{T}_{\mathbb{P}} f)(Y)] = 0$ for every $f \in \mathcal{F}$.

\begin{proposition}[Multivariate density-based Stein operator]
\label{prop:stein-density-multi}
Let $\mathbb{P}$ admit a strictly positive, differentiable density $p$
on $\R^{d}$ and let $\mathcal{F}$ the set of admissible tests functions. Then the \emph{Langevin--Stein operator}
\begin{equation*}
  (\mathcal{T}_{\mathbb{P}} f)(x)
  \;=\; \mathrm{div}(f(x)) \,+\, \ip{\nabla \log p(x)}{f(x)},
  \qquad x \in \R^{d},
\end{equation*}
is a Stein operator on $\mathcal{F}$, where
$\mathrm{div}(f(x)) = \sum_{i=1}^{d} \partial f_{i}/\partial x_{i}$ is the
divergence of $f$ and with:
\begin{equation*}
  \mathcal{F} \;=\; \Bigl\{ f \in C^{1}(\R^{d}; \R^{d}) \,:\,
       \lim_{\|x\| \to \infty} \|f(x)\|\,p(x) = 0 \Bigr\}.
\end{equation*}
with $p$ the density of the r.v. X.
\end{proposition}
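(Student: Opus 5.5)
We prove that $\mathcal{T}_{\mathbb{P}}$ is a Stein operator by integration by parts: write $\E[(\mathcal{T}_{\mathbb{P}} f)(X)]$ as an integral against $p$ and show that it equals the integral of a divergence, which vanishes under the decay condition defining $\mathcal{F}$.

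\textbf{Step 1: rewrite the integrand as a divergence.} Since $p>0$ and $p$ is differentiable, $\nabla \log p = \nabla p / p$. For $f\in\mathcal{F}$,
\[
  (\mathcal{T}_{\mathbb{P}} f)(x)\,p(x) = p(x)\,\mathrm{div}(f(x)) + \ip{\nabla p(x)}{f(x)} = \mathrm{div}\bigl(p f\bigr)(x).
\]
Hence
\[
  \E\bigl[(\mathcal{T}_{\mathbb{P}} f)(X)\bigr] = \int_{\R^{d}} \mathrm{div}(p f)(x)\,\mathrm{d}x .
\]

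\textbf{Step 2: truncate and apply the divergence theorem.} Let $B_R$ be the ball of radius $R$. The divergence theorem gives
\[
  \int_{B_R} \mathrm{div}(p f)\,\mathrm{d}x = \int_{\partial B_R} p\,\ip{f}{n}\,\mathrm{d}\sigma .
\]
The right-hand side is bounded in absolute value by
\[
  |\partial B_R| \,\sup_{\|x\|=R} \|f(x)\|\,p(x).
\]
Two things are needed to conclude: that the left-hand side converges to the full integral, and that this boundary term tends to $0$.

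\textbf{Main obstacle: the two limits.} Neither limit follows from the definition of $\mathcal{F}$ as literally stated.
\begin{itemize}
  \item[(a)] Convergence of $\int_{B_R}\mathrm{div}(pf)$ to $\int_{\R^d}\mathrm{div}(pf)$ requires $\mathcal{T}_{\mathbb{P}} f \in L^{1}(\mathbb{P})$. This is already built into Definition~\ref{def:stein-operator}, so dominated convergence applies.
  \item[(b)] The boundary term carries the factor $|\partial B_R|\propto R^{d-1}$. Pointwise decay $\|f\|p\to 0$ suffices when $d=1$, since the boundary term is then just $p f(R) - p f(-R)$. For $d\ge 2$ it does not suffice in general.
\end{itemize}

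\textbf{Handling (b) for $d\ge 2$.} I would first avoid a uniform rate by averaging over radii. Since $pf \in L^1(\R^d)$ (an integrability condition I would add to $\mathcal{F}$, or obtain from the bounded-$f$ case), we have
\[
  \int_0^\infty \int_{\partial B_R} p\|f\|\,\mathrm{d}\sigma\,\mathrm{d}R = \int_{\R^d} p\|f\| \,\mathrm{d}x < \infty .
\]
So there is a sequence $R_k\to\infty$ along which $\int_{\partial B_{R_k}} p\|f\|\,\mathrm{d}\sigma \to 0$. Passing to the limit along $R_k$ in Step 2, using (a), gives $\E[(\mathcal{T}_{\mathbb{P}} f)(X)] = 0$.

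If one prefers to keep $\mathcal{F}$ exactly as stated, the alternative is to interpret the decay condition as fast enough that $R^{d-1}\sup_{\|x\|=R}\|f\|p \to 0$. This is the standard reading in \citet{MijouleRaicReinertSwan2023}, to which I would refer for the precise admissible class.
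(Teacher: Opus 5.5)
The paper gives no argument of its own here, only a pointer to \citet{MijouleRaicReinertSwan2023}. Your proof is a self-contained version of the integration by parts behind that reference, and it is correct for the class you actually use: $f\in C^{1}$ with $\mathcal{T}_{\mathbb{P}}f\in L^{1}(\mathbb{P})$ and $pf\in L^{1}(\R^{d})$ (automatic for bounded $f$). The radial averaging is the right device, because it only needs $\liminf_{R\to\infty}\int_{\partial B_R}p\,\|f\|\,\mathrm{d}\sigma=0$ and no pointwise rate. Two small adjustments are needed:
\begin{enumerate}[label=(\roman*)]
  \item By Definition~\ref{def:stein-operator}, $\mathcal{T}_{\mathbb{P}}f\in L^{1}(\mathbb{P})$ is part of what ``Stein operator'' asserts, not something you may take as given. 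It therefore has to be written into $\mathcal{F}$, just like your $L^{1}$ condition on $pf$.
  \item The Gauss--Green formula on $B_R$ is applied to $pf$, which is only as regular as $p$. Either read ``differentiable'' as $C^{1}$, or invoke a divergence theorem for everywhere-differentiable fields with integrable divergence.
\end{enumerate}

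Your caveat (b) is well founded: as printed, the proposition is false for $d\ge 2$, even once $\mathcal{T}_{\mathbb{P}}f\in L^{1}(\mathbb{P})$ is imposed. Take $d=2$ and $p$ the standard Gaussian density. Let $\chi\in C^{\infty}([0,\infty))$ with $\chi=0$ on $[0,\tfrac12]$ and $\chi=1$ on $[1,\infty)$, set $g(x)=\chi(\|x\|)\,x/\|x\|^{2}$, and put $f=g/p$.
\begin{itemize}
  \item Then $f\in C^{\infty}$ and $\|f(x)\|\,p(x)=\|g(x)\|=\chi(\|x\|)/\|x\|\to 0$, so $f$ lies in the stated class $\mathcal{F}$.
  \item Since $x/\|x\|^{2}$ is divergence-free away from the origin, $p\,\mathcal{T}_{\mathbb{P}}f=\mathrm{div}\,g=\chi'(\|x\|)/\|x\|$. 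This is bounded with compact support, so $\mathcal{T}_{\mathbb{P}}f\in L^{1}(\mathbb{P})$.
  \item Yet $\E[(\mathcal{T}_{\mathbb{P}}f)(X)]=\int_{\R^{2}}\mathrm{div}\,g\,\mathrm{d}x=2\pi\int_{0}^{\infty}\chi'(r)\,\mathrm{d}r=2\pi\neq 0$. In dimension $d$, use $x/\|x\|^{d}$ instead.
\end{itemize}
Here $pf=g\notin L^{1}(\R^{2})$, which is exactly what your added integrability condition excludes. So what you establish is a corrected form of the proposition. The class $\mathcal{F}$ as transcribed in the statement is too large, and the paper's citation does not repair this.
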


\begin{proof}
    See \cite{MijouleRaicReinertSwan2023}, Propostion 3.14.
\end{proof}

\begin{remark}[Limitation for the GGC class]
\label{rem:stein-limit-ggc}
Propositions ~\ref{prop:stein-density-multi} relies on the score function
$\nabla \log p$. For a distribution in $\mathcal{G}_{d}(\nu)$, however, the
density $p$ is in general not explicit. The construction will therefore have to be carried out
from a different starting point: the partial differential equation
satisfied by the characteristic function $\charf_{X}$ of $X \sim \mathcal{G}_{d}(\nu)$,
following the strategy initiated by~\citet{ArrasAzmoodehPolySwan2017} for
linear combinations of Gamma distributions. This is the subject of
Section~\ref{sec:stein-ggc}.
\end{remark}

% ---------------------------------------------------------------------
\subsection{Kernel Stein Discrepancy}
\label{sec:ksd-def}
% ---------------------------------------------------------------------

We now combine the Stein operator $\mathcal{T}_{\mathbb{P}}$ of
Section~\ref{sec:stein-operator-def} with the RKHS framework of
Section~\ref{sec:background-rkhs-kme}. In fact, Stein operators allow to construct kernels adapted to $\mathbb{P}$.

\begin{definition}[Kernel Stein Discrepancy \cite{LiuLeeJordan2016}]
\label{def:ksd}
Let $X\sim\mathbb{P}$ be a r.v on $\mathcal{X} \subseteq \R^{d}$
admitting a Stein pair $(\mathcal{T}_{\mathbb{P}}, \mathcal{F})$, and let
$\mathcal{H}_{K}$ be a vector-valued RKHS of functions
$f \colon \mathcal{X} \to \R^{d}$ with matrix-valued reproducing kernel
$K$, such that $\mathcal{H}_{K} \cap \mathcal{F}$ is dense in
$\mathcal{F}$. The \emph{Kernel Stein Discrepancy} from a r.v $Y\sim\mathbb{Q}$ on $\mathcal{X}$ is
\begin{equation}
  \mathrm{KSD}(\mathbb{P}, \mathbb{Q})
  \;\coloneqq\;
  \sup_{f \in \mathcal{H}_{K},\,\|f\|_{\mathcal{H}_{K}} \leq 1}
       \E\!\left[\,(\mathcal{T}_{\mathbb{P}} f)(Y)\,\right].
  \label{eq:ksd-sup}
\end{equation}
\end{definition}

\begin{definition}[Stein kernel]
\label{def:stein-kernel}
Let $K$ be a (matrix-valued) reproducing kernel on $\mathcal{X}$ and
$\mathcal{T}_{\mathbb{P}}$ a Stein operator for $X$. The \emph{Stein
kernel} associated to $X$ and $K$ is
\begin{equation*}
  K_{\mathbb{P}}(x,y)
  \;\coloneqq\;
  \mathcal{T}_{\mathbb{P}}^{(x)}\, \mathcal{T}_{\mathbb{P}}^{(y)}\, K(x,y),
  \qquad x, y \in \mathcal{X},
\end{equation*}
where $\mathcal{T}_{\mathbb{P}}^{(x)}$ (resp.\ $\mathcal{T}_{\mathbb{P}}^{(y)}$) denotes the
action of $\mathcal{T}_{X}$ on the function
$x \mapsto K(x,y)$ (resp.\ $y \mapsto K(x,y)$).
\end{definition}

\noindent \eqref{eq:stein-identity} leads to very simple form for the KSD for Stein's kernels.

\begin{proposition}[]
\label{prop:ksd-norm}
Under the assumptions of Definitions~\ref{def:ksd}
and~\ref{def:stein-kernel}, the Stein kernel $K_{\mathbb{P}}$ is positive
semi-definite and the kernel mean embedding of $X$ in
$\mathcal{H}_{K_{\mathbb{P}}}$ is:
\begin{equation}
  \mu_{\mathbb{P}}^{K_{\mathbb{P}}}
  \;\coloneqq\; \E \bigl[K_{\mathbb{P}}(\cdot, X)\bigr]
  \;=\; 0.
  \label{eq:kme-zero}
\end{equation}
Consequently, KSD verifing:
\begin{equation}
  \mathrm{KSD}^{2}(\mathbb{P}, \mathbb{Q})
  \;=\; \bigl\| \mu_{\mathbb{Q}}^{K_\mathbb{P}} \bigr\|_{\mathcal{H}_{K_{\mathbb{P}}}}^{2}
  \;=\; \mathrm{MMD}_{K_{\mathbb{P}}}^{2}(\mathbb{P}, \mathbb{Q})
  \;=\; \E\bigl[K_{\mathbb{P}}(Y, Y')\bigr],
  \label{eq:ksd-norm}
\end{equation}
where $Y, Y' \overset{\textnormal{i.i.d.}}{\sim} \mathbb{Q}$ and
$\mu_{\mathbb{Q}}^{K_{\mathbb{P}}} = \E[K_{X}(\cdot, Y)]$
is the KME of $Y$ in $\mathcal{H}_{K_{\mathbb{P}}}$.
\end{proposition}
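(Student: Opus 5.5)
The plan is to reduce everything to the reproducing property of $\mathcal{H}_{K}$ and to Bochner integration in that space. First, I will write the Stein operator as a linear functional of $f$ evaluated at a point. For each $x \in \mathcal{X}$, the map $f \mapsto (\mathcal{T}_{\mathbb{P}} f)(x)$ is assumed to be a continuous linear functional on $\mathcal{H}_{K}$. This holds under standard regularity: $K$ is differentiable enough that derivatives of $f$ are reproduced by $\partial K$. By the Riesz theorem there is then $\xi_{x} \in \mathcal{H}_{K}$ with $(\mathcal{T}_{\mathbb{P}} f)(x) = \langle f, \xi_{x}\rangle_{\mathcal{H}_{K}}$. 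Concretely, $\xi_{x} = \mathcal{T}_{\mathbb{P}}^{(x)} K(\cdot,x)$, where the operator acts on the second argument. Applying the same identity to $f = \xi_{y}$ gives
\[
K_{\mathbb{P}}(x,y) = \mathcal{T}_{\mathbb{P}}^{(x)}\mathcal{T}_{\mathbb{P}}^{(y)}K(x,y) = \langle \xi_{x}, \xi_{y}\rangle_{\mathcal{H}_{K}}.
\]
This is a feature-map representation in the sense of \eqref{eq:kernel-feature}. Positive semi-definiteness and symmetry of $K_{\mathbb{P}}$ follow at once, since
\[
\sum_{i,j} c_{i}c_{j}K_{\mathbb{P}}(x_{i},x_{j}) = \Bigl\|\sum_{i} c_{i}\xi_{x_{i}}\Bigr\|^{2} \ge 0.
\]

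Next, I will compute the KSD. Assume $\E\|\xi_{Y}\|_{\mathcal{H}_{K}} = \E\sqrt{K_{\mathbb{P}}(Y,Y)} < \infty$, which is the integrability assumption of Definition~\ref{def:kme}. Then the Bochner integral $\bar\xi_{\mathbb{Q}} = \E[\xi_{Y}]$ exists, and expectation commutes with the inner product:
\[
\E[(\mathcal{T}_{\mathbb{P}} f)(Y)] = \langle f, \bar\xi_{\mathbb{Q}}\rangle.
\]
Taking the supremum over the unit ball in \eqref{eq:ksd-sup} gives $\mathrm{KSD}(\mathbb{P},\mathbb{Q}) = \|\bar\xi_{\mathbb{Q}}\|_{\mathcal{H}_{K}}$. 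Expanding the squared norm with an independent copy $Y'$ yields
\[
\mathrm{KSD}^{2} = \E\langle \xi_{Y},\xi_{Y'}\rangle = \E[K_{\mathbb{P}}(Y,Y')],
\]
which is the last equality of \eqref{eq:ksd-norm}.

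Then I will identify this quantity with the KME in $\mathcal{H}_{K_{\mathbb{P}}}$. The map $\bar\xi \mapsto \langle \cdot\,, \bar\xi\rangle$, restricted to the closed span of the $\xi_{x}$, is an isometry onto $\mathcal{H}_{K_{\mathbb{P}}}$: this is the standard feature-space description of an RKHS, with $K_{\mathbb{P}}(\cdot,y) \leftrightarrow \xi_{y}$. Under it, $\mu^{K_{\mathbb{P}}}_{\mathbb{Q}} = \E[K_{\mathbb{P}}(\cdot,Y)]$ corresponds to $\bar\xi_{\mathbb{Q}}$. Hence $\|\mu^{K_{\mathbb{P}}}_{\mathbb{Q}}\|_{\mathcal{H}_{K_{\mathbb{P}}}} = \mathrm{KSD}(\mathbb{P},\mathbb{Q})$.

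For \eqref{eq:kme-zero}, I evaluate pointwise:
\[
\mu^{K_{\mathbb{P}}}_{\mathbb{P}}(x) = \E[\langle \xi_{x},\xi_{X}\rangle] = \E[(\mathcal{T}_{\mathbb{P}}\xi_{x})(X)] = 0,
\]
by the Stein identity \eqref{eq:stein-identity}. This requires $\xi_{x} \in \mathcal{F}$, or more generally $\mathcal{H}_{K} \subset \mathcal{F}$. Since $\mathcal{H}_{K}\cap\mathcal{F}$ is only assumed dense, I will approximate $\xi_{x}$ by elements of $\mathcal{H}_{K}\cap\mathcal{F}$ and pass to the limit using the continuity of $f \mapsto \E[(\mathcal{T}_{\mathbb{P}} f)(X)] = \langle f,\bar\xi_{\mathbb{P}}\rangle$. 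In fact this shows directly that $\bar\xi_{\mathbb{P}}$ is orthogonal to a dense set, so $\bar\xi_{\mathbb{P}}=0$. Finally, $\mathrm{MMD}^{2}_{K_{\mathbb{P}}}(\mathbb{P},\mathbb{Q}) = \|\mu_{\mathbb{P}}-\mu_{\mathbb{Q}}\|^{2} = \|\mu_{\mathbb{Q}}\|^{2}$, which closes the chain of equalities.

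The main obstacle is the functional-analytic justification rather than the algebra. Three points must be secured: that $f\mapsto(\mathcal{T}_{\mathbb{P}} f)(x)$ is bounded on $\mathcal{H}_{K}$, which needs $K\in C^{1,1}$ and derivative reproducing properties; that the Bochner integrability $\E\sqrt{K_{\mathbb{P}}(Y,Y)}<\infty$ holds for both $\mathbb{P}$ and $\mathbb{Q}$; and that the density of $\mathcal{H}_{K}\cap\mathcal{F}$ is taken in a topology where $f\mapsto\E[(\mathcal{T}_{\mathbb{P}} f)(X)]$ is continuous. I would state these as implicit standing assumptions of Definitions~\ref{def:ksd} and~\ref{def:stein-kernel}, following \citet{LiuLeeJordan2016} and \citet{ChwialkowskiStrathmannGretton2016}.
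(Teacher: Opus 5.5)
Your argument is correct apart from one fallback step, and it is more complete than the paper's proof, but it is organised differently. The paper proves \eqref{eq:kme-zero} by pulling $\mathcal{T}_{\mathbb{P}}^{(y)}$ outside the expectation and applying the Stein identity in $x$. It then asserts that the supremum in \eqref{eq:ksd-sup} equals $\|\mu_{\mathbb{Q}}^{K_{\mathbb{P}}}\|_{\mathcal{H}_{K_{\mathbb{P}}}}$ by ``the variational characterisation of the RKHS norm''. Finally it obtains $\E[K_{\mathbb{P}}(Y,Y')]$ by inserting $\mu_{\mathbb{P}}^{K_{\mathbb{P}}}=0$ into the sample form of the MMD (Proposition~\ref{prop:mmd-sample}). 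Positive semi-definiteness is stated there but never argued.

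Your Riesz representer $\xi_x$ of $f\mapsto(\mathcal{T}_{\mathbb{P}}f)(x)$ does all of this at once. The identity $K_{\mathbb{P}}(x,y)=\langle\xi_x,\xi_y\rangle_{\mathcal{H}_K}$ gives symmetry and positive semi-definiteness. The Bochner mean $\bar\xi_{\mathbb{Q}}$ turns the supremum into $\|\bar\xi_{\mathbb{Q}}\|_{\mathcal{H}_K}$, whose square is $\E[K_{\mathbb{P}}(Y,Y')]$, without using the Stein identity at all. The isometry $\overline{\mathrm{span}}\{\xi_x\}\cong\mathcal{H}_{K_{\mathbb{P}}}$ is exactly the link the paper leaves implicit, since the supremum is over the unit ball of $\mathcal{H}_K$, not of $\mathcal{H}_{K_{\mathbb{P}}}$. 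Writing $\mu_{\mathbb{P}}^{K_{\mathbb{P}}}(x)=\E[(\mathcal{T}_{\mathbb{P}}\xi_x)(X)]$ applies the Stein identity directly to $\xi_x$, as the paper's first sentence says. This avoids the unjustified interchange of $\mathcal{T}_{\mathbb{P}}^{(y)}$ and $\E$ in the paper's displayed computation. The cost is that you must assume boundedness of $f\mapsto(\mathcal{T}_{\mathbb{P}}f)(x)$ on $\mathcal{H}_K$ and Bochner integrability under both $\mathbb{P}$ and $\mathbb{Q}$, which you state.

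The step that fails as written is your fallback for \eqref{eq:kme-zero} when $\xi_x\notin\mathcal{F}$. Definition~\ref{def:ksd} only gives density of $\mathcal{H}_K\cap\mathcal{F}$ \emph{in $\mathcal{F}$}. That lets you approximate elements of $\mathcal{F}$, not an element $\xi_x\in\mathcal{H}_K\setminus\mathcal{F}$. Likewise, $\bar\xi_{\mathbb{P}}\perp\mathcal{H}_K\cap\mathcal{F}$ forces $\bar\xi_{\mathbb{P}}=0$ only if $\mathcal{H}_K\cap\mathcal{F}$ is dense in $\mathcal{H}_K$ for the Hilbert norm. That is not assumed and does not follow from density in $\mathcal{F}$. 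Drop the approximation and take $\xi_x\in\mathcal{F}$ for every $x$ as a standing hypothesis, for instance $\mathcal{H}_K\subseteq\mathcal{F}$ as in \ref{ass:D2}. This is precisely what the paper does when it asserts that $x\mapsto\mathcal{T}_{\mathbb{P}}^{(y)}K(x,y)$ lies in $\mathcal{F}$, and with it your argument is complete.
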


\begin{proof}
Identity~\eqref{eq:kme-zero} follows from the Stein
identity~\eqref{eq:stein-identity} applied to the function
$x \mapsto \mathcal{T}_{\mathbb{P}}^{(y)} K(x,y) \in \mathcal{H}_{K}$, which lies in
$\mathcal{F}$ for each fixed $y$. Indeed,
\[
  \mu_{\mathbb{P}}^{K_{\mathbb{P}}}(y)
  \;=\; \E\bigl[\mathcal{T}_{\mathbb{P}}^{(x)}\, \mathcal{T}_{\mathbb{P}}^{(y)}\, K(X,y)\bigr]
  \;=\; \mathcal{T}_{\mathbb{P}}^{(y)}\,\E\bigl[\mathcal{T}_{\mathbb{P}}^{(x)}\, K(X,y)\bigr]
  \;=\; \mathcal{T}_{\mathbb{P}}^{(y)}\, 0 \;=\; 0.
\]
By the variational characterisation of the RKHS norm and the
reproducing property of $\mathcal{H}_{K_{\mathbb{P}}}$, the
supremum~\eqref{eq:ksd-sup} equals
$\| \mu_{\mathbb{Q}}^{K_{\mathbb{P}}} \|_{\mathcal{H}_{K_{\mathbb{P}}}}$,
which gives the first equality in~\eqref{eq:ksd-norm}. Plugging
$\mu_{\mathbb{P}}^{K_{\mathbb{P}}} = 0$ into
Proposition~\ref{prop:mmd-sample} gives the result. For more details about this result, see \cite{ChwialkowskiStrathmannGretton2016}, Section 2.1.
\end{proof}

\subsubsection*{Sample-form: $U$-statistics}

Given an i.i.d.\ sample $Y_{1}, \dots, Y_{n} \sim \mathbb{Q}$, the
expectation~\eqref{eq:ksd-norm} can be approximated by either of the 
empirical estimators in \citet{LiuLeeJordan2016} and
\citet{ChwialkowskiStrathmannGretton2016}.

\begin{definition}[]
\label{def:ksd-empirical}

The empirical estimator of $\mathrm{KSD}^{2}(\mathbb{P},\mathbb{Q})$ is the U-statistic:
\begin{align*}
  \widehat{\mathrm{KSD}}_{U}^{2}
  &\;=\; \frac{1}{n(n-1)} \sum_{1 \leq i \neq j \leq n}
          K_{\mathbb{P}}(Y_{i}, Y_{j})
\end{align*}
\end{definition}

\begin{proposition}[Convergence of the empirical KSD]
\label{prop:ksd-consistency}
Suppose $K_{\mathbb{P}}$ is measurable and integrable, in the sense that
$\E[|K_{\mathbb{P}}(Y,Y')|] < \infty$ for $Y, Y' \overset{\textnormal{i.i.d.}}{\sim} \mathbb{Q}$.
Then:
\begin{equation*}
  \widehat{\mathrm{KSD}}_{U}^{2}
  \;\xrightarrow[n \to \infty]{\textnormal{a.s.}}\;
  \mathrm{KSD}^{2}(\mathbb{P},\mathbb{Q})
\end{equation*}
Moreover, $\widehat{\mathrm{KSD}}_{U}^{2}$ is unbiased.
\end{proposition}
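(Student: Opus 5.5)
Both claims follow from Hoeffding's theory of $U$-statistics of order two with symmetric kernel $h(y,y') = K_{\mathbb{P}}(y,y')$.

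\emph{Unbiasedness.} By linearity of expectation, and because for $i \neq j$ the pair $(Y_i,Y_j)$ has the law of $(Y,Y')$ with $Y,Y'$ i.i.d.\ $\sim\mathbb{Q}$,
\[
\E\bigl[\widehat{\mathrm{KSD}}_{U}^{2}\bigr] = \frac{1}{n(n-1)}\sum_{i\neq j}\E\bigl[K_{\mathbb{P}}(Y_i,Y_j)\bigr] = \E\bigl[K_{\mathbb{P}}(Y,Y')\bigr].
\]
This is finite by the integrability assumption. By \eqref{eq:ksd-norm} of Proposition~\ref{prop:ksd-norm}, it equals $\mathrm{KSD}^{2}(\mathbb{P},\mathbb{Q})$. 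The only point to check is that $K_{\mathbb{P}}$ is symmetric, so that summing over ordered pairs is harmless. Symmetry holds because $K_{\mathbb{P}}(x,y)=\mathcal{T}^{(x)}_{\mathbb{P}}\mathcal{T}^{(y)}_{\mathbb{P}}K(x,y)$ and $K(x,y)=K(y,x)^\top$. Even without symmetry, we could average over $(i,j)$ and $(j,i)$ and obtain the same expectation.

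\emph{Almost sure convergence.} Here I invoke Hoeffding's strong law of large numbers for $U$-statistics, proved via the reversed-martingale argument of Berk (1966). Let $\mathcal{F}_n$ be the $\sigma$-field generated by the order statistics of $(Y_1,\dots,Y_n)$ together with $(Y_{n+1},Y_{n+2},\dots)$. Then $U_n=\E[h(Y_1,Y_2)\mid\mathcal{F}_n]$ by exchangeability, so $(U_n)$ is a reversed martingale bounded in $L^1$ thanks to $\E|h(Y,Y')|<\infty$. The reversed martingale convergence theorem gives $U_n\to\E[h(Y_1,Y_2)\mid\mathcal{F}_\infty]$ almost surely and in $L^1$, where $\mathcal{F}_\infty=\bigcap_n\mathcal{F}_n$. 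This $\sigma$-field is contained in the exchangeable $\sigma$-field, which is trivial by the Hewitt--Savage 0--1 law. Hence the limit is the constant $\E[K_{\mathbb{P}}(Y,Y')]=\mathrm{KSD}^2(\mathbb{P},\mathbb{Q})$.

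The main (mild) obstacle is the identification $U_n=\E[h(Y_1,Y_2)\mid\mathcal{F}_n]$ together with the reversed-martingale property. Both follow from symmetry of $U_n$ in its first $n$ arguments and exchangeability of the sample. Alternatively, I would simply cite Hoeffding (1961) or Serfling (1980, Theorem~5.4.A), which require exactly $\E|h|<\infty$. Only measurability and integrability of $K_{\mathbb{P}}$ are needed; no boundedness is required.
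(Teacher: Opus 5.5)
Your proposal is correct and takes the same route as the paper, which simply invokes the strong law of large numbers for $U$-statistics and treats unbiasedness as immediate. You additionally write out the linearity-of-expectation argument and Berk's reversed-martingale proof of Hoeffding's SLLN, and, as in the paper, you rely on Proposition~\ref{prop:ksd-norm} to identify the limit $\E[K_{\mathbb{P}}(Y,Y')]$ with $\mathrm{KSD}^{2}(\mathbb{P},\mathbb{Q})$ (for $\R^{d}$-valued data, read ``order statistics'' as the $\sigma$-field of events invariant under permutations of $(Y_1,\dots,Y_n)$).
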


The proof of Proposition~\ref{prop:ksd-consistency} is a direct
application of the strong law of large numbers for $U$-statistics; see~\citep{LiuLeeJordan2016,BarpEtAl2019} for details and
for the corresponding central limit theorems.

Let $\{\mathbb{P}_{\theta}\}_{\theta \in \Theta}$ be a parametric family
of target distributions and let $Y_{1}, \dots, Y_{n}$ be i.i.d.\ from an
unknown distribution $\mathbb{Q}$. Proposition~\ref{prop:ksd-consistency} suggests the
following estimation principle, called \emph{Minimum Stein Discrepancy
Estimator} (MSDE) by~\citet{BarpEtAl2019}:
\begin{equation}
  \widehat{\theta}_{n}
  \;\in\; \underset{\theta \in \Theta}{\mathrm{argmin}}\,
          \widehat{\mathrm{KSD}}^{2}\bigl(\mathbb{P}_\theta, \hat{\mathbb{Q}}_n\bigr),
  \label{eq:msde}
\end{equation}
where $X\sim \mathbb{P}_\theta$ and, $\widehat{\mathbb{Q}}_{n} = \frac{1}{n}\sum_{i=1}^{n} \delta_{Y_{i}}$
is the empirical distribution of the sample.
% =====================================================================
\section{Stein operators for GGC}
\label{sec:stein-ggc}
% =====================================================================
 
The aim of this section is to construct an explicit Stein operator for
the multivariate GGC class $\mathcal{G}_{d}(\nu)$. Since the density of a
GGC is in general not explicit (Remark~\ref{rem:stein-limit-ggc}), the density approach of
Proposition \ref{prop:stein-density-multi}
is not applicable. We instead provide \emph{two distinct constructions}
of a Stein operator for the GGC class, both starting from the
characteristic function rather than from the density.
 
The first construction (Section~\ref{sec:arras-multi-lemma}) is a
multivariate extension of Lemma 2.1 from \cite{ArrasAzmoodehPolySwan2017} and applies to any
random vector whose characteristic function satisfies a polynomial
partial differential equation.
 
The second construction (Section~\ref{sec:stein-ggc-integral}) is
tailored specifically to a $\mathcal{G}_{d}(\nu)$ distribution and
exploits the integral form of its log-characteristic function. The
resulting operator has integral coefficients that depend only on the
Thorin measure $\nu$.
 
We next interpret the second operator in terms of the L\'evy measure of
$X$ and of the Gamma subordinators that compose the GGC
(Section~\ref{sec:levy-subordinators}). Finally, we show that on the
discrete sub-class $\mathcal{G}_{d,n}$, where the Thorin measure has
finitely many atoms, the two operators are equal
(Section~\ref{sec:gdn-equivalence}).
 
% ---------------------------------------------------------------------
\subsection{A multivariate Stein lemma via the characteristic function}
\label{sec:arras-multi-lemma}
% ---------------------------------------------------------------------
 
The univariate result of \citet[Lemma~2.1]{ArrasAzmoodehPolySwan2017}
identifies a Stein operator with polynomial coefficients for any random
variable whose characteristic function satisfies a first-order ODE with
polynomial coefficients. The following lemma is its multivariate
counterpart.
 \begin{proposition}[Multivariate Stein lemma via the characteristic function]
\label{lem:arras-multi}
Let $X\sim\mathbb{P}$ be a random vector with values in $\R^{d}$ and characteristic
function $\charf_{X}(t) = \E[e^{\im\,\ip{t}{X}}]$, $t \in \R^{d}$ with $f\in \mathbb{S}(\mathbb{R}^d, \mathbb{R}^d)$ where $\mathbb{S}(\mathbb{R}^d, \mathbb{R}^d)$ is the Schwartz class of functions. Let us denote $\widetilde{f}$, the Fourier transform of $f$ with $f: \mathbb{R}^d\to \mathbb{R}^d $.
Suppose there exist a polynomial $A \colon \R^{d} \to \R$ and a
polynomial vector field $B \colon \R^{d} \to \R^{d}$ such that
$\charf_{X}$ satisfies the partial differential equation (PDE)
\begin{equation}
  A(t)\,\nabla_{t}\charf_{X}(t)
  \;=\; \im\, B(t)\,\charf_{X}(t),
  \qquad t \in \R^{d}.
  \label{eq:cf-pde}
\end{equation}
For a multi-index $\beta = (\beta_{1},\dots,\beta_{d}) \in \N^{d}$,
denote
$D^{\beta} = \partial^{|\beta|}/(\partial x_{1}^{\beta_{1}}\cdots \partial x_{d}^{\beta_{d}})$,
and, for $A(t) = \sum_{\beta} a_{\beta}\, t^{\beta}$ and
$B_{\ell}(t) = \sum_{\beta} b_{\ell,\beta}\, t^{\beta}$ ($\ell = 1, \dots, d$),
consider the differential operators
\[
  A(D) f \;=\; \sum_{\beta} a_{\beta}\, D^{\beta} f,
  \qquad
  B_{\ell}(D) f \;=\; \sum_{\beta} b_{\ell,\beta}\, D^{\beta} f,
\]
which satisfy
$\widetilde{A(D) f_{\ell}}(u) = A(\im\, u)\,\widetilde{f_{\ell}}(u)$ and
$\widetilde{B_{\ell}(D) f_{\ell}}(u) = B_{\ell}(\im\, u)\,\widetilde{f_{\ell}}(u)$
for every $\ell = 1,\dots,d$. Then the operator
\begin{equation}
  (\mathcal{T}_{\mathbb{P}} f)(x)
  \;=\; \ip{x}{A(D)\,f(x)} \,-\, \ip{B(D)}{f(x)},
  \qquad x \in \R^{d},
  \label{eq:stein-arras-multi}
\end{equation}
is a Stein operator for $X$ on the multivariate Schwartz class
$\mathcal{F} = \Sclass(\R^{d};\R^{d})$, in the sense that
$\E[(\mathcal{T}_{\mathbb{P}} f)(X)] = 0$ for every $f \in \mathcal{F}$.
\end{proposition}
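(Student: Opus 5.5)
We prove the Stein identity by Fourier duality. For $f \in \Sclass(\R^{d};\R^{d})$ we write each expectation $\E[g(X)]$ of a Schwartz function $g$ as a pairing between $\charf_{X}$ and the Fourier transform $\widetilde{g}$. The PDE~\eqref{eq:cf-pde} is then transported, after integration by parts in $t$, onto the test function.

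Fix the convention $\widetilde{g}(u)=\int g(x)e^{-\im\ip{u}{x}}\,\mathrm{d}x$, so that $g(x)=(2\pi)^{-d}\int \widetilde{g}(u)e^{\im\ip{u}{x}}\,\mathrm{d}u$. For $g\in\Sclass(\R^d;\R)$, Fubini (valid since $\widetilde g\in\Sclass$ and $|\charf_X|\le1$) gives
\[
\E[g(X)] \;=\; (2\pi)^{-d}\int_{\R^d} \widetilde g(u)\,\charf_X(u)\,\mathrm{d}u .
\]
We apply this to $g=B_\ell(D)f_\ell$, using $\widetilde{B_\ell(D)f_\ell}(u)=B_\ell(\im u)\widetilde{f_\ell}(u)$. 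We also apply it to the moment term $x_\ell\,A(D)f_\ell(x)$, which is again Schwartz. Its transform is $\im\,\partial_{u_\ell}\bigl[A(\im u)\widetilde{f_\ell}(u)\bigr]$, since multiplication by $x_\ell$ corresponds to $\im\,\partial_{u_\ell}$ under this convention. This yields
\[
\E[(\mathcal{T}_{\mathbb{P}}f)(X)] = (2\pi)^{-d}\sum_{\ell=1}^d\int \Bigl(\im\,\partial_{u_\ell}\bigl[A(\im u)\widetilde{f_\ell}(u)\bigr] - B_\ell(\im u)\widetilde{f_\ell}(u)\Bigr)\charf_X(u)\,\mathrm{d}u.
\]

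Next we integrate by parts in $u_\ell$ in the first term. Boundary terms vanish because $A(\im u)\widetilde{f_\ell}(u)$ is Schwartz and $\charf_X$ is bounded. The term becomes $-\im\int A(\im u)\widetilde{f_\ell}(u)\,\partial_{u_\ell}\charf_X(u)\,\mathrm{d}u$.

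This step needs $\charf_X$ to be $C^1$ with at most polynomial growth of $\nabla\charf_X$. We obtain it from the PDE itself on $\{A\neq0\}$, or directly from $\E\|X\|<\infty$; in the GGC application of interest this holds under the integrability of $\nu$. We flag it as an implicit hypothesis to be added if needed.

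We then substitute~\eqref{eq:cf-pde}, written componentwise as $A(t)\partial_\ell\charf_X=\im B_\ell(t)\charf_X$. The main obstacle is the mismatch in arguments: the PDE is stated at $t$, while the Fourier side produces $A(\im u)$ and $B_\ell(\im u)$. We handle this by matching monomials. We reinterpret $A(D),B(D)$ through the symbols $a_\beta,b_{\ell,\beta}$, absorbing the powers $\im^{|\beta|}$ into the coefficients, i.e.\ choosing the operators so that their symbols are exactly $A(u)$ and $B_\ell(u)$. Equivalently, we take the stated identities $\widetilde{A(D)f}=A(\im u)\widetilde f$ as the defining normalization and verify that the PDE is invariant under the corresponding rescaling $t\mapsto \im t$ at the level of polynomial identities.

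With matched symbols, the first term becomes $-\im\cdot\im\int B_\ell\,\widetilde{f_\ell}\,\charf_X=\int B_\ell\,\widetilde{f_\ell}\,\charf_X$. This cancels the second term exactly, so the sum vanishes for every $\ell$. This gives $\E[(\mathcal{T}_{\mathbb{P}}f)(X)]=0$ on $\Sclass(\R^d;\R^d)$.

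I expect the sign and $\im$-power bookkeeping in this cancellation to be the delicate point, and I will check it first on the one-dimensional Gamma case, where $(1-\im s t)\charf'=\im\alpha s\charf$. That case recovers the known operator $x f(x)-s\,x f'(x)-\alpha s f(x)$ up to the paper's convention and fixes all normalizations.
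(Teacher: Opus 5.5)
Your Fourier bookkeeping is correct up to the integration by parts. You arrive at $-\im\int A(\im u)\,\widetilde{f_\ell}(u)\,\partial_{u_\ell}\charf_X(u)\,\mathrm{d}u$, but~\eqref{eq:cf-pde} only controls $A(u)\,\partial_{u_\ell}\charf_X(u)$ for real $u$. Neither of your two ways past this mismatch works for the operator in the statement.

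Your second option, invariance of the PDE under $t\mapsto\im t$, is false. $\charf_X$ lives on $\R^d$, and nothing links $A(\im t)\nabla\charf_X(t)$ to $B(\im t)\charf_X(t)$. For $\Gamma(\alpha,s)$ the rescaled equation would read $(1+st)\charf_X'(t)=\im\alpha s\,\charf_X(t)$, i.e.\ $1+st=1-\im st$, which holds only at $t=0$. This is exactly the unjustified step in the paper's own proof (``after substituting $u\to\im u$ in the polynomial coefficients''), so the paper does not supply the missing argument either.

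Your first option, absorbing $\im^{|\beta|}$ into the coefficients, is the right computation, but it is not a reinterpretation. It replaces $A(D)=\sum_\beta a_\beta D^\beta$ by $A(-\im\nabla)=\sum_\beta a_\beta(-\im)^{|\beta|}D^\beta$, whose symbol in your convention is $A(u)$ rather than the stated $A(\im u)$; likewise for $B$. So it proves a different statement.

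In fact the proposition cannot be proved as written, and the Gamma check you planned exposes this rather than fixing normalizations. Take $A(t)=1-\im st$ and $B=\alpha s$. The operator of the statement is defined explicitly through $\sum_\beta a_\beta D^\beta$, so no Fourier convention changes it: it is $x\bigl(f-\im s f'\bigr)-\alpha s f$. The Gamma identity $\E[Xf(X)]-\alpha s\,\E[f(X)]=s\,\E[Xf'(X)]$ then gives $\E[(\mathcal{T}_{\mathbb{P}}f)(X)]=s(1-\im)\,\E[Xf'(X)]\neq0$, for instance with $f(x)=e^{-x^2}$.

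Reading $A,B$ literally as real-valued does not rescue the statement. In that case $\nabla|\charf_X|^2=2\,\mathrm{Re}\bigl(\im|\charf_X|^2B/A\bigr)=0$ on the dense set $\{A\neq0\}$ (when $A\not\equiv0$). Hence $|\charf_X|\equiv1$ and $X$ is a Dirac mass, a vacuous case.

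What your argument with the first option does prove cleanly is the corrected identity $\E[\ip{X}{A(-\im\nabla)f(X)}]=\E[\ip{B(-\im\nabla)}{f(X)}]$; there the $-\im\cdot\im=1$ cancellation goes through. This is also what the paper actually uses in Section~\ref{sec:gdn-equivalence}, where $A(D)$ becomes $\prod_j(I-D_j^{*})$. To turn your proposal into a proof:
\begin{itemize}
\item present that change as a correction of the operator, not as an equivalent normalization;
\item drop the $t\mapsto\im t$ claim;
\item state explicitly the regularity your integration by parts needs, which you already flagged: $\charf_X\in C^1$ with polynomially bounded gradient, e.g.\ $\E\|X\|<\infty$.
\end{itemize}
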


\smallskip

\begin{proof}

We follow the strategy of \citet{ArrasAzmoodehPolySwan2017} adapted to
the multivariate setting. Let $f \in \Sclass(\R^{d}, \R^{d})$ and write
$f = (f_{1},\dots,f_{d})^{\top}$. The Fourier transforms on
$\Sclass(\R^{d})$ is given by
\[
  \widetilde{f_{\ell}}(u) \;=\; (2\pi)^{-d/2}\!\int_{\R^{d}}\! f_{\ell}(x)\, e^{\im\,\ip{u}{x}}\,\mathrm{d}x,
  \qquad u \in \R^{d},
\]
Also, we have 
$\widetilde{A(D) f_{k}}(u) = A(\im\, u)\,\widetilde{f_{k}}(u)$, summing over
$k = 1,\dots,d$ gives
\[
  \E\bigl[\ip{X}{A(D) f(X)}\bigr]
  \;=\; \frac{\im}{(2\pi)^{d/2}}\!\int_{\R^{d}}\! A(\im\, u)\,\ip{\widetilde{f}(u)}{\nabla_{u}\charf_{X}(-u)}\,\mathrm{d}u.
\]
 
\smallskip\noindent Similarly, for $B$,
\[
  \E\bigl[\ip{B(D)}{f(X)}\bigr]
  \;=\; (2\pi)^{-d/2}\!\int_{\R^{d}}\!\ip{B(\im\, u)}{\widetilde{f}(u)}\,\charf_{X}(-u)\,\mathrm{d}u.
\]
 
\smallskip\noindent The PDE~\eqref{eq:cf-pde} evaluated at $-u$ writes
$A(-u)\,\nabla_{u}\charf_{X}(-u) = -\im\, B(-u)\,\charf_{X}(-u)$, or
equivalently, after substituting $u \to \im\, u$ in the polynomial
coefficients,
$A(\im\, u)\,\nabla_{u}\charf_{X}(-u) = -\im\, B(\im\, u)\,\charf_{X}(-u)$.
Plugging this in the integrand,
\[
  A(\im\, u)\,\ip{\widetilde{f}(u)}{\nabla_{u}\charf_{X}(-u)}
  \;=\; -\im\,\ip{\widetilde{f}(u)}{B(\im\, u)\,\charf_{X}(-u)}
  \;=\; -\im\,\ip{B(\im\, u)}{\widetilde{f}(u)}\,\charf_{X}(-u).
\]
Multiplying by $\im / (2\pi)^{d/2}$ and using $\im\cdot(-\im) = 1$,
\[
  \E\bigl[\ip{X}{A(D) f(X)}\bigr]
  \;=\; \E\bigl[\ip{B(D)}{f(X)}\bigr],
\]
which is exactly $\E[(\mathcal{T}_{\mathbb{P}} f)(X)] = 0$ by definition of $\mathcal{T}_{\mathbb{P}}$
in~\eqref{eq:stein-arras-multi}. And thus $\mathcal{T}_\mathbb{P}$ is a Stein operator.
\end{proof}
% ---------------------------------------------------------------------
\subsection{ A Stein operator for a generalized gamma convolution}
\label{sec:stein-ggc-integral}
% ---------------------------------------------------------------------
 
We now construct a second Stein operator, tailored
for the structure of $\mathcal{G}_{d}(\nu)$. The starting
point is again the partial differential equation satisfied by
$\charf_{X}$, but instead of seeking polynomial coefficients $A$ and
$B$ as in Proposition~\ref{lem:arras-multi}, we exploit directly the
integral representation of $\log\charf_{X}$ in
Definition~\ref{def:Gd-classes}. The result is a clean integral form
of the Stein operator whose coefficients depend only on the Thorin
measure $\nu$, and which is valid on a (typically larger) class of test
functions.
 
\begin{theorem}[Stein operator for $\mathcal{G}_{d}(\nu)$]
\label{lem:stein-ggc}
Let $\nu$ be a Thorin measure on $\Rp^{d}$ in the sense of
Definition~\ref{def:thorin-measure}, satisfying the additional
integrability condition
\begin{equation}
  \int_{\Rp^{d}} \|s\|\,\nu(\mathrm{d}s) \;<\; \infty,
  \label{eq:nu-integrability}
\end{equation}
and let $X \sim \mathcal{G}_{d}(\nu)$. Let $\mathcal{F}_{\nu}$ denote a
class of test functions $f \colon \R^{d} \to \R^{d}$ for which the Stein identity below is well-defined. Then the operator
\begin{equation}
  (\mathcal{T}_{\nu} f)(x)
  \;=\; \ip{x}{f(x)}
       \,-\, \int_{\Rp^{d}}\!\int_{0}^{\infty}\! e^{-z}\,\ip{s}{f(x + z s)}
              \,\mathrm{d}z\,\nu(\mathrm{d}s),
  \qquad x \in \R^{d},
  \label{eq:stein-ggc-integral}
\end{equation}
\begin{equation}
  \;=\; \ip{x}{f(x)} \,-\, \int_{\Rp^{d}}\!\E\bigl[\,\ip{s}{f(x + s\, E)}\,\bigr]\,\nu(\mathrm{d}s),\qquad E\sim \mathrm{exp(1)}
  \label{eq:stein-ggc-subordinator}
\end{equation}
is a Stein operator for $X$ on $\mathcal{F}_{X}$. Equivalently, for
every $f \in \mathcal{F}_{X}$,
\[
  \E\bigl[\,\ip{X}{f(X)}\,\bigr]
  \;=\; \int_{\Rp^{d}}\!\int_{0}^{\infty}\! e^{-z}\,
        \E\bigl[\,\ip{s}{f(X + z s)}\,\bigr]\,\mathrm{d}z\,\nu(\mathrm{d}s).
\]

\end{theorem}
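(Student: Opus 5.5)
We will derive the identity from the PDE satisfied by the characteristic function and then pass from Fourier space back to expectations. Differentiating \eqref{eq:cf-Gd} under the integral sign gives
\[
  \nabla_t \charf_X(t) \;=\; \charf_X(t)\int_{\Rp^{d}} \frac{\im\, s}{1-\im\ip{s}{t}}\,\nu(\mathrm{d}s).
\]
Differentiation is justified by dominated convergence: since $|1-\im\ip{s}{t}|\ge 1$, the integrand is bounded by $\|s\|$, which is $\nu$-integrable by \eqref{eq:nu-integrability}. The key rewriting is the Laplace-type identity
\[
  \frac{1}{1-\im\ip{s}{t}} \;=\; \int_0^\infty e^{-z}\,e^{\im z\ip{s}{t}}\,\mathrm{d}z,
\]
valid because $\mathrm{Re}(1-\im\ip{s}{t})=1>0$. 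With it, the PDE becomes
\[
  \E\bigl[\im X e^{\im\ip{t}{X}}\bigr]
  \;=\; \im\int_{\Rp^{d}}\int_0^\infty e^{-z}\, s\,\E\bigl[e^{\im\ip{t}{X+zs}}\bigr]\,\mathrm{d}z\,\nu(\mathrm{d}s).
\]
The left side uses $\nabla\charf_X(t)=\E[\im X e^{\im\ip{t}{X}}]$, which requires $\E\|X\|<\infty$. This follows from \eqref{eq:nu-integrability}, since $\E[X]=\int s\,\nu(\mathrm{d}s)$, or directly from the differentiability just established. So we get the Stein identity for the exponential test functions $f(x)=v\,e^{\im\ip{t}{x}}$, for every $v\in\R^d$ and $t\in\R^d$:
\[
  \E\bigl[\ip{X}{f(X)}\bigr]=\int\!\!\int e^{-z}\,\E\bigl[\ip{s}{f(X+zs)}\bigr]\,\mathrm{d}z\,\nu(\mathrm{d}s).
\]

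Next we extend to general test functions by Fourier synthesis. Take $\mathcal F_\nu$ to contain $\Sclass(\R^d;\R^d)$, or more generally the $f$ whose components are Fourier integrals $f_\ell(x)=(2\pi)^{-d/2}\int \widehat{f_\ell}(t)e^{-\im\ip{t}{x}}\,\mathrm{d}t$ with $\widehat{f_\ell}\in L^1$. For such $f$, multiply the exponential identity (with $t\to -t$, $v=e_\ell$) by $\widehat{f_\ell}(t)$, integrate in $t$, and sum over $\ell$.

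The main obstacle is justifying the exchange of the $t$-integral with $\E$ and with $\int\!\!\int\,\mathrm{d}z\,\nu(\mathrm{d}s)$. On the left, the integrand is bounded by $\|X\|\,|\widehat f(t)|$, which is integrable because $\E\|X\|<\infty$ and $\widehat f\in L^1$. On the right, it is bounded by $e^{-z}\|s\|\,|\widehat f(t)|$, integrable by \eqref{eq:nu-integrability}. Fubini--Tonelli therefore applies and yields the identity for $f$.

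This is exactly the statement that \eqref{eq:stein-ggc-integral} has zero mean. For the representation \eqref{eq:stein-ggc-subordinator}, note that $e^{-z}\,\mathrm{d}z$ is the law of $E\sim\mathrm{Exp}(1)$, so $\int_0^\infty e^{-z}g(zs)\,\mathrm{d}z=\E[g(sE)]$. Taking $\E$ over $X$ for the stated equivalent form is again Fubini, using the same bounds.

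More general $\mathcal F_\nu$, for example bounded continuous $f$ with $\|x\|\,\|f(x)\|$ integrable, can then be reached by a density argument. Alternatively, one can bypass Fourier analysis: approximate $\nu$ by finite atomic measures, prove the identity via $X=SZ$ from Example~\ref{ex:gamma} together with the one-dimensional Gamma size-bias identity $\E[Z g(Z)]=\alpha\E[g(Z+E)]$, and pass to the limit. We would keep this route as a backup.
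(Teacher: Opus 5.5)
Your proposal is correct and follows the same route as the paper. You differentiate the log-characteristic function and use $\frac{1}{1-\im\ip{s}{t}}=\int_0^\infty e^{-z(1-\im\ip{s}{t})}\,\mathrm{d}z$ to get the identity for exponential test functions, then integrate against the Fourier transform of $f$. You also make explicit the dominated-convergence and Fubini bounds (by $\|s\|$, $\|X\|\,|\widehat f(t)|$ and $e^{-z}\|s\|\,|\widehat f(t)|$) that the paper's proof leaves implicit.
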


\smallskip
 
\begin{proof}
 
\smallskip\noindent
By Definition~\ref{def:Gd-classes} and the chain rule applied to
$\log\charf_{X}$,
\[
  \nabla_{t}\log\charf_{X}(t)
  \;=\; \im\!\int_{\Rp^{d}}\!\frac{s}{1 - \im\,\ip{s}{t}}\,\nu(\mathrm{d}s),
\]
hence
\[
  \nabla_{t}\charf_{X}(t)
  \;=\; \im\,\charf_{X}(t)\!\int_{\Rp^{d}}\!\frac{s}{1 - \im\,\ip{s}{t}}\,\nu(\mathrm{d}s).
\]
Using the integral representation
$\displaystyle \frac{1}{1 - \im\,\ip{s}{t}} = \int_{0}^{\infty}\! e^{-z(1 - \im\,\ip{s}{t})}\,\mathrm{d}z$
(valid since $\Re(1 - \im\,\ip{s}{t}) = 1 > 0$),
\begin{equation}
  \tfrac{1}{\im}\,\nabla_{t}\charf_{X}(t)
  \;=\; \int_{\Rp^{d}}\!\int_{0}^{\infty}\! s\, e^{-z(1 - \im\,\ip{s}{t})}\,\charf_{X}(t)\,\mathrm{d}z\,\nu(\mathrm{d}s).
  \label{eq:proof-cf-derivative}
\end{equation}
 
\smallskip\noindent Since
$\charf_{X}(t)\, e^{\im\, z\,\ip{s}{t}} = \E\!\bigl[e^{\im\,\ip{X + z s}{t}}\bigr]$
and 
$\tfrac{1}{\im}\,\nabla_{t}\charf_{X}(t) = \E\!\bigl[X\, e^{\im\,\ip{X}{t}}\bigr]$,
\eqref{eq:proof-cf-derivative} writes
\begin{equation}
  \E\!\bigl[X\, e^{\im\,\ip{X}{t}}\bigr]
  \;=\; \int_{\Rp^{d}}\!\int_{0}^{\infty}\! s\, e^{-z}\,\E\!\bigl[e^{\im\,\ip{X + z s}{t}}\bigr]\,\mathrm{d}z\,\nu(\mathrm{d}s).
  \label{eq:proof-cf-prob}
\end{equation}
 
\smallskip\noindent Write for the left-hand side of \eqref{eq:proof-cf-prob}:
\[
  (2\pi)^{d/2}\!\int_{\R^{d}}\!\bigl\langle\E\!\bigl[X\, e^{\im\,\ip{X}{t}}\bigr],\widetilde{f}(t)\bigr\rangle\,\mathrm{d}t
  \;=\; \E\!\left[\,\Bigl\langle X,\,(2\pi)^{d/2}\!\int_{\R^{d}}\!\widetilde{f}(t)\, e^{\im\,\ip{X}{t}}\,\mathrm{d}t\,\Bigr\rangle\,\right]
  \;=\; \E\!\bigl[\ip{X}{f(X)}\bigr],
\]
For the right-hand side of \eqref{eq:proof-cf-prob}:
\[
  \int_{\Rp^{d}}\!\int_{0}^{\infty}\! e^{-z}\,
  \E\!\left[\,\Bigl\langle s,\,(2\pi)^{d/2}\!\int_{\R^{d}}\!\widetilde{f}(t)\, e^{\im\,\ip{X + z s}{t}}\,\mathrm{d}t\,\Bigr\rangle\,\right]
  \mathrm{d}z\,\nu(\mathrm{d}s)
  \;=\; \int_{\Rp^{d}}\!\int_{0}^{\infty}\! e^{-z}\,\E\!\bigl[\ip{s}{f(X + z s)}\bigr]\,\mathrm{d}z\,\nu(\mathrm{d}s),
\]

\smallskip\noindent Combining the two sides yields the announced Stein identity
\[
  \E\!\bigl[\ip{X}{f(X)}\bigr]
  \;=\; \int_{\Rp^{d}}\!\int_{0}^{\infty}\! e^{-z}\,\E\!\bigl[\ip{s}{f(X + z s)}\bigr]\,\mathrm{d}z\,\nu(\mathrm{d}s),
\]
i.e.\ $\E[(\mathcal{T}_{\nu} f)(X)] = 0$ for every $f \in \mathcal{F}_{\nu}$ and thus, $\mathcal{T}_\nu$ is a Stein operator.
\end{proof}

Finally remark that~\eqref{eq:stein-ggc-integral} has a probabilistic interpretation that highlights the role of Gamma
subordinators in the construction. Since $z \mapsto e^{-z}$ is the
density of an exponential random variable $E \sim \mathrm{Exp}(1)$,
\[
  \int_{0}^{\infty}\! e^{-z}\,\ip{s}{f(x + z s)}\,\mathrm{d}z
  \;=\; \E\bigl[\,\ip{s}{f(x + s\, E)}\,\bigr],
\]
which gives equation \eqref{eq:stein-ggc-subordinator}.

\begin{remark}[Admissible test functions and moment condition]
The Stein identity of Theorem~\ref{lem:stein-ggc} holds, in particular, for every bounded Borel-measurable function
$f:\mathbb{R}^{d}\to\mathbb{R}^{d}$. Moreover, the condition
$\displaystyle \int_{\mathbb{R}_{+}^{d}}\|s\|\,\nu(\mathrm{d}s)<\infty$ is equivalent to the existence of a finite first moment for the associated MGGC distribution.
\end{remark}

% ---------------------------------------------------------------------
\subsection{L\'evy measure interpretation}
\label{sec:levy-subordinators}
% ---------------------------------------------------------------------
 
A multivariate generalised Gamma convolution is infinitely divisible
(indeed, even self-decomposable, see~\cite{Bondesson1992}) and admits
a L\'evy--Khintchine representation as a pure-jump subordinator  with
values in $\Rp^{d}$ \cite{BarndorffNielsenMaejimaSato2006}. We now make this representation explicit and show
that the equation~\eqref{eq:stein-ggc-integral} can be rewritten directly in terms of the L\'evy measure.
  
\begin{proposition}[L\'evy measure of $\mathcal{G}_{d}(\nu)$]
\label{prop:levy-measure-multi}
Let $\nu$ be a Thorin measure on $\Rp^{d}$ and $X \sim \mathcal{G}_{d}(\nu)$.
Define the positive Radon measure $\Pi$ on $\Rp^{d}\setminus\{0\}$ as
the pushforward of $\nu \otimes \frac{e^{-z}}{z}\,\mathrm{d}z$ under the
map $(s,z) \mapsto z s$, that is,
\begin{equation}
  \int_{\Rp^{d}\setminus\{0\}}\! g(y)\,\Pi(\mathrm{d}y)
  \;\coloneqq\;
  \int_{\Rp^{d}}\!\int_{0}^{\infty}\! g(z s)\,\frac{e^{-z}}{z}\,\mathrm{d}z\,\nu(\mathrm{d}s)
  \label{eq:levy-multi-def}
\end{equation}
for every Borel-measurable $g \colon \Rp^{d}\setminus\{0\} \to \R_{+}$.
Then $\Pi$ is the L\'evy measure of $X$, in the sense that
\begin{equation}
  \log \charf_{X}(t)
  \;=\; \int_{\Rp^{d}\setminus\{0\}}\!\!\bigl(e^{\im\,\ip{t}{y}} - 1\bigr)\,\Pi(\mathrm{d}y),
  \qquad t \in \R^{d}.
  \label{eq:LK-multi}
\end{equation}
The local integrability of $\Pi$ follows from the integration conditions
imposed on the Thorin measure $\nu$ \cite{Bondesson1992}:
$$\int_{\mathbb{R}_+^d}\bigl(1\wedge\lVert x\rVert\bigr)\,
\Pi(\mathrm{d}x)<\infty.$$
\end{proposition}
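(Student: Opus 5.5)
The plan is to use Frullani's integral. For $a>0$ we have
\[
\int_0^\infty \frac{e^{-z}-e^{-az}}{z}\,\mathrm{d}z=\log a,
\]
and this extends by analytic continuation to $\Re a>0$ with the principal branch. Take $a = 1-\im\ip{s}{t}$. Then $e^{-az}=e^{-z}e^{\im z\ip{s}{t}}$, and the identity becomes
\[
-\log\bigl(1-\im\ip{s}{t}\bigr)=\int_0^\infty\bigl(e^{\im z\ip{s}{t}}-1\bigr)\frac{e^{-z}}{z}\,\mathrm{d}z
\]
for every $s\in\Rp^d$ and $t\in\R^d$. To prove the complex version directly, I would fix $s,t$ and set $a=1-\im\ip{s}{t}$. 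Both sides are holomorphic in $a$ on $\{\Re a>0\}$, where differentiation under the integral is justified by $|e^{-az}|\le e^{-z\Re a}$. The derivatives agree, since $\int_0^\infty e^{-az}\,\mathrm{d}z=1/a$, and both sides vanish at $a=1$. The identity therefore follows by uniqueness of primitives on this connected domain. A direct alternative is to differentiate in $\tau=\ip{s}{t}\in\R$.

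Next, I integrate this identity against $\nu(\mathrm{d}s)$ and use \eqref{eq:cf-Gd}. The left side becomes $\log\charf_X(t)$ and the right side becomes
\[
\int_{\Rp^d}\int_0^\infty \bigl(e^{\im\ip{t}{zs}}-1\bigr)\frac{e^{-z}}{z}\,\mathrm{d}z\,\nu(\mathrm{d}s).
\]
By the defining relation \eqref{eq:levy-multi-def}, applied first to nonnegative $g$ and then extended to complex $g$ that are $\Pi$-integrable by splitting into real and imaginary positive and negative parts, this equals $\int (e^{\im\ip{t}{y}}-1)\,\Pi(\mathrm{d}y)$. That gives \eqref{eq:LK-multi}. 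The point $s=0$ contributes nothing because the integrand vanishes there, so restricting to $\Rp^d\setminus\{0\}$ is harmless.

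The main obstacle is justifying Fubini and the $\Pi$-integrability. This reduces to proving $\int(1\wedge\|y\|)\,\Pi(\mathrm{d}y)<\infty$, because $|e^{\im\ip{t}{y}}-1|\le 2\wedge(\|t\|\|y\|)\le C_t(1\wedge\|y\|)$. By \eqref{eq:levy-multi-def} this quantity equals
\[
\int_{\Rp^d}h(\|s\|)\,\nu(\mathrm{d}s),\qquad h(r)=\int_0^\infty (1\wedge zr)\frac{e^{-z}}{z}\,\mathrm{d}z .
\]
Estimating $h$ is elementary. For $r>1$ we have $h(r)\le r\int_0^{1/r}e^{-z}\,\mathrm{d}z+\int_{1/r}^\infty e^{-z}/z\,\mathrm{d}z$, which is bounded by $1+\log r+C$. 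This is wrong growth for large $r$, so more care is needed there. For $r\le1$ we get $h(r)\le r\int_0^{1/r}e^{-z}\,\mathrm{d}z+\int_{1/r}^\infty e^{-z}/z\,\mathrm{d}z\le r+re^{-1/r}\le 2r$.

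So near $\|s\|\to0$, $h(\|s\|)\lesssim\|s\|$. This is integrable against $\nu$ on $\{\|s\|\le1\}$ because the $|\log\|s\||$ condition of Definition~\ref{def:thorin-measure} dominates $\|s\|$ there. For large $\|s\|$, $h(\|s\|)\sim\log\|s\|$, and this is \emph{not} controlled by the stated condition $\int_{\|s\|>1}\|s\|^{-1}\,\nu(\mathrm{d}s)<\infty$. This signals that the paper's Thorin conditions have their small and large roles swapped relative to the scale parametrization: with scales $s$, the correct conditions are $\int_{\|s\|\ge1}\log\|s\|\,\mathrm{d}\nu<\infty$ and $\int_{\|s\|<1}\|s\|\,\mathrm{d}\nu<\infty$. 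I would therefore state the estimate $h(r)\asymp r\wedge(1+\log r)$ and invoke the integrability of $\nu$ in the form needed, citing \cite{Bondesson1992}. I would also note that under \eqref{eq:nu-integrability} everything is trivially fine, since $h(r)\le r$ for all $r$ and hence $\int h(\|s\|)\,\mathrm{d}\nu\le\int\|s\|\,\mathrm{d}\nu<\infty$. With integrability in hand, Tonelli applies to $|g|$, Fubini holds, and the proof is complete.
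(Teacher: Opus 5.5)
Your argument follows the paper's proof: the Frullani identity at $w=1-\im\ip{s}{t}$, integration against $\nu$, exchange of integrals, and the pushforward definition of $\Pi$. The paper only asserts that the exchange is ``justified by the integrability of $\nu$'' and cites \cite{Bondesson1992} for $\int(1\wedge\|y\|)\,\Pi(\mathrm{d}y)<\infty$. You instead prove the complex Frullani identity and reduce everything to $\int h(\|s\|)\,\nu(\mathrm{d}s)<\infty$. Your diagnosis is correct. Since $s$ is a scale in \eqref{eq:cf-Gd}, the conditions of Definition~\ref{def:thorin-measure} are Bondesson's rate-parametrised conditions without the inversion $s\mapsto 1/s$, and they do not control $h(\|s\|)\sim\log\|s\|$ at infinity. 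For instance, with $d=1$ and $\nu=\sum_{k\geq 1}\delta_{2^{k}}$ the conditions hold, yet $\sum_{k}h(2^{k})=\infty$. So the paper's justification of the Fubini step is, as written, insufficient.

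What you should change is how you repair this. Replacing the hypotheses by the corrected Thorin conditions proves a different statement. So does borrowing \eqref{eq:nu-integrability}, which is an assumption of Theorem~\ref{lem:stein-ggc}, not of this proposition. Neither is needed, because the missing condition is contained in the hypothesis $X\sim\mathcal{G}_{d}(\nu)$. The real part of the integrand in \eqref{eq:cf-Gd} is $-\tfrac12\log\bigl(1+\ip{s}{t}^{2}\bigr)\le 0$. For \eqref{eq:cf-Gd} to define a finite $\log\charf_{X}(t)$ at $t=(1,\dots,1)$, one therefore needs $\int\log\bigl(1+\ip{s}{t}^{2}\bigr)\,\nu(\mathrm{d}s)<\infty$ there. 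Since $\ip{s}{(1,\dots,1)}\geq\|s\|$ on $\Rp^{d}$, this gives $\int_{\{\|s\|>1\}}\log\|s\|\,\nu(\mathrm{d}s)<\infty$. If one tolerated the value $-\infty$, the inequality $\log(1+\epsilon^{2}x)\geq\epsilon^{2}\log(1+x)$ for $0<\epsilon\leq 1$ would make the integral diverge at every $t=\epsilon(1,\dots,1)$. Then $\charf_{X}$ would vanish on a ray issuing from $0$, contradicting continuity and $\charf_{X}(0)=1$; in particular, no such $X$ exists for the counterexample above. With this, your bound on $h$ proves the proposition as stated.

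Two small repairs remain. First, $r\wedge(1+\log r)$ is negative for $r<1/e$, so write $h(r)\asymp\log(1+r)$ instead. Second, $\bigl|\log\|s\|\bigr|$ does not dominate $\|s\|$ near $\|s\|=1$, so on $\{1/2\leq\|s\|\leq 1\}$ use the local finiteness of the Radon measure $\nu$.
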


\smallskip
 
\begin{proof}
 
The classical Frullani-type identity states that, for any complex
number $w$ with $\Re(w) > 0$,
\[
  -\log w \;=\; \int_{0}^{\infty}\!\bigl(e^{-z w} - e^{-z}\bigr)\,\frac{\mathrm{d}z}{z}.
\]
Applied to $w = 1 - \im\,\ip{s}{t}$ (which has positive real part),
\[
  -\log(1 - \im\,\ip{s}{t})
  \;=\; \int_{0}^{\infty}\!\bigl(e^{\im\, z\,\ip{s}{t}} - 1\bigr)\,\frac{e^{-z}}{z}\,\mathrm{d}z.
\]
Substituting in~\eqref{eq:cf-Gd}, exchanging the integrals (justified by
the integrability of $\nu$ and Fubini--Tonelli) and using the
pushforward definition~\eqref{eq:levy-multi-def} as a change of
variable,
\begin{align*}
  \log\charf_{X}(t)
  &\;=\; \int_{\Rp^{d}}\!\int_{0}^{\infty}\!\bigl(e^{\im\, z\,\ip{s}{t}} - 1\bigr)\,\frac{e^{-z}}{z}\,\mathrm{d}z\,\nu(\mathrm{d}s) \\
  &\;=\; \int_{\Rp^{d}\setminus\{0\}}\!\!\bigl(e^{\im\,\ip{t}{y}} - 1\bigr)\,\Pi(\mathrm{d}y),
\end{align*}
which is~\eqref{eq:LK-multi} and identifies $\Pi$ as the L\'evy measure
of $X$. 
\end{proof}

\begin{theorem}[L\'evy form of the Stein operator]
\label{prop:stein-levy-form}
Under the assumptions of Theorem~\ref{lem:stein-ggc}, the Stein
operator~\eqref{eq:stein-ggc-integral} can be rewritten as
\begin{equation}
  (\mathcal{T}_{\nu} f)(x)
  \;=\; \ip{x}{f(x)}
       \,-\, \int_{\Rp^{d}\setminus\{0\}}\!\!\ip{y}{f(x + y)}\,\Pi(\mathrm{d}y),
  \qquad x \in \R^{d},
  \label{eq:stein-ggc-levy}
\end{equation}
\end{theorem}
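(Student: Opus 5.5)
The plan is to start from the integral term in \eqref{eq:stein-ggc-integral} and apply the pushforward definition \eqref{eq:levy-multi-def} of $\Pi$ with $x$ held fixed. The test function is chosen so that the weight $e^{-z}/z$ absorbs the factor $z$ generated by $\ip{y}{\cdot}$ at $y = zs$. Concretely, fix $x \in \R^{d}$ and $f \in \mathcal{F}_{\nu}$, and set
\[
  g_{x}(y) \;\coloneqq\; \ip{y}{f(x+y)}, \qquad y \in \Rp^{d}\setminus\{0\}.
\]
Then $g_{x}(zs) = z\,\ip{s}{f(x+zs)}$, so that
\[
  \int_{\Rp^{d}}\!\int_{0}^{\infty}\! g_{x}(zs)\,\frac{e^{-z}}{z}\,\mathrm{d}z\,\nu(\mathrm{d}s)
  \;=\; \int_{\Rp^{d}}\!\int_{0}^{\infty}\! e^{-z}\,\ip{s}{f(x+zs)}\,\mathrm{d}z\,\nu(\mathrm{d}s).
\]
The left side equals $\int g_{x}\,\mathrm{d}\Pi$ by \eqref{eq:levy-multi-def}. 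Substituting into \eqref{eq:stein-ggc-integral} gives \eqref{eq:stein-ggc-levy}.

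The substantive step is to extend \eqref{eq:levy-multi-def} from nonnegative Borel $g$ to the signed, vector-paired function $g_{x}$. I would decompose $g_{x} = \sum_{\ell}\bigl(y_{\ell} f_{\ell}(x+y)\bigr)$ and split each term into positive and negative parts. Each part is nonnegative and Borel, so \eqref{eq:levy-multi-def} applies to it separately. Recombining is legitimate provided at least one side is absolutely integrable, i.e.
\[
  \int_{\Rp^{d}\setminus\{0\}} \|y\|\,\bigl\|f(x+y)\bigr\|\,\Pi(\mathrm{d}y) \;<\;\infty .
\]
Applying \eqref{eq:levy-multi-def} to $|g_{x}|$ bounds this integral by $\int\!\int e^{-z}\,\|s\|\,\|f(x+zs)\|\,\mathrm{d}z\,\nu(\mathrm{d}s)$.

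For bounded $f$ (the admissible class noted in the remark after Theorem~\ref{lem:stein-ggc}), the previous bound is at most $\|f\|_{\infty}\int\|s\|\,\nu(\mathrm{d}s)$, which is finite by \eqref{eq:nu-integrability}. For a general $f\in\mathcal{F}_{\nu}$, I would take the finiteness of this double integral as part of the meaning of ``the Stein identity is well defined''. This integrability bookkeeping is the only delicate point; everything else is a change of variables. I would also note that $\Pi$ does not charge $0$: points with $s=0$ or $z=0$ map to $0$, and $g_{x}(0)=0$, so restricting to $\Rp^{d}\setminus\{0\}$ loses nothing.

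Finally, as a consistency check, I would remark that the identity $\E[\ip{X}{f(X)}] = \E\bigl[\int \ip{y}{f(X+y)}\,\Pi(\mathrm{d}y)\bigr]$ is the classical Mecke-type (compensation) formula for infinitely divisible laws with finite first moment. This matches Proposition~\ref{prop:levy-measure-multi}, which identifies $\Pi$ as the L\'evy measure of $X$, but the proof itself needs only the pushforward computation above.
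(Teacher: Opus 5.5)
Your proposal is correct and follows the paper's proof: rewrite $e^{-z}\ip{s}{f(x+zs)}$ as $\frac{e^{-z}}{z}\ip{zs}{f(x+zs)}$, then apply the pushforward definition \eqref{eq:levy-multi-def} to $g(y)=\ip{y}{f(x+y)}$. Your positive/negative-part argument, together with the bound $\int\|y\|\,\|f(x+y)\|\,\Pi(\mathrm{d}y)\le \|f\|_{\infty}\int\|s\|\,\nu(\mathrm{d}s)$, is more careful than the paper, which simply calls this $g$ ``bounded Borel-measurable'' even though it is neither bounded nor nonnegative.
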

where $\Pi$ is the Lévy measure defined by \eqref{eq:levy-multi-def}

\smallskip

\begin{proof}
    
Let $z > 0$, $\ip{s}{f(x + z s)} = \frac{1}{z}\,\ip{z s}{f(x + z s)}$, hence
\[
  e^{-z}\,\ip{s}{f(x + z s)}
  \;=\; \frac{e^{-z}}{z}\,\ip{z s}{f(x + z s)}.
\]
Therefore,
\begin{align*}
  \int_{\Rp^{d}}\!\int_{0}^{\infty}\! e^{-z}\,\ip{s}{f(x + z s)}\,\mathrm{d}z\,\nu(\mathrm{d}s)
  &\;=\; \int_{\Rp^{d}}\!\int_{0}^{\infty}\!\ip{z s}{f(x + z s)}\,\frac{e^{-z}}{z}\,\mathrm{d}z\,\nu(\mathrm{d}s) \\
  &\;=\; \int_{\Rp^{d}\setminus\{0\}}\!\ip{y}{f(x + y)}\,\Pi(\mathrm{d}y),
\end{align*}
the last equality being~\eqref{eq:levy-multi-def} applied to the bounded Borel-measurable function
$g(y) = \ip{y}{f(x + y)}$. Substituting in~\eqref{eq:stein-ggc-integral}
yields~\eqref{eq:stein-ggc-levy}. 
\end{proof}

% ---------------------------------------------------------------------
\subsection{Discrete case: equivalence of the two forms}
\label{sec:gdn-equivalence}
% ---------------------------------------------------------------------
 
For a finite gamma convolution $X \sim \mathcal{G}_{d,n}(\alpha, s)$,
the Thorin measure is purely atomic, $\nu = \sum_{j=1}^{n} \alpha_{j}\,\delta_{s_{j}}$,
and the characteristic function is the product
$\charf_{X}(t) = \prod_{j=1}^{n}(1 - \im\,\ip{s_{j}}{t})^{-\alpha_{j}}$.
Both constructions of the previous sections are then well-defined on
the Schwartz class, and we now show that they yield the same
operator.
 
\paragraph{Construction $1$ (multivariate Arras et al.\ form).}
Multiplying $\nabla_{t}\charf_{X}(t)$ by $\prod_{j=1}^{n}(1 - \im\,\ip{s_{j}}{t})$
turns the partial differential equation satisfied by $\charf_{X}$ into
the polynomial form~\eqref{eq:cf-pde} with
\begin{equation}
  A(t) \;=\; \prod_{j=1}^{n} (1 - \im\,\ip{s_{j}}{t}),
  \qquad
  B(t) \;=\; \sum_{j=1}^{n} \alpha_{j}\, s_{j}\,\prod_{k \neq j}(1 - \im\,\ip{s_{k}}{t}).
  \label{eq:AB-discrete}
\end{equation}
By Proposition~\ref{lem:arras-multi} the Stein operator is:
\begin{align}
  (\mathcal{T}^{\mathrm{Arras}}_{} f)(x)
  &\;=\; \Bigl\langle x \,-\, {\textstyle\sum_{j=1}^{n}}\,\alpha_{j}\, s_{j},\, f(x)\Bigr\rangle \notag\\
  &\quad +\, \sum_{\ell=1}^{n-1}(-1)^{\ell}\!\left[
       \Bigl\langle x - {\textstyle\sum_{j=1}^{n}}\,\alpha_{j}\, s_{j},\, e_{\ell}(D^{*}) f(x)\Bigr\rangle
       - \sum_{j=1}^{n}\!\bigl\langle\alpha_{j}\, s_{j},\, (e_{\ell}(D^{*}) - e_{\ell}((D^{*})_{j}))f(x)\bigr\rangle
   \right]\notag\\
  &\quad +\, (-1)^{n}\!\Bigl\langle x,\, {\textstyle\prod_{j=1}^{n}}\, D_{j}^{*}\, f(x)\Bigr\rangle,
  \label{eq:stein-arras-discrete}
\end{align}
where $D_{j}^{*} = \ip{s_{j}}{\nabla_{x}}$ is the directional derivative
in direction $s_{j}$, $e_{\ell}$ is the elementary symmetric polynomial
of degree $\ell$, and $e_{\ell}((D^{*})_{j})$ denotes the same polynomial
evaluated at $(D_{1}^{*},\dots,D_{n}^{*})$ with the $j$-th entry
omitted.
 
\paragraph{Construction $2$ (GGC integral form).}
Applying Theorem~\ref{lem:stein-ggc} to the atomic Thorin measure
$\nu = \sum_{j=1}^{n} \alpha_{j}\,\delta_{s_{j}}$
gives the Stein operator
\begin{equation}
  (\mathcal{T}^{\mathrm{GGC}}_{} f)(x)
  \;=\; \ip{x}{f(x)} \,-\, \sum_{j=1}^{n}\alpha_{j}\!\int_{0}^{\infty}\! e^{-z}\,\ip{s_{j}}{f(x + z s_{j})}\,\mathrm{d}z.
  \label{eq:stein-ggc-discrete}
\end{equation}
 
The following proposition shows that these two operators coincide.
 
\begin{proposition}[Equivalence of the Arras et al.\ and GGC operators on $\mathcal{G}_{d,n}$]
\label{prop:equivalence-gdn}
For every $X \sim \mathcal{G}_{d,n}(\alpha, s)$ and every
$f \in \Sclass(\R^{d}; \R^{d})$,
$$
  (\mathcal{T}^{\mathrm{Arras}}_{} f)(x)
  \;=\; (\mathcal{T}^{\mathrm{GGC}}_{} f)(x),
  \qquad x \in \R^{d}.
$$

\end{proposition}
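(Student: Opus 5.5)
The plan is to compare the two operators at the level of Fourier multipliers, where both become explicit. Directional derivatives then act as multiplication operators, and the identity reduces to an algebraic identity between rational functions of the variables $\ip{s_{j}}{u}$. Fix $f\in\Sclass(\R^{d};\R^{d})$ and write $D_{j}^{*}=\ip{s_{j}}{\nabla_{x}}$ as in \eqref{eq:stein-arras-discrete}.

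\textbf{Step 1: the GGC operator as a resolvent.} For each atom, use the Fourier inversion already used in the proof of Theorem~\ref{lem:stein-ggc}. Translation $f\mapsto f(\cdot+zs_{j})$ multiplies $\widetilde f(u)$ by $e^{-\im z\ip{s_{j}}{u}}$. Integrating against $e^{-z}\,\mathrm{d}z$ (legitimate by Fubini, since $f$ is Schwartz) gives the multiplier $(1+\im\ip{s_{j}}{u})^{-1}$. Since the symbol of $D_{j}^{*}$ is $-\im\ip{s_{j}}{u}$, this multiplier is that of $(I-D_{j}^{*})^{-1}$, and so
\[
\int_{0}^{\infty}e^{-z}f(x+zs_{j})\,\mathrm{d}z=\bigl((I-D_{j}^{*})^{-1}f\bigr)(x).
\]
Consequently \eqref{eq:stein-ggc-discrete} reads $\ip{x}{f}-\sum_{j}\alpha_{j}\ip{s_{j}}{(I-D_{j}^{*})^{-1}f}$.

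\textbf{Step 2: factor the Arras operator.} The polynomials in \eqref{eq:AB-discrete} give $A(D)=\prod_{j}(I-D_{j}^{*})$ and $B(D)=\sum_{j}\alpha_{j}s_{j}\prod_{k\ne j}(I-D_{k}^{*})$, up to the sign convention $t\leftrightarrow \im u$ fixed in Proposition~\ref{lem:arras-multi}. Expanding these products with elementary symmetric polynomials $e_{\ell}(D^{*})$ and $e_{\ell}((D^{*})_{j})$ should reproduce \eqref{eq:stein-arras-discrete} term by term. I will check this first, because it pins down every sign. Then use $\prod_{k\neq j}(I-D_{k}^{*})=A(D)(I-D_{j}^{*})^{-1}$, valid because all $D_{k}^{*}$ commute and each $I-D_{k}^{*}$ is invertible on $\Sclass$ (its symbol $1+\im\ip{s_{k}}{u}$ never vanishes). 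This rewrites the Arras operator as $\ip{x}{A(D)f}-\sum_{j}\alpha_{j}\ip{s_{j}}{(I-D_{j}^{*})^{-1}A(D)f}$.

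\textbf{Step 3: compare, and where I expect trouble.} Step 2 shows that the $B$-part of the Arras operator is exactly the integral part of $\mathcal{T}^{\mathrm{GGC}}$ applied to the \emph{same} multiplier $A(D)$ that multiplies $x$. Equality with $\mathcal{T}^{\mathrm{GGC}}f$ for the same $f$ therefore reduces to reconciling the term $\ip{x}{A(D)f}$ with $\ip{x}{f}$. This is the main obstacle. Multiplication by $x$ does not commute with $A(D)$: the commutator produces the lower-order $\nabla_{u}$-terms, i.e.\ the $e_{\ell}$-corrections in \eqref{eq:stein-arras-discrete}. I would first try to show, on the Fourier side, that these corrections collapse against the $B$-terms. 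The tool would be the PDE $A\nabla\charf_{X}=\im B\charf_{X}$ together with the partial-fraction identity
\[
\frac{B(\im u)}{A(\im u)}=\sum_{j}\frac{\alpha_{j}s_{j}}{1+\im\ip{s_{j}}{u}}.
\]
If the collapse fails pointwise, the identity can only hold after the identification $f\leftrightarrow A(D)f$, i.e.\ as a statement about the operators modulo composition with the invertible multiplier $A(D)$ on $\Sclass$. I would then record precisely which reading of the proposition the computation supports.
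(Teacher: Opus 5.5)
\textbf{The gap is in Step 3, and it cannot be closed: the pointwise identity asserted in the proposition is false.} So no cancellation of correction terms against the $B$-terms will occur. The PDE $A\nabla\charf_X=\im B\charf_X$ could not produce one anyway: it only constrains integrals against the law of $X$, and the partial-fraction identity is already fully used in your Step 2. Note also that after Step 2 the integral parts differ as well. Equality for all $f$ is therefore equivalent to $\mathcal{T}^{\mathrm{GGC}}\bigl((A(D)-I)f\bigr)=0$ for all $f$, not merely to reconciling the $x$-terms.

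Locality settles the question. $\mathcal{T}^{\mathrm{Arras}}$ is a differential operator, so $(\mathcal{T}^{\mathrm{Arras}}f)(x_0)=0$ whenever $f$ vanishes near $x_0$. Now take $f=\phi\,(1,\dots,1)^{\top}$, with $s_1\neq 0$ and $\phi\geq 0$ smooth, supported in a small ball around $x_0+s_1$ that stays away from $x_0$, and positive at $x_0+s_1$. Then $(\mathcal{T}^{\mathrm{GGC}}f)(x_0)=-\sum_j\alpha_j\int_0^\infty e^{-z}\ip{s_j}{f(x_0+zs_j)}\,\mathrm{d}z<0$.

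Already for $d=n=1$, $(\mathcal{T}^{\mathrm{Arras}}f)(x)=(x-\alpha s)f(x)-sxf'(x)$ is the classical gamma Stein operator. At $x=0$ it gives $-\alpha s f(0)$, while $\mathcal{T}^{\mathrm{GGC}}$ gives $-\alpha s\int_0^\infty e^{-z}f(zs)\,\mathrm{d}z$, and these differ for $f(x)=e^{-x^2}$.

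So your fallback reading is the only correct one, and your Steps 1--2, which are right, already prove it:
\[
\mathcal{T}^{\mathrm{Arras}}f=\mathcal{T}^{\mathrm{GGC}}\bigl(A(D)f\bigr),\qquad A(D)=\prod_{j=1}^{n}(I-D_j^{*}).
\]
Here $A(D)$ is a bijection of $\Sclass(\R^d;\R^d)$: its symbol $\prod_j(1+\im\ip{s_j}{u})$ never vanishes, and its reciprocal is smooth with bounded derivatives. Hence $\mathcal{T}^{\mathrm{Arras}}(\Sclass)=\mathcal{T}^{\mathrm{GGC}}(\Sclass)$ and the two Stein identities are equivalent. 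The operators themselves are not equal, and neither are the Stein kernels and KSDs built from them.

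\textbf{The paper's appendix proof does not establish the literal statement either.} It derives the resolvent form by Taylor-expanding $f$ and summing $\sum_m(D_j^*)^m$. That is only formal for Schwartz functions, which need not be analytic; your Fourier-multiplier Step 1 is the rigorous replacement. It then applies $\prod_j(I-D_j^*)$ to both sides and identifies the result in \eqref{eq:proof-eq-2} with $\ip{x}{A(D)f}-\ip{B(D)}{f}$. This has two problems:
\begin{itemize}
\item It drops the product-rule terms $D_j^*\ip{x}{g}=\ip{s_j}{g}+\ip{x}{D_j^*g}$; for $d=n=1$ the two sides of \eqref{eq:proof-eq-2} differ by $-sf$.
\item Even if that step were correct, it would give $A(D)\circ\mathcal{T}^{\mathrm{GGC}}=\mathcal{T}^{\mathrm{Arras}}$, not equality.
\end{itemize}
Your Step 2, with $A(D)$ composed on the right, is the correct version of that step.

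\textbf{Expect a sign mismatch in the term-by-term check you planned.} Expanding $\ip{x}{A(D)f}-\ip{B(D)}{f}$ gives, at each order $1\le\ell\le n-1$, the bracket
\[
\ip{x-\textstyle\sum_j\alpha_j s_j}{e_\ell(D^*)f}+\textstyle\sum_j\ip{\alpha_j s_j}{(e_\ell(D^*)-e_\ell((D^*)_j))f}.
\]
This has a plus where \eqref{eq:stein-arras-discrete} has a minus. For $n=2$, the displayed formula differs from the true operator by $2\sum_j\alpha_j\ip{s_j}{D_j^*f}$, which in general has non-zero mean under $X$. Trust your factorised form over that display.
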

 
\begin{proof}
See Appendix~\ref{app:proof-equivalence-gdn}.
\end{proof}
 
\begin{remark}[Why the equivalence matters]
\label{rem:equivalence-meaning}
Proposition~\ref{prop:equivalence-gdn} establishes that, on the
discrete sub-class $\mathcal{G}_{d,n}$, the two independent
constructions of Sections~\ref{sec:arras-multi-lemma}
and~\ref{sec:stein-ggc-integral} produce the same differential
expression. As $\nu$ ranges from finitely atomic to absolutely
continuous measures, the formal differential
operator~\eqref{eq:stein-arras-discrete} -- of unbounded order in $n$
and generally ill-defined as an actual differential operator when
$\nu$ is diffuse -- ceases to make sense, while the integral
operator~\eqref{eq:stein-ggc-integral} remains well-defined. From the
perspective of parameter estimation in Section~\ref{sec:ksd-def}, this
means that the KSD built on $\mathcal{T}^{\mathrm{GGC}}_{}$ remains
computable throughout the closure $\mathcal{G}_{d}$ of
$\bigcup_{n \geq 1}\mathcal{G}_{d,n}$, whereas the
operator~\eqref{eq:stein-arras-discrete} only makes sense on the
discrete sub-class $\mathcal{G}_{d,n}$.
\end{remark}

% =====================================================================
\section{Kernel Stein Discrepancy for MGGC}
\label{sec:ksd-ggc}
% =====================================================================

We are now in a position to combine the Stein operator of
Section~\ref{sec:stein-ggc} with the kernel framework of
Section~\ref{sec:background-rkhs-kme} to obtain a Kernel Stein
Discrepancy tailored to the multivariate MGGC class. Section~\ref{sec:ksd-choice-operator}
discusses which of the two operators of Section~\ref{sec:stein-ggc} is
retained for the construction of the KSD, Section~\ref{sec:ggc-stein-kernel}
derives the corresponding Stein kernel in closed form, and
Section~\ref{sec:ksd-ggc-convergence} states the two convergence
guarantees -- separation and weak convergence -- that justify the use
of the KSD as a discrepancy measure on $\mathcal{G}_{d}$.

% ---------------------------------------------------------------------
\subsection{Choice of Stein operator}
\label{sec:ksd-choice-operator}
% ---------------------------------------------------------------------

Section~\ref{sec:stein-ggc} provides two Stein operators for
$X \sim \mathcal{G}_{d}(\nu)$: the multivariate Arras
form~\eqref{eq:stein-arras-multi}, with polynomial differential
coefficients, and the MGGC integral form~\eqref{eq:stein-ggc-integral},
with coefficients given by integrals against the Thorin measure $\nu$.

\paragraph{Why not the polynomial form.}
Building a Stein kernel from~\eqref{eq:stein-arras-multi} requires
applying the polynomial differential operator
$\mathcal{T}^{\mathrm{Arras}}_{}$ once on each variable of the
underlying matrix-valued kernel $K(x,y)$. On the discrete sub-class
$\mathcal{G}_{d,n}$, the polynomial $A(t) = \prod_{j=1}^{n}(1 - \im\,\ip{s_{j}}{t})$
has degree $n$; hence the differential operator $A(D)$ is of order
$n$, and the Stein kernel
$\mathcal{T}^{\mathrm{Arras},(x)}_{}\,\mathcal{T}^{\mathrm{Arras},(y)}_{}\, K(x,y)$
involves \emph{all} mixed partial derivatives of $K$ of order up to
$2n$. The number of such partial derivatives in dimension $d$ grows
combinatorially as $\binom{2n + d}{d}$, so that even for moderate
values of $n$ and $d$ -- say $n = 10$ and $d = 5$ -- the evaluation
of the Stein kernel at a single point requires several thousand
distinct partial-derivative calls.

\paragraph{Why the integral form.}
The exponential-jump form~\eqref{eq:stein-ggc-subordinator} avoids the computational cost of evaluating all the derivatives
of $K$. As we shall see in
Section~\ref{sec:ggc-stein-kernel}, applying
$\mathcal{T}^{(x)}_{\nu}\,\mathcal{T}^{(y)}_{\nu}$  to an isotropic kernel
$K(x,y) = k(x,y)\, I_{d}$ produces a closed-form expression involving
only evaluations of $k$ at shifted points and one-dimensional
expectations against $\mathrm{Exp}(1)$. These quantities are uniformly
cheap to compute -- by quadrature or by Monte-Carlo simulation of $E$ -- and
the cost does not depend on the regularity of $\nu$. The integral form
therefore makes the KSD computable at a uniform cost across the entire
class $\mathcal{G}_{d}$, including the absolutely-continuous regime
where the polynomial form is meaningless.

% ---------------------------------------------------------------------
\subsection{Stein kernel for the MGGC class}
\label{sec:ggc-stein-kernel}
% ---------------------------------------------------------------------

We rewrite Definition~\ref{def:stein-kernel} using the operator
$\mathcal{T}_{\nu}$ of equation \eqref{eq:stein-ggc-subordinator} to an isotropic
matrix-valued reproducing kernel $K(x,y) = k(x,y)\, I_{d}$, where $k$
is a scalar reproducing kernel on $\R^{d}$.

\begin{theorem}[Stein kernel for $\mathcal{G}_{d}(\nu)$]
\label{prop:ggc-stein-kernel}
Let $X \sim \mathcal{G}_{d}(\nu)$ with $\nu$ satisfying
condition~\eqref{eq:nu-integrability}, and let $K(x,y) = k(x,y)\, I_{d}$
with $k$ a scalar reproducing kernel on $\R^{d}$. Let $E, E' \sim \mathrm{Exp}(1)$
be independent and independent of $X$. Then the Stein
kernel of Definition~\ref{def:stein-kernel} writes, for every $x, y \in \R^{d}$,
\begin{equation}
\begin{aligned}
  K_{\nu}(x,y) = \mathcal{T}_\nu^{(x)}\mathcal{T}_\nu^{(y)}K(x,y)
  \;&=\; \ip{x}{y}\, k(x,y) \\
       &\quad -\, \int_{\Rp^{d}}\!\ip{x}{s}\,\E\bigl[k(x,\, y + s\, E)\bigr]\,\nu(\mathrm{d}s) \\
       &\quad -\, \int_{\Rp^{d}}\!\ip{s'}{y}\,\E\bigl[k(x + s'\, E',\, y)\bigr]\,\nu(\mathrm{d}s') \\
       &\quad +\, \int_{\Rp^{d}}\!\int_{\Rp^{d}}\!\ip{s'}{s}\,\E\bigl[k(x + s'\, E',\, y + s\, E)\bigr]\,\nu(\mathrm{d}s)\,\nu(\mathrm{d}s').
\end{aligned}
  \label{eq:ggc-stein-kernel}
\end{equation}
For a finite Gamma convolution $X \sim \mathcal{G}_{d,n}(\alpha, s)$
with $\nu = \sum_{j=1}^{n}\alpha_{j}\,\delta_{s_{j}}$,
formula~\eqref{eq:ggc-stein-kernel} simplifies to
\begin{equation}
\begin{aligned}
  K_{\nu}(x,y)
  \;&=\; \ip{x}{y}\, k(x,y)
       - \sum_{j=1}^{n}\alpha_{j}\,\ip{x}{s_{j}}\,\E\bigl[k(x,\, y + s_{j} E)\bigr] \\
       &\quad - \sum_{j=1}^{n}\alpha_{j}\,\ip{s_{j}}{y}\,\E\bigl[k(x + s_{j} E',\, y)\bigr]
       + \sum_{j,j'=1}^{n}\alpha_{j}\,\alpha_{j'}\,\ip{s_{j'}}{s_{j}}\,\E\bigl[k(x + s_{j'} E',\, y + s_{j} E)\bigr].
\end{aligned}
  \label{eq:ggc-stein-kernel-discrete}
\end{equation}
\end{theorem}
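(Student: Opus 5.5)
The plan is a direct computation: apply $\mathcal{T}_\nu$ of \eqref{eq:stein-ggc-subordinator} successively in each variable of $K(x,y)=k(x,y)I_d$. The operator acts on vector fields, so the rule of Definition~\ref{def:stein-kernel} has to be made precise first. I would use the standard KSD convention: $\mathcal{T}^{(y)}_\nu$ acts on the columns $y\mapsto K(x,y)e_i=k(x,y)e_i$. This produces a vector in $\R^d$ indexed by $i$, which is then paired with the action of $\mathcal{T}^{(x)}_\nu$ in the $x$ variable.

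Equivalently, $K_\nu(x,y)=\sum_{i}\mathcal{T}^{(x)}_{\nu,i}\mathcal{T}^{(y)}_{\nu,i}k(x,y)$, where each coordinate operator is
\[
\mathcal{T}_{\nu,i}g(x)=x_i\,g(x)-\int_{\Rp^d}s_i\,\E[g(x+sE)]\,\nu(\mathrm{d}s).
\]
This coordinate form is convenient because $\mathcal{T}_\nu f=\sum_i \mathcal{T}_{\nu,i}f_i$ for $f=(f_1,\dots,f_d)$.

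\textbf{Step 1.} Fix $x$ and apply $\mathcal{T}^{(y)}_{\nu,i}$ to $y\mapsto k(x,y)$. This gives
\[
h_i(x,y)=y_i\,k(x,y)-\int s_i\,\E[k(x,y+sE)]\,\nu(\mathrm{d}s).
\]

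\textbf{Step 2.} Fix $y$ and apply $\mathcal{T}^{(x)}_{\nu,i}$ to $x\mapsto h_i(x,y)$, using an independent copy $E'$ for the shift $x\mapsto x+s'E'$. Expanding the product of the two binomials gives four terms:
\begin{itemize}
\item $x_iy_i\,k(x,y)$;
\item $-x_i\int s_i\,\E[k(x,y+sE)]\,\nu(\mathrm{d}s)$;
\item $-\int s'_i\,y_i\,\E[k(x+s'E',y)]\,\nu(\mathrm{d}s')$;
\item $+\iint s'_is_i\,\E[k(x+s'E',y+sE)]\,\nu(\mathrm{d}s)\,\nu(\mathrm{d}s')$.
\end{itemize}
The shift in $x$ does not affect the multiplier $y_i$ in the third term, and the coefficient $s_i$ in the fourth term is constant in $x$. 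Summing over $i$ turns the products into inner products $\ip{x}{y}$, $\ip{x}{s}$, $\ip{s'}{y}$ and $\ip{s'}{s}$, which yields \eqref{eq:ggc-stein-kernel}. The discrete formula \eqref{eq:ggc-stein-kernel-discrete} then follows by substituting $\nu=\sum_j\alpha_j\delta_{s_j}$: single integrals become sums weighted by $\alpha_j$, and the double integral becomes a double sum weighted by $\alpha_j\alpha_{j'}$.

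\textbf{Main obstacle.} The delicate point is justifying that these manipulations are legitimate. Two things are needed.
\begin{itemize}
\item Every expectation and $\nu$-integral must be finite, so that $\mathcal{T}^{(x)}_\nu$ can be applied term by term to $h_i$, and the $E$, $E'$ expectations, the $\nu$-integrals and the outer sum can be interchanged by Fubini.
\item The functions $y\mapsto K(\cdot,y)e_i$ must lie in $\mathcal{F}_\nu$.
\end{itemize}
For the first point I would assume $k$ is bounded and measurable; the paper's remark on admissible test functions suggests exactly this setting. Then $|k(x,y+sE)|\le\|k\|_\infty$, and
\[
\int\|s\|\,\E|k|\,\nu(\mathrm{d}s)\le\|k\|_\infty\int\|s\|\,\nu(\mathrm{d}s)<\infty
\]
by \eqref{eq:nu-integrability}. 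The double term is bounded by $\|k\|_\infty\bigl(\int\|s\|\,\nu(\mathrm{d}s)\bigr)^2$ in the same way.

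The remaining difficulty is that $h_i$ contains the unbounded factor $y_i$. I would treat this by noting that, for fixed $y$, $x\mapsto h_i(x,y)$ is still bounded and measurable in $x$, so it is an admissible test function in the sense of the remark following Theorem~\ref{lem:stein-ggc}. The growth in $y$ only enters the final kernel polynomially, through the factors $\ip{x}{y}$ and $\ip{s'}{y}$, and never under the integral against $\nu$.
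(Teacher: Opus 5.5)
Your proposal is correct and follows the paper's proof. The paper applies $\mathcal{T}^{(y)}_\nu$ to the columns $y\mapsto k(x,y)e_j$ and stacks them into the vector field $\Phi(x,y)$, whose $i$-th entry is exactly your $h_i(x,y)$; it then applies $\mathcal{T}^{(x)}_\nu$ to $x\mapsto\Phi(x,y)$ with an independent $E'$, which is your $\sum_i\mathcal{T}^{(x)}_{\nu,i}h_i$, and expands into the same four terms. Your integrability discussion (bounded $k$ together with $\int\|s\|\,\nu(\mathrm{d}s)<\infty$ to justify Fubini) adds rigour the paper leaves implicit, but membership of the columns in $\mathcal{F}_\nu$ is not needed for the formula itself, only for the later zero-mean property of the Stein kernel.
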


\begin{proof}
The proof is a direct computation. We compute first $\mathcal{T}^{(y)}_{\nu} K(x,y)$,  then apply $\mathcal{T}^{(x)}_{\nu}$ to the
resulting vector field.

\smallskip\noindent\textbf{Step 1: action of $\mathcal{T}^{(y)}_{\nu}$.}
Fix $x \in \R^{d}$ and let $j \in \{1,\dots,d\}$. The $j$-th column
of $K(\cdot, \cdot)$, viewed as a vector field in $y$, is the function
$y \mapsto k(x,y)\, e_{j}$ where $e_{j}$ denotes the $j$-th canonical
basis vector. Applying $\mathcal{T}_{\nu}$ in its
form~\eqref{eq:stein-ggc-subordinator} to this vector field gives
\begin{align*}
  \mathcal{T}^{(y)}_{\nu}\!\bigl(k(x,y)\, e_{j}\bigr)
  &\;=\; \ip{y}{e_{j}}\,k(x,y) \,-\, \int_{\Rp^{d}}\!\E\bigl[\ip{s}{e_{j}}\,k(x,\,y + s\,E)\bigr]\,\nu(\mathrm{d}s) \\
  &\;=\; y_{j}\,k(x,y) \,-\, \int_{\Rp^{d}}\! s_{j}\,\E\bigl[k(x,\,y + s\,E)\bigr]\,\nu(\mathrm{d}s).
\end{align*}
Stacking the $d$ columns yields the vector identity
\begin{equation}
  \mathcal{T}^{(y)}_{\nu} K(x,y)
  \;=\; y\,k(x,y) \,-\, \int_{\Rp^{d}}\! s\,\E\bigl[k(x,\,y + s\,E)\bigr]\,\nu(\mathrm{d}s)
  \;\eqqcolon\; \Phi(x,y) \,\in\, \R^{d}.
  \label{eq:proof-Phi}
\end{equation}

\smallskip\noindent\textbf{Step 2: action of $\mathcal{T}^{(x)}_{\nu}$ on $\Phi$.}
Apply $\mathcal{T}_{\nu}$ in the form~\eqref{eq:stein-ggc-subordinator} to the
vector field $x \mapsto \Phi(x, y)$:
\begin{equation}
  K_{\nu}(x,y)
  \;=\; \mathcal{T}^{(x)}_{\nu}\,\Phi(x,y)
  \;=\; \ip{x}{\Phi(x,y)}
       \,-\, \int_{\Rp^{d}}\!\E_{E'}\!\bigl[\ip{s'}{\Phi(x + s'\,E',\, y)}\bigr]\,\nu(\mathrm{d}s'),
  \label{eq:proof-final-stein-kernel}
\end{equation}
where $E' \sim \mathrm{Exp}(1)$ is independent of the variable $E$
appearing in the definition of $\Phi$. From~\eqref{eq:proof-Phi},
\begin{align*}
  \ip{x}{\Phi(x,y)}
  &\;=\; \ip{x}{y}\, k(x,y)
       \,-\, \int_{\Rp^{d}}\!\ip{x}{s}\,\E\bigl[k(x,\,y + s\,E)\bigr]\,\nu(\mathrm{d}s),\\
  \ip{s'}{\Phi(x + s'\,E',\, y)}
  &\;=\; \ip{s'}{y}\,k(x + s'\,E',\, y)
       \,-\, \int_{\Rp^{d}}\!\ip{s'}{s}\,\E_{E}\bigl[k(x + s'\,E',\,y + s\,E)\bigr]\,\nu(\mathrm{d}s).
\end{align*}
Taking the expectation over $E'$ in the second line and substituting
both identities into~\eqref{eq:proof-final-stein-kernel} yields
\begin{align*}
  K_{\nu}(x,y)
  \;=&\; \ip{x}{y}\, k(x,y)
       \,-\, \int_{\Rp^{d}}\!\ip{x}{s}\,\E\bigl[k(x,\,y + s\,E)\bigr]\,\nu(\mathrm{d}s) \\
       &\,-\, \int_{\Rp^{d}}\!\ip{s'}{y}\,\E_{E'}\bigl[k(x + s'\,E',\, y)\bigr]\,\nu(\mathrm{d}s') \\
       &\,+\, \int_{\Rp^{d}}\!\!\int_{\Rp^{d}}\!\ip{s'}{s}\,\E_{E,E'}\bigl[k(x + s'\,E',\, y + s\,E)\bigr]\,\nu(\mathrm{d}s)\,\nu(\mathrm{d}s'),
\end{align*}
which is precisely~\eqref{eq:ggc-stein-kernel}. The
specialisation~\eqref{eq:ggc-stein-kernel-discrete} is obtained by
replacing $\nu$ with $\sum_{j}\alpha_{j}\,\delta_{s_{j}}$.
\end{proof}

\begin{remark}[Practical computation of the expectations of $E$]
\label{rem:expectation-computation}
The one-dimensional expectations in~\eqref{eq:ggc-stein-kernel}, of
the form $\E[k(\cdot,\, \cdot + s\, E)]$ with $E \sim \mathrm{Exp}(1)$,
admit a closed-form expression for several classical kernels (such as
the Laplace and Mat\'ern kernels with low regularity) and can be approximated by Gauss--Laguerre quadrature \cite{DavisRabinowitz1984} or by direct Monte-Carlo
sampling. The choice of the method does not affect the unbiasedness
properties of the resulting empirical KSD estimators.
\end{remark}

% ---------------------------------------------------------------------
\subsection{Theoretical guarantees}
\label{sec:ksd-ggc-convergence}
% ---------------------------------------------------------------------

The KSD construction of Sections~\ref{sec:ksd-choice-operator}--\ref{sec:ggc-stein-kernel}
is purely formal until one establishes that it actually defines a
statistical divergence on a sufficiently large class of target
laws and that the associated estimator inherits the standard
asymptotic guarantees of a $U$-statistic-based $M$-estimator. We now
state three results: a separation property (the KSD is a
statistical divergence, Proposition~\ref{prop:ksd-divergence}), a
strong-consistency / asymptotic-normality theorem for the MSDE
(Theorem~\ref{thm:msde-asymptotics}), and a weak-convergence theorem
for sequences of probability measures
(Theorem~\ref{thm:ksd-weak-convergence}).

\medskip

It is worth emphasising at the outset that these three results have
different scopes. The divergence property
(Proposition~\ref{prop:ksd-divergence}) is purely structural and
applies to any $X$ in $\mathcal{G}_{d}(\nu)$ for
which the Stein operator $\mathcal{T}_{\nu}$ is well-defined. The
strong-consistency statement of
Theorem~\ref{thm:msde-asymptotics} requires a compact parameter
space, and is therefore stated on the constrained set $\Theta_c$ of
Definition~\ref{def:theta-constrained}. The
weak-convergence result of
Theorem~\ref{thm:ksd-weak-convergence}, makes no
use of compactness or identifiability of the parameter space: it
holds on the entire discrete class $\mathcal{G}_{d,n}$. 

\medskip

In what follows, $\theta \in \Theta$ is a parameter of a discrete
model $\mathcal{G}_{d,n}$, $\mathbb{P}_{\theta}$ denotes the
corresponding GGC law, and $\mathcal{T}_{\theta}$ is the Stein
operator~\eqref{eq:stein-ggc-subordinator} with Thorin measure
$\nu_{\theta} = \sum_{j=1}^{n}\alpha_{j}\,\delta_{s_{j}}$. The Stein
kernel of Theorem~\ref{prop:ggc-stein-kernel} is then written
$k_{0,\theta}(x,y)$ to make the parameter dependence explicit. The
associated KSD between an arbitrary probability measure $\mathbb{Q}$
on $\R^{d}$ and the target $\mathbb{P}_{\theta}$ is
\begin{equation}
  \mathrm{KSD}_{\mathcal{T}}(\mathbb{Q}\,\|\,\mathbb{P}_{\theta})
  \;=\;
  \sup_{f \in \mathcal{H}_{K},\,\|f\|_{\mathcal{H}_{K}} \leq 1}
       \bigl|\,\E_{Y \sim \mathbb{Q}}\!\left[(\mathcal{T}_{\theta} f)(Y)\right]\,\bigr|.
  \label{eq:ksd-ggc-def}
\end{equation}

% ---------------------------------------------------------------------
\subsubsection*{Statistical divergence}
% ---------------------------------------------------------------------

The first guarantee is that $\mathrm{KSD}_{\mathcal{T}}$
distinguishes any laws with the target $\mathbb{P}_\theta$ .

\begin{proposition}[KSD as a statistical divergence]
\label{prop:ksd-divergence}
Assume that
\begin{enumerate}[label=(D\arabic*)]
  \item $\mathcal{T}_{\theta}$ is a Stein operator for $\mathbb{P}_{\theta}$
        on the test class $\mathcal{F}$ used in
        Theorem~\ref{lem:stein-ggc};
        \label{ass:D1}
  \item the RKHS $\mathcal{H}_{K} \subseteq \mathcal{F}$, the linear
        functional $f \mapsto \E_{\mathbb{Q}}[(\mathcal{T}_{\theta} f)(X)]$
        is continuous on $\mathcal{H}_{K}$, and $\mathcal{H}_{K}$ is
        dense in $\mathcal{F}$ for the topology that makes this
        functional continuous\footnote{This topology is dictated by the
        Stein operator: because $(\mathcal{T}_{\theta} f)(x)$ contains the
        unbounded linear term $\ip{x}{f(x)}$, the relevant topology is
        that of the weighted supremum norm
        $\|f\|_{\mathcal{F}} \coloneqq \sup_{x \in \R^{d}} \|f(x)\|/(1 + \|x\|)$
        rather than the plain uniform norm. With respect to it, the
        functional $f \mapsto \E_{\mathbb{Q}}[(\mathcal{T}_{\theta} f)(X)]$
        is continuous as soon as $\mathbb{Q}$ has a finite second moment,
        since $|\ip{x}{f(x)}| \leq \|x\|(1+\|x\|)\,\|f\|_{\mathcal{F}}$ is
        then $\mathbb{Q}$-integrable.};
        \label{ass:D2}    
  \item the Stein identity characterises $\mathbb{P}_{\theta}$ on
        $\mathcal{F}$: for every probability measure $\mathbb{Q}$,
        \[
          \E_{\mathbb{Q}}\bigl[(\mathcal{T}_{\theta} f)(X)\bigr] \;=\; 0
          \quad\forall f \in \mathcal{F}
          \quad\Longrightarrow\quad
          \mathbb{Q} \;=\; \mathbb{P}_{\theta}.
        \]
        \label{ass:D3}
\end{enumerate}
Then for every probability measure $\mathbb{Q}$ on $\R^{d}$,
\begin{equation*}
  \mathrm{KSD}_{\mathcal{T}}(\mathbb{Q}\,\|\,\mathbb{P}_{\theta})
  \;=\; 0
  \quad\Longleftrightarrow\quad
  \mathbb{Q} \;=\; \mathbb{P}_{\theta}.
\end{equation*}
\end{proposition}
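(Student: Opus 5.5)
The plan is to prove the two implications separately. Both are essentially functional-analytic consequences of (D1)--(D3) and the definition~\eqref{eq:ksd-ggc-def}.

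\emph{Direction $\Leftarrow$.} Assume $\mathbb{Q} = \mathbb{P}_{\theta}$. By (D2), $\mathcal{H}_{K} \subseteq \mathcal{F}$. By (D1), Theorem~\ref{lem:stein-ggc} then gives $\E_{Y\sim\mathbb{P}_\theta}[(\mathcal{T}_{\theta} f)(Y)] = 0$ for every $f \in \mathcal{H}_{K}$. The supremum in~\eqref{eq:ksd-ggc-def} is therefore a supremum of zeros, so $\mathrm{KSD}_{\mathcal{T}}(\mathbb{P}_\theta\,\|\,\mathbb{P}_{\theta}) = 0$.

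\emph{Direction $\Rightarrow$.} Assume $\mathrm{KSD}_{\mathcal{T}}(\mathbb{Q}\,\|\,\mathbb{P}_{\theta}) = 0$ and write $L(f) \coloneqq \E_{\mathbb{Q}}[(\mathcal{T}_{\theta} f)(Y)]$.

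First, $L$ vanishes on all of $\mathcal{H}_{K}$. Indeed, $L$ is linear because $\mathcal{T}_\theta$ is linear and expectation is linear. For $f \neq 0$ in $\mathcal{H}_K$, rescaling gives $|L(f)| = \|f\|_{\mathcal{H}_K}\, |L(f/\|f\|_{\mathcal{H}_K})| \le \|f\|_{\mathcal{H}_K}\,\mathrm{KSD}_{\mathcal{T}}(\mathbb{Q}\,\|\,\mathbb{P}_\theta) = 0$. The absolute value in~\eqref{eq:ksd-ggc-def} matters here: it makes the supremum genuinely control $|L|$ on the unit ball. (Symmetry of the ball under $f\mapsto -f$ would give the same conclusion without it.)

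Next, extend from $\mathcal{H}_K$ to $\mathcal{F}$. By (D2), $\mathcal{H}_{K}$ is dense in $\mathcal{F}$ for the topology $\tau$ in which $L$ is continuous, e.g.\ the weighted norm $\|\cdot\|_{\mathcal{F}}$ of the footnote. Fix $f \in \mathcal{F}$ and choose a net (or sequence, in the normed case) $f_\alpha \in \mathcal{H}_K$ with $f_\alpha \to f$ in $\tau$. Continuity then gives $L(f) = \lim L(f_\alpha) = 0$. Hence $L \equiv 0$ on $\mathcal{F}$, and (D3) yields $\mathbb{Q} = \mathbb{P}_{\theta}$.

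The main obstacle is making sure that $L$ is well-defined on the whole of $\mathcal{F}$, so that the continuity and density in (D2) actually apply to $\mathbb{Q}$. This is where integrability enters. I would check that $(\mathcal{T}_\theta f)(Y)$ is $\mathbb{Q}$-integrable for every $f\in\mathcal{F}$ by splitting it into two terms:
\begin{itemize}
\item the linear term $\ip{Y}{f(Y)}$, controlled by $\|Y\|(1+\|Y\|)\|f\|_{\mathcal{F}}$, which is integrable when $\mathbb{Q}$ has a finite second moment;
\item the jump term, which is bounded by a multiple of
\[
\sum_j \alpha_j \|s_j\| \,\E\bigl[1+\|Y\|+\|s_j\|E\bigr]\,\|f\|_{\mathcal{F}},
\]
and is finite because $\nu_\theta$ is finitely atomic (more generally, by~\eqref{eq:nu-integrability} together with a moment of order one on $\|s\|^2$, which holds trivially in $\mathcal{G}_{d,n}$).
\end{itemize}
These bounds also show that $|L(f)| \le C_{\mathbb{Q},\theta}\|f\|_{\mathcal{F}}$, i.e.\ they give the continuity of $L$ in the weighted norm directly. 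I would therefore state the result for $\mathbb{Q}$ in this moment class and, if $\mathbb{Q}$ lacks such moments, interpret $\mathrm{KSD} = 0$ only when $L$ is defined, as (D2) implicitly requires. The rest of the argument is routine.
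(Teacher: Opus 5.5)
Your proof is correct and follows the same route as the paper. The trivial direction uses (D1) together with $\mathcal{H}_K\subseteq\mathcal{F}$; the non-trivial direction shows that $L$ vanishes on $\mathcal{H}_K$, extends this to $\mathcal{F}$ by density and continuity from (D2), and concludes via (D3). Your explicit bound $|L(f)|\le C_{\mathbb{Q},\theta}\|f\|_{\mathcal{F}}$, which assumes a finite second moment for $\mathbb{Q}$ and $\int\|s\|^2\,\nu_\theta(\mathrm{d}s)<\infty$, is a useful refinement: the paper only asserts this continuity in the footnote to (D2) and leaves the jump term unchecked.
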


\begin{remark}[Verification of~\ref{ass:D1} and~\ref{ass:D3} for MGGC laws]
\label{rem:D1-D3-MGGC}
For MGGC distributions, condition~\ref{ass:D1} follows directly
from Theorem~\ref{lem:stein-ggc}. Moreover, the associated Stein identity
determines the characteristic function uniquely, and therefore
characterises the target MGGC law, which proves condition~\ref{ass:D3}.
\end{remark}

\begin{proof}
\smallskip\noindent\textbf{($\Leftarrow$) Trivial direction.}
If $\mathbb{Q} = \mathbb{P}_{\theta}$, then by~\ref{ass:D1} the Stein
identity $\E_{\mathbb{P}_{\theta}}[(\mathcal{T}_{\theta} f)(X)] = 0$ holds
for every $f \in \mathcal{F}$, in particular for every
$f \in \mathcal{H}_{K} \subseteq \mathcal{F}$. The supremum
in~\eqref{eq:ksd-ggc-def} is therefore zero, so
$\mathrm{KSD}_{\mathcal{T}}(\mathbb{P}_{\theta}\,\|\,\mathbb{P}_{\theta}) = 0$.

\smallskip\noindent\textbf{($\Rightarrow$) Non-trivial direction.}
Suppose $\mathrm{KSD}_{\mathcal{T}}(\mathbb{Q}\,\|\,\mathbb{P}_{\theta}) = 0$.
By definition of the supremum on the unit ball of $\mathcal{H}_{K}$,
\begin{equation*}
  \E_{\mathbb{Q}}\bigl[(\mathcal{T}_{\theta} f)(X)\bigr]
  \;=\; 0,
  \qquad \forall f \in \mathcal{H}_{K}.
\end{equation*}
Since $\mathcal{H}_{K}$ is dense in
$\mathcal{F}$ 
by~\ref{ass:D2}, hence:
\begin{equation}
  \E_{\mathbb{Q}}\bigl[(\mathcal{T}_{\theta} f)(X)\bigr]
  \;=\; 0,
  \qquad \forall f \in \mathcal{F}.
  \label{eq:proof-div-F}
\end{equation}

\smallskip\noindent\textbf{Conclusion via~\ref{ass:D3}.}
The Stein identity for $\mathbb{Q}$ on $\mathcal{F}$, established
in~\eqref{eq:proof-div-F}, combined with the characterising
property~\ref{ass:D3}, forces $\mathbb{Q} = \mathbb{P}_{\theta}$.

\end{proof}

% ---------------------------------------------------------------------
\subsubsection*{Strong consistency and asymptotic normality of the MSDE}
% ---------------------------------------------------------------------

\paragraph{Scope.}
The asymptotic theory of the MSDE relies crucially on the
\emph{compactness} of the parameter space and on the
\emph{identifiability} of the parametrisation. As discussed in
Section~\ref{sec:ggc-identifiability}, these two conditions are not satisfied on the unconstrained set
$\R^{n}_{+} \times (\Rp^{d})^{n}$. The constrained parameter space $\Theta_c$, see Definition~\ref{def:theta-constrained}, was introduced to
have identifiability and compactness. The strong-consistency and
asymptotic-normality results below are therefore stated on the
constrained set $\Theta_c$.

\medskip

Given an i.i.d.\ sample $Y_{1}, \dots, Y_{N}$ from a 
distribution $\mathbb{P}_{\theta_{0}}$ with
$\theta_{0} \in \Theta_c$, the empirical $\mathrm{KSD}^{2}$ is a
$U$-statistic
\begin{equation}
  U_{N}(\theta)
  \;\coloneqq\;
  \frac{1}{N(N-1)} \sum_{1 \leq i \neq j \leq N}
       k_{0,\theta}(Y_{i}, Y_{j}),
  \label{eq:U-stat}
\end{equation}
and the MSDE~\eqref{eq:msde} writes
\begin{equation*}
  \widehat{\theta}_{N}
  \;\in\;
  \underset{\theta \in \Theta_c}{\mathrm{argmin}}\;
       U_{N}(\theta).
\end{equation*}

Denote by $M(\theta) \coloneqq \mathrm{KSD}_{\mathcal{T}}^{2}(\mathbb{P}_{\theta_{0}}\,\|\,\mathbb{P}_{\theta})$
the population objective. By Proposition~\ref{prop:ksd-divergence}, if the hypothesis are verified,
together with the identifiability of $\mathcal{G}_{d,n}$ on $\Theta_c$
(Proposition~\ref{prop:gdn-identifiable}), $M(\theta) \geq 0$ with
equality iff $\theta = \theta_{0}$, so that $\theta_{0}$ is the
unique minimiser of $M$ on $\Theta_c$.

\begin{theorem}[Strong consistency and asymptotic normality of the MSDE on $\Theta$ \cite{BarpEtAl2019}]
\label{thm:msde-asymptotics}
Assume Proposition~\ref{prop:ksd-divergence} applies for every
$\theta \in \Theta_c$, and that:
\begin{enumerate}[label=(C\arabic*)]
  \item for every $(x,y) \in \R^{d} \times \R^{d}$, the map
        $\theta \mapsto k_{0,\theta}(x,y)$ is continuous;
        \label{ass:C-continuous}
  \item there exists a measurable envelope
        $H \colon \R^{d} \times \R^{d} \to [0,\infty)$ such that
        $\sup_{\theta \in \Theta_c}|k_{0,\theta}(x,y)| \leq H(x,y)$ for
        all $(x,y)$ and $\E\bigl[H(Y_{1}, Y_{2})\bigr] < \infty$;
        \label{ass:C-envelope}
  \item the model is identifiable on $\Theta_c$
        (Proposition~\ref{prop:gdn-identifiable}).
        \label{ass:C-id}
\end{enumerate}
Then the strong consistency
\begin{equation*}
  \widehat{\theta}_{N}
  \;\xrightarrow[N \to \infty]{\textnormal{a.s.}}\;
  \theta_{0}
\end{equation*}
holds. Suppose furthermore that, on a compact neighbourhood
$\mathcal{N} \subseteq \Theta_c$ of $\theta_{0}$, the map
$\theta \mapsto k_{0,\theta}(x,y)$ is of class $C^{2}$ and satisfies
\begin{enumerate}[label=(N\arabic*)]
  \item
        $\E\bigl[
        \|\nabla_{\theta} k_{0,\theta_{0}}(Y_{1},Y_{2})\|^{2}
        \bigr] < \infty$;
        \label{ass:N-grad-L2}

  \item there exist envelopes\footnote{An \emph{envelope} (or
        \emph{envelope function}) for a family
        $\{g_{\theta}\}_{\theta \in \Theta}$ of measurable functions is
        a single measurable function $H$ that dominates the whole
        family pointwise,
        $\sup_{\theta \in \Theta}|g_{\theta}(\cdot)| \leq H(\cdot)$.
        The integrability of $H$ then transfers uniformly in $\theta$
        to every member of the family, which is what allows the
        dominated-convergence and uniform-law-of-large-numbers
        arguments to go through.}
        $H_{1},H_{2}$ such that
        \(
          \sup_{\theta \in \mathcal{N}}
          \|\nabla_{\theta}k_{0,\theta}(x,y)\|
          \leq H_{1}(x,y),
\          \E\bigl[H_{1}(Y_{1},Y_{2})^{2}\bigr] < \infty,
        \)
        and
        \(
          \sup_{\theta \in \mathcal{N}}
          \|\Hess_{\theta}k_{0,\theta}(x,y)\|
          \leq H_{2}(x,y),
          \
          \E\bigl[H_{2}(Y_{1},Y_{2})\bigr] < \infty;
        \)
        \label{ass:N-envelopes}

  \item the population objective, defined as the limit
        \(
          M(\theta) \;=\; \lim_{N \to \infty}U_{N}(\theta),
        \)
        is twice differentiable at $\theta_{0}$ and its Hessian,
        $\Hess_{\theta}M(\theta_{0})$, is non-singular
        (equivalently, positive definite).
        \label{ass:N-hessian}
\end{enumerate}
Then the asymptotic normality
\begin{equation}
  \sqrt{N}\,\bigl(\widehat{\theta}_{N}-\theta_{0}\bigr)
  \;\xrightarrow[N \to \infty]{d}\;
  \mathcal{N}\!\left(
    0,\,
    \Hess_{\theta}M(\theta_{0})^{-1}
    (4\Sigma)
    \Hess_{\theta}M(\theta_{0})^{-1}
  \right)
  \label{eq:asymptotic-normality}
\end{equation}
holds, where
\(
  \Sigma = \mathrm{Var}\bigl(\psi(Y_{1})\bigr)
\)
and
\(
  \psi(x)
  \coloneqq
  \E\bigl[
    \nabla_{\theta}k_{0,\theta_{0}}(x,Y_{2})
  \bigr].
\)
\end{theorem}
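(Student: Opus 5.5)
The argument follows the standard M-estimation route of \citet{BarpEtAl2019}, specialised to the $U$-statistic $U_N(\theta)$ of \eqref{eq:U-stat}. It has two parts: consistency via a uniform strong law over the compact set $\Theta_c$, then normality via a Taylor expansion of the score.

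\textbf{Strong consistency.} The first step is to identify the limit objective. By Proposition~\ref{prop:ksd-norm} and Proposition~\ref{prop:ksd-consistency}, for each fixed $\theta$ we have $U_N(\theta)\to M(\theta)=\E[k_{0,\theta}(Y_1,Y_2)]$ almost surely. By Proposition~\ref{prop:ksd-divergence} together with \ref{ass:C-id}, $M\ge 0$ and $\theta_0$ is its unique zero on $\Theta_c$.

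Next I would upgrade this pointwise convergence to uniform convergence. I would use a uniform strong law for $U$-statistics indexed by a compact parameter set: the kernel is continuous in $\theta$ by \ref{ass:C-continuous} and admits the integrable envelope $H$ by \ref{ass:C-envelope}. The concrete route is the classical bracketing argument. For each $\theta$ and each small ball $B(\theta,r)$, the functions $\sup_{B}k_{0,\cdot}$ and $\inf_{B}k_{0,\cdot}$ are dominated by $H$, and by continuity and dominated convergence they tend to $k_{0,\theta}$ as $r\to0$. Covering $\Theta_c$ by finitely many such balls and applying Hoeffding's SLLN for $U$-statistics to these finitely many bracketing kernels gives
\[
\sup_{\theta\in\Theta_c}|U_N(\theta)-M(\theta)|\to 0\quad\text{a.s.}
\]
The same argument also shows that $M$ is continuous.

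To conclude consistency, take any $\varepsilon>0$. Since $M$ is continuous on the compact set $\Theta_c\setminus B(\theta_0,\varepsilon)$ and positive there, it attains a minimum $\eta>0$ on that set. Uniform convergence then forces every minimiser $\widehat\theta_N$ to lie in $B(\theta_0,\varepsilon)$ eventually, almost surely.

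\textbf{Asymptotic normality.} Here I assume $\theta_0$ lies in the interior of $\Theta_c$, which must be added implicitly. By consistency, $\widehat\theta_N\in\mathcal N$ eventually and $\nabla U_N(\widehat\theta_N)=0$. A mean-value expansion gives
\[
0=\nabla U_N(\theta_0)+\Hess U_N(\bar\theta_N)(\widehat\theta_N-\theta_0).
\]

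For the Hessian term, \ref{ass:N-envelopes} lets me apply the same uniform SLLN to $\Hess_\theta k_{0,\theta}$ on $\mathcal N$. Combined with $\bar\theta_N\to\theta_0$, this yields $\Hess U_N(\bar\theta_N)\to \Hess M(\theta_0)$ almost surely. Differentiation under $\E$ is justified by the envelopes, and the limit is invertible by \ref{ass:N-hessian}.

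For the score term, $\nabla U_N(\theta_0)$ is a $U$-statistic with symmetric kernel $\nabla_\theta k_{0,\theta_0}$. Its mean is $\nabla M(\theta_0)=0$, since $\theta_0$ is an interior minimum. Its kernel is square-integrable by \ref{ass:N-grad-L2}. Hoeffding's decomposition then gives
\[
\sqrt N\,\nabla U_N(\theta_0)=\frac{2}{\sqrt N}\sum_i\bigl(\psi(Y_i)-\E\psi\bigr)+o_P(1)\;\xrightarrow{d}\;\mathcal N(0,4\Sigma).
\]
Slutsky's lemma then yields \eqref{eq:asymptotic-normality}.

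A remark on degeneracy: at the true parameter, the Stein identity makes $\psi$ itself a derivative of $\E[k_{0,\theta}(x,Y)]$, which vanishes at $\theta_0$ for every $x$. In that case $\Sigma$ may be degenerate, and the limit is then read as a degenerate Gaussian. I would simply note this and not pursue it further.

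\textbf{Main obstacle.} I expect the hardest step to be the uniform SLLN for the $U$-process. It needs measurability of the suprema over balls, which I would obtain from separability of $\Theta_c$ and continuity in $\theta$, and the bracketing construction must work for the two-sample kernel. I would follow the proof of Theorem~3 in \citet{BarpEtAl2019} and check that only \ref{ass:C-continuous} and \ref{ass:C-envelope} are used there.
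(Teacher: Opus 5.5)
Your proposal is correct and follows the same route the paper indicates by deferring to \citet{BarpEtAl2019}: a uniform strong law for the $U$-process over the compact set $\Theta_c$ (your bracketing argument with the envelope $H$), identification of $\theta_0$ as the unique zero of $M$, and then a Taylor expansion of the score combined with Hoeffding's CLT and Slutsky; you are also right to make explicit that $\theta_0$ must be interior to $\Theta_c$. Only your side remark on degeneracy is off: for fixed $x$, the map $\theta\mapsto\E[k_{0,\theta}(x,Y_2)]$ vanishes at $\theta_0$ but need not have a critical point there (to first order it is linear in $\theta-\theta_0$), so the Stein identity gives only $\E[\psi(Y_1)]=0$, not $\psi\equiv 0$, and $\Sigma$ is generically non-degenerate.
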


\begin{remark}
For the model $\mathcal{G}_{d,n}$ on $\Theta_c$, assumptions \emph{(C1)-(C2)} and \emph{(N1)-(N2)-(N3)} hold for the Gaussian kernel. The compactness of $\Theta_c$ yields uniform bounds on the parameters, while the boundedness and smoothness of the Gaussian kernel ensure integrability, continuity, and envelopes for the Stein kernel, his gradient, and his Hessian. \emph{(C3)} is automatically satisfied by Proposition \ref{prop:gd-identifiable}.
\end{remark}

\begin{proof}
The argument proceeds in two stages, following the standard
$M$-estimator pattern based on uniform laws of large numbers and the
central limit theorem for $U$-statistics \cite{Hoeffding1948}.
For more details, see \cite[Theorem~4]{BarpEtAl2019}.
\end{proof}

The two conditions~\ref{ass:N-grad-L2} and~\ref{ass:N-hessian} play
complementary roles in the asymptotic-normality statement. Condition~\ref{ass:N-grad-L2}, the square-integrability of the
parameter-gradient of the Stein kernel, is what makes the rescaled
gradient\(\sqrt{N}\,\nabla_{\theta}U_{N}(\theta_{0}),\)
which is a second-order $U$-statistic with kernel
$\nabla_{\theta}k_{0,\theta_{0}}$, amenable to Hoeffding's central
limit theorem. A finite second moment yields a Gaussian limit with
asymptotic covariance $4\Sigma$. Condition~\ref{ass:N-hessian}, the non-singularity of the population
Hessian
\(
  \Hess_{\theta}M(\theta_{0}),
\)
is a local curvature and local-identifiability requirement at the true
parameter. It ensures that $\theta_{0}$ is a non-degenerate minimum of
the population objective $M$, so that the Hessian can be inverted in
the Taylor expansion of the first-order optimality condition. Together, these two conditions turn the linearisation
\(
  \sqrt{N}\bigl(\widehat{\theta}_{N}-\theta_{0}\bigr)
  =
  -\bigl[\Hess_{\theta}U_{N}\bigr]^{-1}
  \sqrt{N}\,\nabla_{\theta}U_{N}(\theta_{0})
\)
into the sandwich limit~\eqref{eq:asymptotic-normality} via Slutsky's
lemma: Condition~\ref{ass:N-grad-L2} controls the random gradient
factor, while Condition~\ref{ass:N-hessian} controls the deterministic
matrix factor that rescales it.
% ---------------------------------------------------------------------
\subsubsection*{Weak convergence from the KSD}
% ---------------------------------------------------------------------

\paragraph{Scope.}
In contrast to Theorem~\ref{thm:msde-asymptotics}, the
weak-convergence theorem below makes no use of compactness or
identifiability of the parameter space, since it concerns sequences
of \emph{probability measures} on $\R^{d}$ rather than parameters estimation on
$\Theta_c$. It applies to any target $\mathbb{P}_{}$ in the unconstrained class $\mathcal{G}_{d}$, and serves
as the topological complement to the divergence property of
Proposition~\ref{prop:ksd-divergence}: the latter says that
$\mathrm{KSD}_{\mathcal{T}}$ \emph{separates} laws, while the former
says that it \emph{controls weak convergence} as soon as the
candidate sequence is tight and uniformly integrable. The result
adapts to the GGC setting the convergence-determining theorems of
\citet{GorhamMackey2017} for Langevin Stein operators and of
\citet{BarpEtAl2019} for matrix-valued and diffusion-based Stein
operators.

\begin{theorem}[Weak convergence from the KSD on $\mathcal{G}_{d}$]
\label{thm:ksd-weak-convergence}
Let $\nu$ be a Thorin measure on $\Rp^{d}$ and let
$\mathbb{P}_{\nu} \in \mathcal{G}_{d}(\nu)$ be the target GGC
distribution, with integral Stein operator $\mathcal{T}_{\nu}$ of
Theorem~\ref{lem:stein-ggc}. Suppose the matrix-valued kernel
$K(x,y) = k(x,y)\, I_{d}$ is built such that
$\mathrm{KSD}_{\mathcal{T}}(\,\cdot\,\|\,\mathbb{P}_{\nu})$ is a
statistical divergence (Proposition~\ref{prop:ksd-divergence}). Let
$(\mathbb{Q}_{N})_{N \geq 1}$, with
$\mathbb{Q}_{N} = \mathbb{P}_{\nu_{N}} \in \mathcal{G}_{d}(\nu_{N})$, be a
sequence of GGC laws whose Thorin measures $\nu_N$satisfy the uniform
second-moment bound
\begin{equation}
  \sup_{N \geq 1} \int_{\Rp^{d}} \bigl(\|s\| + \|s\|^{2}\bigr)\,\nu_{N}(\mathrm{d}s)
  \;<\; \infty.
  \label{eq:thorin-moment-bound}
\end{equation}
Then
\begin{equation}
  \mathrm{KSD}_{\mathcal{T}}(\mathbb{Q}_{N}\,\|\,\mathbb{P}_{\nu})
  \;\xrightarrow[N \to \infty]{}\; 0
  \quad\Longrightarrow\quad
  \mathbb{Q}_{N}
  \;\xRightarrow[N \to \infty]{}\;
  \mathbb{P}_{\nu}.
  \label{eq:ksd-weak}
\end{equation}
\end{theorem}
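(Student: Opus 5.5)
The proof is a tightness-plus-identification argument: tightness comes from the moment bound \eqref{eq:thorin-moment-bound}, and every subsequential weak limit is shown to be $\mathbb{P}_{\nu}$ through the divergence property of Proposition~\ref{prop:ksd-divergence}.

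\textbf{Moments and tightness.} For $X_N \sim \mathbb{Q}_N = \mathcal{G}_{d}(\nu_N)$, differentiating the Lévy--Khintchine form of Proposition~\ref{prop:levy-measure-multi} gives the mean and covariance. The Lévy measure $\Pi_N$ satisfies $\int y\,\Pi_N(\mathrm{d}y) = \int s\,\nu_N(\mathrm{d}s)$, since $\int_0^\infty z\,e^{-z}/z\,\mathrm{d}z = 1$. Similarly $\int \|y\|^2\,\Pi_N(\mathrm{d}y) = \int \|s\|^2\,\nu_N(\mathrm{d}s)$, since $\int_0^\infty z^2 e^{-z}/z\,\mathrm{d}z = 1$. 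Hence
\[
\E\|X_N\|^2 \le \Bigl(\int \|s\|\,\nu_N(\mathrm{d}s)\Bigr)^2 + \int \|s\|^2\,\nu_N(\mathrm{d}s),
\]
which is bounded uniformly in $N$ by \eqref{eq:thorin-moment-bound}. Markov's inequality then makes $(\mathbb{Q}_N)$ tight. The uniform second-moment bound also makes $\|X_N\|^{1+\epsilon}$ uniformly integrable for $\epsilon<1$, and in particular $\|X_N\|$ is uniformly integrable. By Prokhorov, every subsequence has a further subsequence $\mathbb{Q}_{N'} \Rightarrow \mathbb{Q}_\infty$, and $\mathbb{Q}_\infty$ has a finite second moment by Fatou.

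\textbf{Passing to the limit in the Stein functional.} Fix $f \in \mathcal{H}_K$. I will assume $k$ is bounded, continuous and $C_0$-type (e.g.\ Gaussian), so that $f$ is bounded and continuous. Then $x \mapsto \ip{x}{f(x)}$ is continuous with linear growth, and uniform integrability of $\|X_N\|$ gives $\E_{\mathbb{Q}_{N'}}\ip{X}{f(X)} \to \E_{\mathbb{Q}_\infty}\ip{X}{f(X)}$. The second term of $\mathcal{T}_\nu f$ is $x \mapsto \int \E[\ip{s}{f(x+sE)}]\,\nu(\mathrm{d}s)$. It is bounded by $\|f\|_\infty \int \|s\|\,\nu(\mathrm{d}s) < \infty$ using \eqref{eq:nu-integrability}, and it is continuous in $x$ by dominated convergence. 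Its expectation therefore converges under weak convergence. This gives
\[
\E_{\mathbb{Q}_{N'}}[\mathcal{T}_\nu f] \to \E_{\mathbb{Q}_\infty}[\mathcal{T}_\nu f],
\qquad
\bigl|\E_{\mathbb{Q}_{N'}}[\mathcal{T}_\nu f]\bigr| \le \|f\|_{\mathcal{H}_K}\,\mathrm{KSD}_{\mathcal{T}}(\mathbb{Q}_{N'}\|\mathbb{P}_\nu) \to 0,
\]
so $\E_{\mathbb{Q}_\infty}[\mathcal{T}_\nu f]=0$ for all $f\in\mathcal{H}_K$, i.e.\ $\mathrm{KSD}_{\mathcal{T}}(\mathbb{Q}_\infty\|\mathbb{P}_\nu)=0$. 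Since $\mathbb{Q}_\infty$ has a finite second moment, the continuity requirement in \ref{ass:D2} holds. Proposition~\ref{prop:ksd-divergence} then yields $\mathbb{Q}_\infty=\mathbb{P}_\nu$. Every subsequence thus has a further subsequence converging to the same limit, so the whole sequence converges weakly to $\mathbb{P}_\nu$.

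\textbf{Main obstacle.} The delicate step is exchanging the limit with the unbounded term $\ip{x}{f(x)}$, together with checking that the hypotheses of Proposition~\ref{prop:ksd-divergence} hold for the limit $\mathbb{Q}_\infty$, which is not known a priori to be a GGC. This is exactly where the $\|s\|^2$ part of \eqref{eq:thorin-moment-bound} is needed: it upgrades mere boundedness of first moments to uniform integrability. If $k$ is not assumed bounded, I would instead bound $|f(x)|\le\|f\|_{\mathcal{H}_K}\sqrt{k(x,x)}$ and require $k(x,x)$ to grow at most polynomially, which must be compensated by the available moments. The GGC structure of the $\mathbb{Q}_N$ is used only for the moment computation.
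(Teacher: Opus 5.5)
Your proof is correct and follows the paper's proof essentially step for step. You bound second moments uniformly from the Thorin moment condition, get tightness via Markov and Prokhorov, and pass the Stein functional to the limit, using uniform integrability for $\langle x, f(x)\rangle$ and boundedness plus continuity for the jump term; you then identify the limit through Proposition~\ref{prop:ksd-divergence} and conclude with the sub-subsequence argument. The differences are minor: you compute the mean and covariance from the L\'evy measure $\Pi_N$ rather than by differentiating $\log\varphi$ directly, and you check explicitly (via Fatou) that $\mathbb{Q}_\infty$ has a finite second moment, so the continuity requirement in~\ref{ass:D2} holds for the limit, a point the paper leaves implicit.
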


\begin{remark}
In the discrete case $\mathcal{G}_{d,n}$, under the assumptions of the theorem, the implication \eqref{eq:ksd-weak} holds on the entire unconstrained parameter space. Classics kernels such as Gaussian kernel, IMQ kernel or Matérn kernel satisfies all the hypothesis.
\end{remark}

\begin{proof}
The proof adapts to the GGC operator the convergence-determining
argument of \citet{GorhamMackey2017}. Departure point is that tightness and uniform integrability are not
assumed: they are \emph{derived} in Step~1 from the structural
bound~\eqref{eq:thorin-moment-bound}, using the fact that every
candidate $\mathbb{Q}_{N}$ is itself a GGC law.

\smallskip\noindent\textbf{Step 1: tightness and uniform moments from
the Thorin bound.}
Differentiating the log-characteristic function~\eqref{eq:cf-Gd} of
$\mathbb{Q}_{N} = \mathbb{P}_{\nu_{N}}$ gives
\[
  \nabla_{t}\log\charf_{\nu_{N}}(t)
  \;=\; \im\!\int_{\Rp^{d}}\!\frac{s}{1 - \im\,\ip{s}{t}}\,\nu_{N}(\mathrm{d}s),
  \qquad
  \nabla_{t}^{2}\log\charf_{\nu_{N}}(t)
  \;=\; -\!\int_{\Rp^{d}}\!\frac{s\,s^{\top}}{(1 - \im\,\ip{s}{t})^{2}}\,\nu_{N}(\mathrm{d}s).
\]
Evaluating at $t = 0$, the mean vector and the covariance matrix of
$\mathbb{Q}_{N}$ are
\[
  m_{\mathbb{Q}_{N}} \;\coloneqq\; \E_{\mathbb{Q}_{N}}[X] \;=\; \int_{\Rp^{d}}\! s\,\nu_{N}(\mathrm{d}s),
\]
$$
 \Sigma_{\mathbb{Q}_{N}}
  \;\coloneqq\; \E_{\mathbb{Q}_{N}}\!\bigl[(X - m_{\mathbb{Q}_{N}})(X - m_{\mathbb{Q}_{N}})^{\top}\bigr]
  \;=\; \int_{\Rp^{d}}\! s\,s^{\top}\,\nu_{N}(\mathrm{d}s) \;\in\; \R^{d \times d}.
$$
Since $\E_{\mathbb{Q}_{N}}\|X\|^{2} = \|m_{\mathbb{Q}_{N}}\|^{2} + \mathrm{tr}\,\Sigma_{\mathbb{Q}_{N}}$,
\[
  \E_{\mathbb{Q}_{N}}\!\bigl[\|X\|^{2}\bigr]
  \;\leq\; \Bigl(\int_{\Rp^{d}}\!\|s\|\,\nu_{N}(\mathrm{d}s)\Bigr)^{2}
       + \int_{\Rp^{d}}\!\|s\|^{2}\,\nu_{N}(\mathrm{d}s),
\]
which because of ~\eqref{eq:thorin-moment-bound} is bounded by a constant
$C < \infty$ independent of $N$:
\begin{equation}
  \sup_{N \geq 1}\E_{\mathbb{Q}_{N}}\!\bigl[\|X\|^{2}\bigr] \;\leq\; C \;<\; \infty.
  \label{eq:uniform-second-moment}
\end{equation}
Markov's
inequality gives $\sup_{N}\mathbb{Q}_{N}(\|X\| > R) \leq C/R^{2} \to 0$
as $R \to \infty$, so $(\mathbb{Q}_{N})_{N \geq 1}$ is tight.

\smallskip\noindent\textbf{Step 2: extraction of a convergent
subsequence.}
By Prokhorov's theorem and Step~1, every subsequence of $(\mathbb{Q}_{N})$
admits a further subsequence -- still denoted $(\mathbb{Q}_{N_{j}})$ --
converging weakly to some probability measure $\mathbb{Q}_{\infty}$ on
$\R^{d}$.

\smallskip\noindent\textbf{Step 3: passing the Stein identity to the
limit.}
Fix $f \in \mathcal{H}_{K}$ with $\|f\|_{\mathcal{H}_{K}} \leq 1$. The
hypothesis together with the variational definition~\eqref{eq:ksd-ggc-def}
gives $\E_{\mathbb{Q}_{N}}[(\mathcal{T}_{\nu} f)(Y)] \to 0$. Recall that
\[
  (\mathcal{T}_{\nu} f)(x)
  \;=\; \ip{x}{f(x)} \,-\, \int_{\Rp^{d}}\!\E_{E}\bigl[\ip{s}{f(x + s\, E)}\bigr]\,\nu(\mathrm{d}s).
\]
The integral term is a bounded continuous function of $x$ (bounded by
$\displaystyle \|f\|_{\infty}\int\|s\|\,\nu(\mathrm{d}s) < \infty$, as $f$ embeds in the
bounded continuous functions since $k$ is bounded), and so passes to the
limit by the Portmanteau theorem. The linear term $\ip{x}{f(x)}$ is
continuous but unbounded; from $|\ip{x}{f(x)}| \leq \|f\|_{\infty}\|x\|$
and the uniform second moment~\eqref{eq:uniform-second-moment}, the
family $\{x \mapsto \ip{x}{f(x)}\}$ is uniformly integrable along
$(\mathbb{Q}_{N})$. The continuous-mapping theorem combined with uniform
integrability then upgrades $\mathbb{Q}_{N_{j}} \Rightarrow \mathbb{Q}_{\infty}$
to convergence of the corresponding integrals, so that
\[
  \E_{\mathbb{Q}_{N_{j}}}\!\bigl[(\mathcal{T}_{\nu} f)(Y)\bigr]
  \;\xrightarrow[j \to \infty]{}\;
  \E_{\mathbb{Q}_{\infty}}\!\bigl[(\mathcal{T}_{\nu} f)(Y)\bigr].
\]

\smallskip\noindent\textbf{Step 4: identification of the limit.}
Combining the two previous displays,
$\E_{\mathbb{Q}_{\infty}}[(\mathcal{T}_{\nu} f)(Y)] = 0$ for every
$f \in \mathcal{H}_{K}$ with $\|f\|_{\mathcal{H}_{K}} \leq 1$, hence (by
homogeneity) for every $f \in \mathcal{H}_{K}$. Since $\mathrm{KSD}_{\mathcal{T}}$ is a statistical
divergence (Proposition~\ref{prop:ksd-divergence}), it forces
$\mathbb{Q}_{\infty} = \mathbb{P}_{\nu}$.

\smallskip\noindent\textbf{Step 5: convergence of the whole sequence.}
Every subsequential weak limit of $(\mathbb{Q}_{N})$ equals
$\mathbb{P}_{\nu}$; combined with tightness (Step~1), the standard
sub-subsequence argument yields $\mathbb{Q}_{N} \Rightarrow \mathbb{P}_{\nu}$
as $N \to \infty$.
\end{proof}

% =====================================================================
\section{Numerical applications}
\label{sec:numerical-applications}
% =====================================================================

The aim of this section is to test the methodology developed in the
previous sections on synthetic data. We start with a study of the
identifiability of the parameters (Section~\ref{sec:num-identifiability}),
which emphazise the role of the constrained parameter space $\Theta$
introduced in Definition~\ref{def:theta-constrained} and motivates
the practical choices made in the subsequent experiments. Throughout this section, we work with the isotropic matrix-valued
reproducing kernel
\begin{equation}
  K(x,y) \;=\; I_{d}\, k(x,y),
  \qquad
  k(x,y) \;=\; \exp\!\left(-\frac{\|x - y\|^{2}}{2\sigma^{2}}\right),
  \label{eq:rbf-kernel}
\end{equation}
where $k$ is the Gaussian (RBF) kernel of bandwidth $\sigma > 0$. This
choice satisfies all the regularity assumptions invoked in
Sections~\ref{sec:ggc-stein-kernel} and~\ref{sec:ksd-ggc-convergence}:
$k$ is bounded, continuous, translation-invariant, $C^{\infty}$,
characteristic, and $C_{0}$-universal on $\R^{d}$
\cite{Sriperumbudur2010}, so that
Theorems~\ref{thm:msde-asymptotics} and~\ref{thm:ksd-weak-convergence}
both apply. Other bounded characteristic kernels, such as sufficiently smooth Matérn or inverse multiquadric (IMQ) kernels, could also be used.

\paragraph{Algorithmic complexity.}
Evaluating the empirical objective $U_{N}(\theta)$ in~\eqref{eq:U-stat}
pass by evaluating the Stein kernel $k_{0,\theta}(Y_{i}, Y_{j})$ at
every ordered pair $(i,j)$ with $i \neq j$, that is, $N(N-1)$ kernel
evaluations. With the discrete-case
formula~\eqref{eq:ggc-stein-kernel-discrete}, each evaluation is a
sum of $(n+1)^{2} = 1 + 2n + n^{2}$ terms: a single diagonal term
$\ip{x}{y}\,k(x,y)$, $2n$ single-jump terms (two per atom of the Thorin
measure), and $n^{2}$ double-jump terms (one per ordered pair of atoms),
each term involving a one-dimensional expectation against an
$\mathrm{Exp}(1)$ variable. Computing $U_{N}(\theta)$ at a
single $\theta$ therefore costs
$\mathcal{O}\!\bigl(\,N^{2}\, n^{2}\, q\,\bigr)$
operations, where $q$ is the number of quadrature nodes (or
Monte-Carlo samples) used to approximate each
expectation\footnote{For the isotropic Gaussian
kernel~\eqref{eq:rbf-kernel}, the single-jump expectations
$\E\bigl[k(x,\, y + s_{j} E)\bigr]$ and
$\E\bigl[k(x + s_{j} E,\, y)\bigr]$ appearing
in~\eqref{eq:ggc-stein-kernel-discrete} are available in closed form,
because $k(x, y + s\, E)$ is a univariate Gaussian random variable in the scalar
random variable $s\, E$ -- which integrates explicitly against the
exponential density (see appendix \ref{app:gaussian-single-jump}). The double-jump expectation
$\E\bigl[k(x + s_{j'} E',\, y + s_{j} E)\bigr]$ is, however, no longer
available in closed form, since the integrand becomes a function of
the two-dimensional variable $(s_{j'} E', s_{j} E)$ whose joint
distribution does not have a Gaussian-conjugate structure with the
RBF kernel. We therefore compute the single-jump expectations
analytically (for speed and accuracy) and the double-jump expectations
by Gauss--Laguerre quadrature with $q$ nodes per integral, i.e.\ $q^{2}$
nodes for each double expectation.}.
% ---------------------------------------------------------------------
\subsection{Parameter identifiability}
\label{sec:num-identifiability}
% ---------------------------------------------------------------------

The first experiment compares the behaviour of the MSDE on the
constrained parameter space $\Theta_c$ of
Definition~\ref{def:theta-constrained} against its behaviour on the
unconstrained set $\R^{n}_{*} \times (\Rp^{d})^{n}$. The aim is to
illustrate empirically the theoretical fact that strong consistency
(Theorem~\ref{thm:msde-asymptotics}) holds on $\Theta_c$ but \emph{not}
on the unconstrained set.

\paragraph{Experimental setup.}
We work in dimension $d = 1$ with $n = 2$ atoms. The data-generating
process is $Y_{1}, \dots, Y_{N} \overset{\text{i.i.d.}}{\sim} \mathbb{P}_{\theta_{0}}$,
where $\theta_{0} = (\alpha_{0}, s_{0})$ is the
$\mathcal{G}_{1,2}$-parameter
\[
  \alpha_{0} \;=\; (4,\, 4),
  \qquad
  s_{0} \;=\; (2,\, 6).
\]
The sample size is fixed at $N = 3000$. The MSDE
$\widehat{\theta}_{N}$ is computed twice on each sample: once on the
compact constrained space $\Theta_c$ -- with separation parameter
$\delta$ chosen below the actual gap between the two true atoms
-- and once on the unconstrained set
$\R^{2}_{*} \times \R^{2}_{*}$. To assess the variability of the
estimator, the entire procedure is repeated independently on
$60$ different samples, yielding $60$ values of $\widehat{\theta}_{N}$
in each setting. The two resulting empirical distributions are
compared, parameter by parameter.

\paragraph{Numerical results.}
Figure~\ref{fig:identif-boxplots} reports the boxplots; the contrast is
striking. On $\Theta_c$ (blue), the four estimators concentrate sharply
around the true values $\alpha_{0}$ and $s_{0}$ -- the boxes are barely
visible, an almost-deterministic behaviour consistent with the
strong-consistency statement of Theorem~\ref{thm:msde-asymptotics}. On the
unconstrained set (orange), all four exhibit broad distributions: biased
medians, inter-quartile ranges spanning several units, and whiskers several
times larger than the true parameters. Strong consistency of the MSDE
clearly fails outside $\Theta_c$.

\begin{figure}[t]
  \centering
  \includegraphics[width=0.95\linewidth]{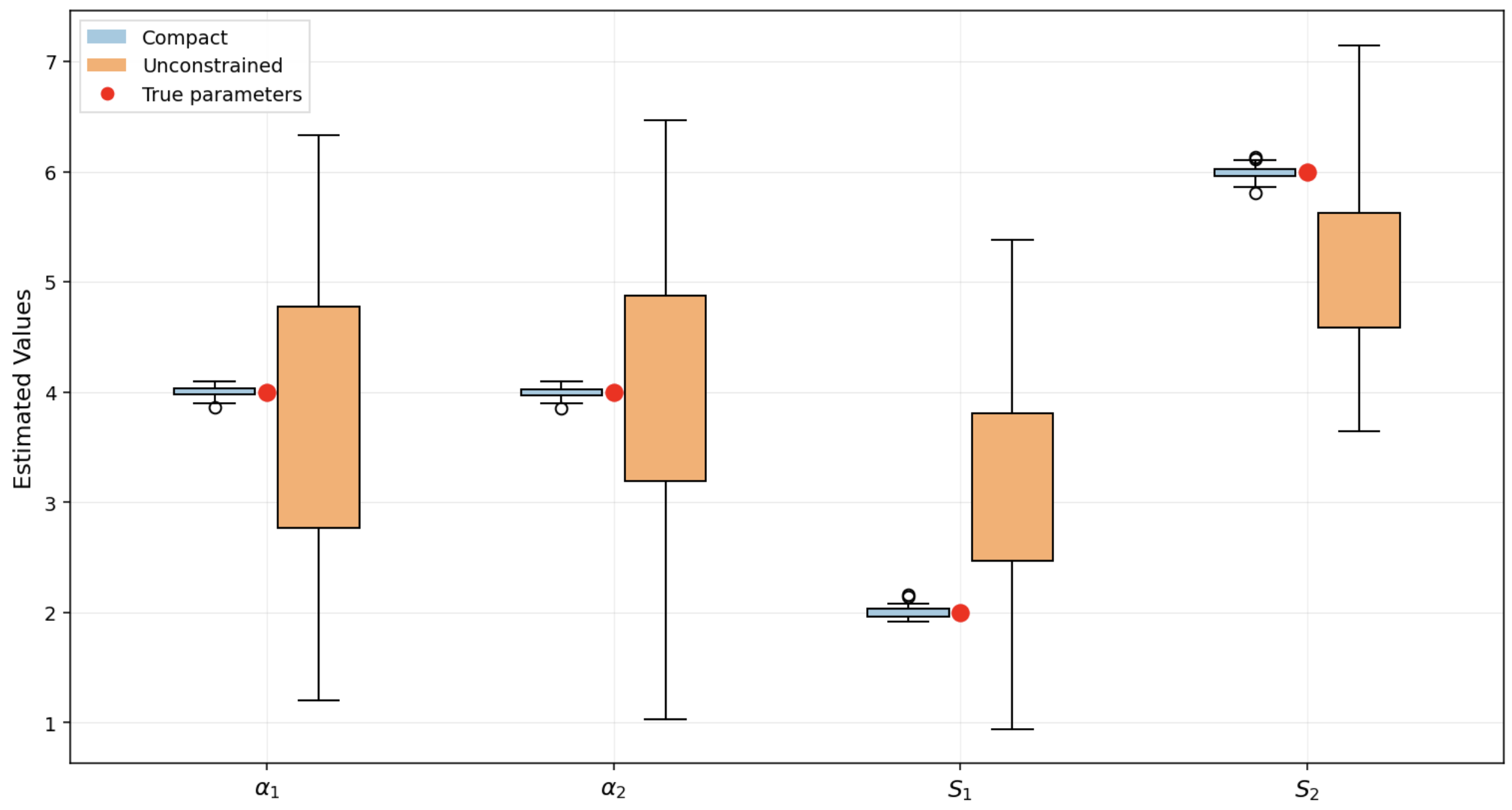}
  \caption{Boxplots of the MSDE $\widehat{\theta}_{N}$ across $60$
  independent samples of size $N = 3000$ drawn from a
  $\mathcal{G}_{1,2}$ distribution with true parameters
  $\alpha_{0} = (4, 4)$ and $s_{0} = (2, 6)$ (red dots). Blue (left):
  estimator computed on the compact constrained set $\Theta_c$ of
  Definition~\ref{def:theta-constrained}. Orange (right): estimator computed
  on the unconstrained set $\R^{2}_{*} \times \R^{2}_{*}$.}
  \label{fig:identif-boxplots}
\end{figure}
% ---------------------------------------------------------------------
\subsection{Comparison with the Laguerre projection method}
\label{sec:num-laverny-comparison}
% ---------------------------------------------------------------------

The second experiment compares the MSDE on the GGC class with the
Laguerre projection method of \citet{LavernyEtAl2021}, which
estimates a target density on $\Rp^{d}$ by projecting it onto a
truncated tensorised Laguerre basis and recovering the GGC parameters
from the sample moments via a moment-matching procedure. For the
KSD-based method, two operators of Section~\ref{sec:stein-ggc} are
tested: the polynomial-coefficient \emph{differential operator
kernel}~\eqref{eq:stein-arras-discrete} and the
\emph{exponential-jump kernel}~\eqref{eq:stein-ggc-discrete}. The
explicit form of the Stein kernel obtained by applying the
polynomial operator to the Gaussian RBF $K(x,y) = I_{d}\, k(x,y)$ is
derived in Appendix; the formula is
concise but involves multivariate Hermite polynomials and is
substantially more expensive to evaluate than its exponential-jump
counterpart of Proposition~\ref{prop:ggc-stein-kernel}.  Estimations are made on $\Theta$.

\paragraph{Experimental setup.}
The target distribution is the lognormal $\mathrm{LN}(0, 0.83)$,
which belongs to $\mathcal{G}_{1}$ but lies outside any finite
sub-class $\mathcal{G}_{1,n}$ (Example~\ref{ex:pareto-lognormal}).
Each method approximates it by a five-atom GGC, i.e.\ by
$\mathbb{P}_{\widehat{\theta}_{N}} \in \mathcal{G}_{1,5}$. The
sample size is $N = 3000$.

\begin{figure}[h]
  \centering
  \begin{tabular}{cc}
    \includegraphics[width=0.45\linewidth]{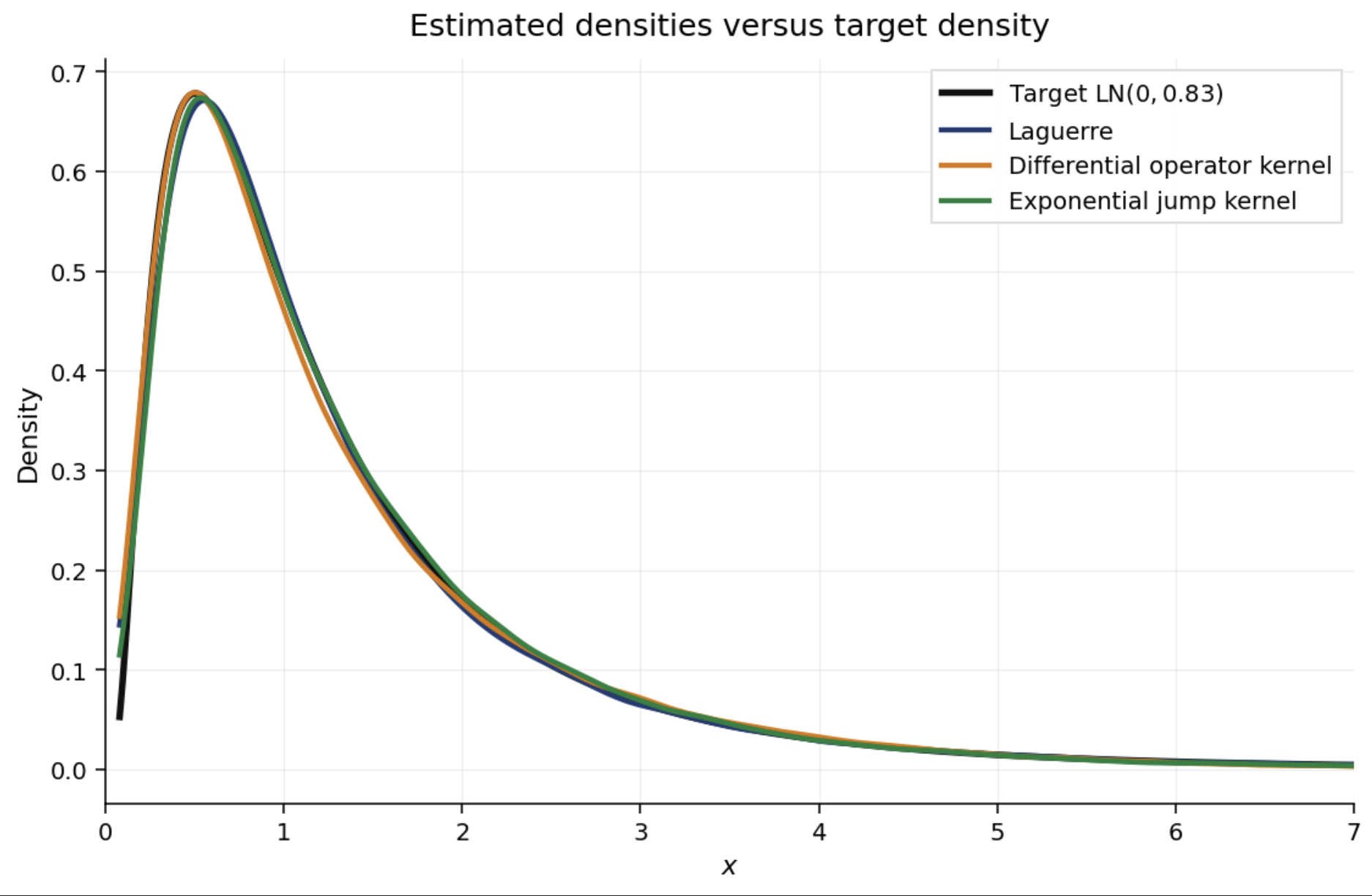} &
    \includegraphics[width=0.45\linewidth]{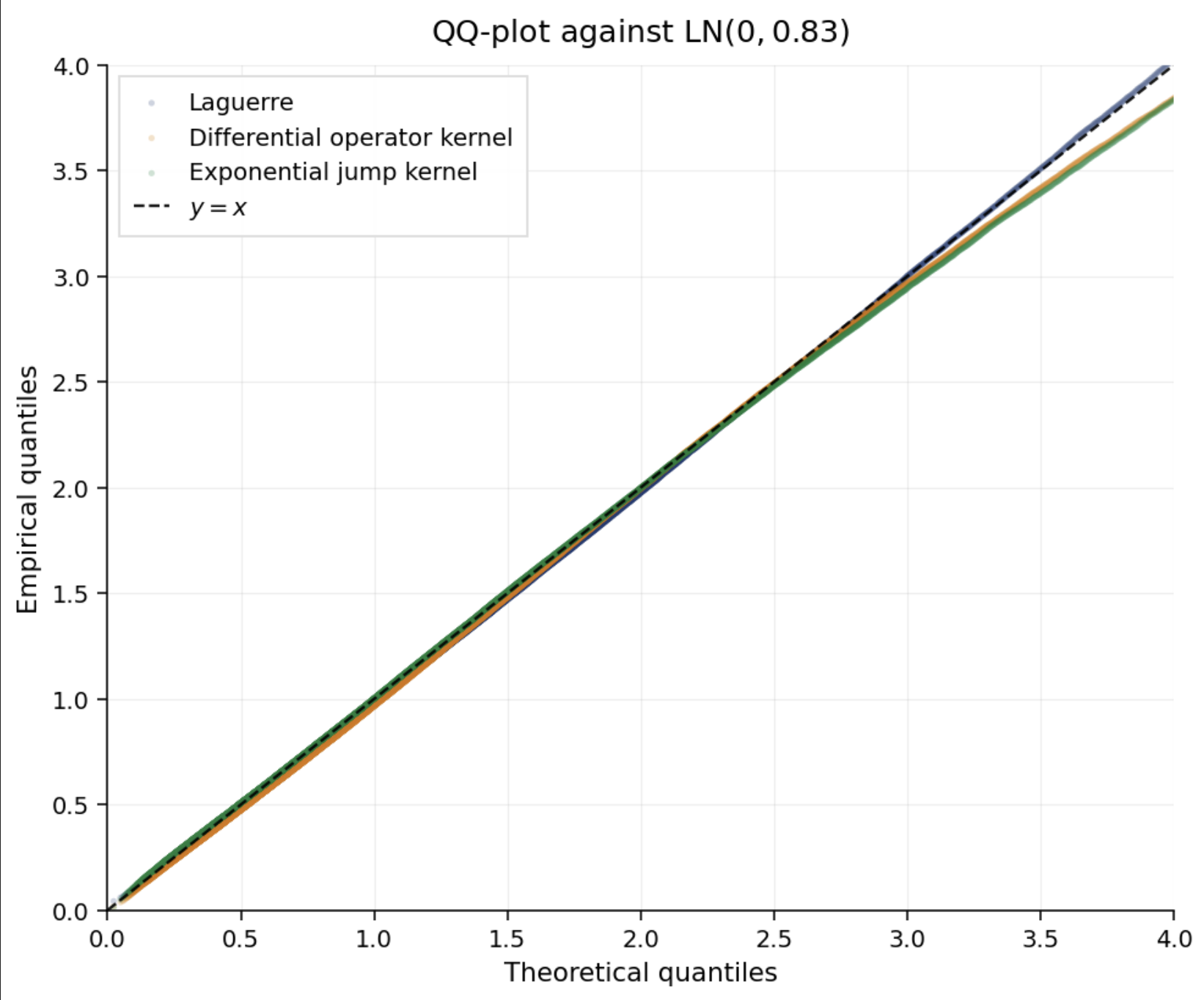} \\
    \includegraphics[width=0.45\linewidth]{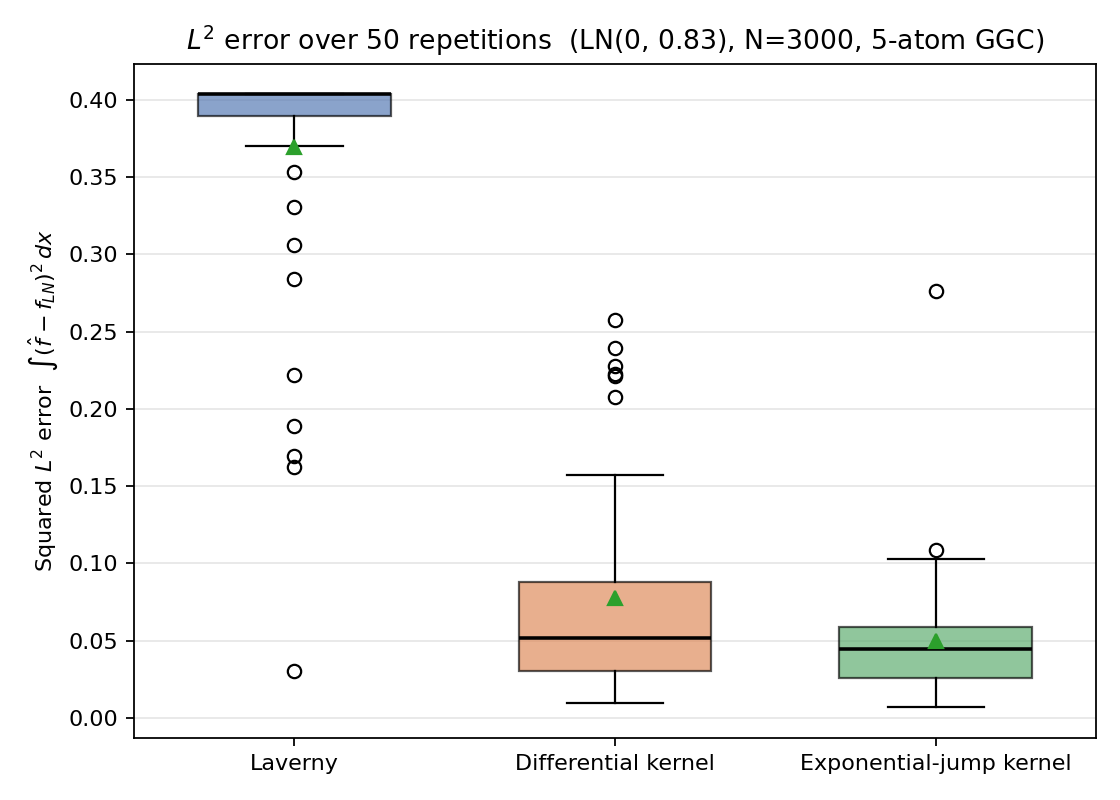} &
    \includegraphics[width=0.45\linewidth]{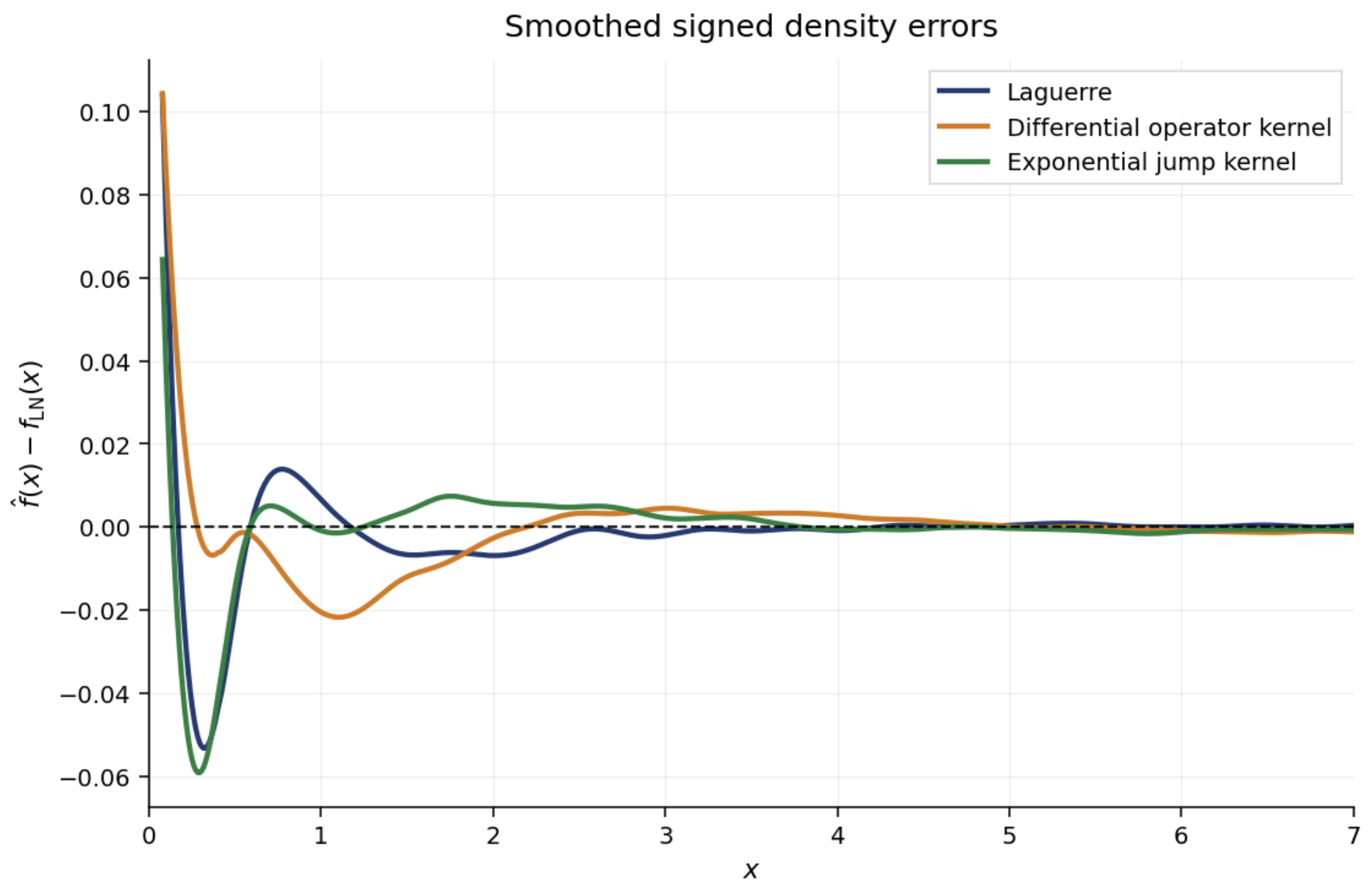} \\
  \end{tabular}
  \caption{Comparison of the Laguerre projection method
  \cite{LavernyEtAl2021} with the MSDE based on the
  \emph{differential operator kernel} and on the
  \emph{exponential-jump kernel}, for a five-atom GGC fit to the
  $\mathrm{LN}(0, 0.83)$ target with sample size $N = 3000$. Top-left:
  estimated densities versus target. Top-right: QQ-plots against
  the target. Bottom-right: smoothed signed density errors
  $\widehat{f}(x) - f_{\mathrm{LN}}(x)$. Bottom-left: boxplot of the squared
  $L^{2}$ density errors.}
  \label{fig:laverny-comparison}
\end{figure}

\paragraph{Numerical results.}
The four panels of Figure~\ref{fig:laverny-comparison} report the
estimated densities versus the target (top-left), the QQ-plots
against the target (top-right), the smoothed signed density errors
$\widehat{f}(x) - f_{\mathrm{LN}}(x)$ (bottom-left), and the squared
$L^{2}$ density errors (bottom-right) for each of the three methods.
The three densities are visually very close to the target on
$[0, 7]$, and the QQ-plots align with the diagonal $y = x$ on
$[0, 4]$. The squared $L^{2}$ error boxplots show that the
\emph{exponential-jump kernel} achieves the smallest average error, closely followed by the \emph{differential operator kernel}, with the
Laguerre method slightly behind. The
KSD-based methods therefore outperform Laguerre on the bulk of the
distribution. The smoothed signed errors (bottom-left) and the
right-end of the QQ-plot, however, show that the Laguerre estimator
tracks the target slightly better in the upper tail
($x \gtrsim 3$): the lognormal tail is captured more accurately by
the Laguerre basis than by either Stein kernel, which slightly
underestimates the upper tail. A possible explanation is that the KSD is a expectation-based criterion, so discrepancies in tail receive less weight than those in the bulk.

\subsection{Three-dimensional application}

We now illustrate the proposed estimation procedure on a genuinely multivariate example. The
dataset is simulated in dimension \(d=3\). Each marginal distribution is Lognormal$(0,1)$, and the
dependence structure is introduced through a Gumbel copula with parameter \(\rho=1.7\). We fit a finite multivariate gamma convolution model of
\(
\mathcal{G}_{3,30}
\).  The estimation is made on $\Theta$.
\begin{figure}[h]
    \centering
    \includegraphics[width=\textwidth]{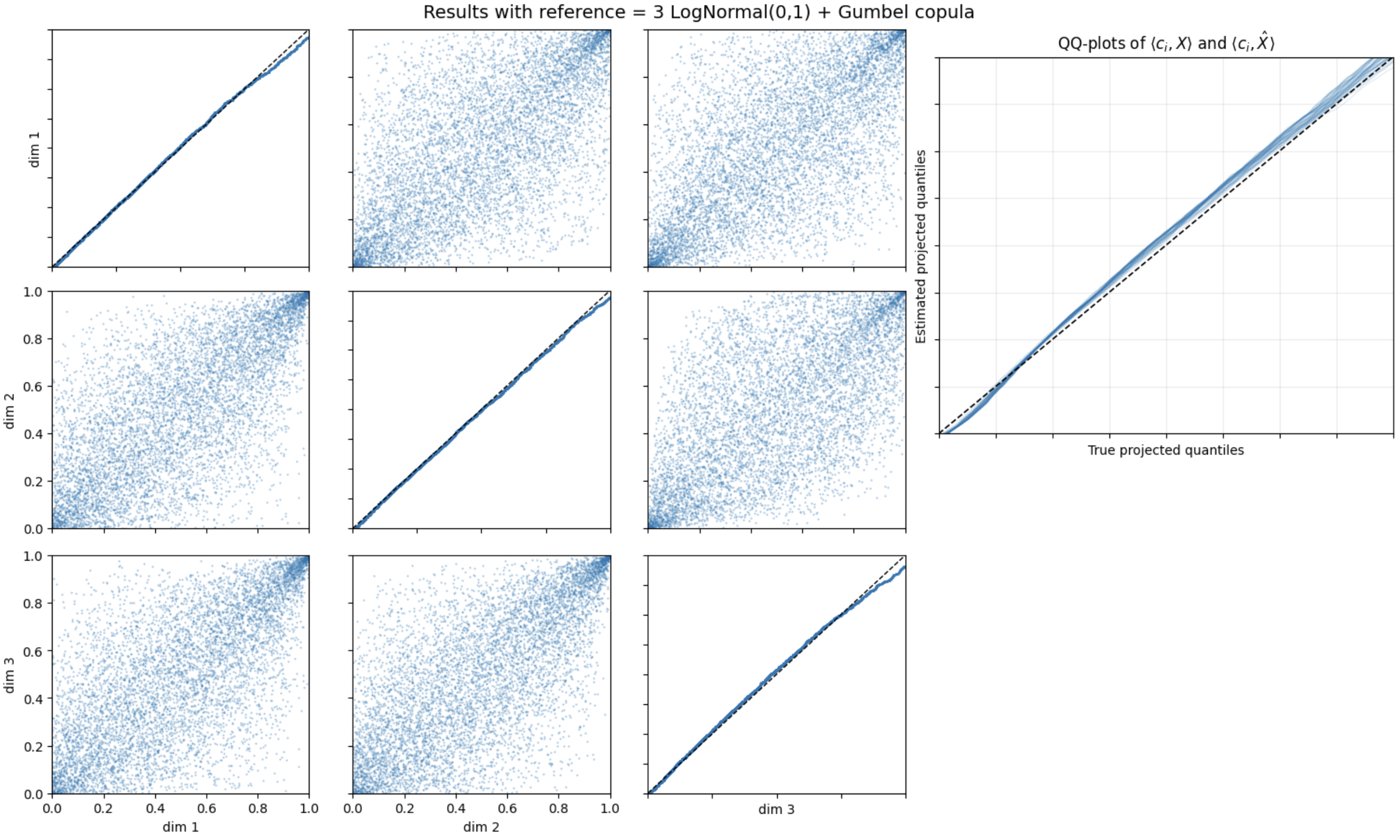}
    \caption{Comparison between the simulated three-dimensional lognormal sample with Gumbel copula dependence and the fitted \(G_{3,30}\) model. Left: pairwise scatter plots and marginal QQ-plots. Right: projected QQ-plots comparing \(\langle c_i,X\rangle\) and \(\langle c_i,\widehat X\rangle\) along 50  random projections directions $c_i$.}
    \label{fig:application_3d}
\end{figure}

Figure~\ref{fig:application_3d} compares the simulated sample \(X\) with the fitted model
\(\widehat X\). The matrix of pairwise scatter plots shows that the fitted distribution reproduces
both the marginal shapes and the dependence structure of the data. In particular, the increasing
dependence induced by the Gumbel copula is clearly recovered across the three pairs of coordinates.
The diagonal QQ-plots indicate that the marginal distributions are well approximated, with only
minor deviations in the upper tail.

The projected QQ-plot provides a complementary multivariate diagnostic by comparing the
quantiles of \(\langle c_i,X\rangle\) and \(\langle c_i,\widehat X\rangle\) for 50 projection directions
\(c_i\). The curves remain close to the diagonal, which confirms that the fitted \(G_{3,30}\) model
captures the distribution not only marginally, but also through its multivariate projections.

\subsection{Execution time and scalability}

We finally investigate the computational behaviour of the proposed estimation procedure when
the dimension and the number of atoms increase. The benchmark is performed for several values
of the dimension \(d\) and of the number of atoms \(n_{\mathrm{atoms}}\). For a model in
\(G_{d,n_{\mathrm{atoms}}}\), the total number of parameters is \(
n_{\mathrm{atoms}}(d+1).
\)

\begin{figure}[h]
    \centering
    \includegraphics[width=1\textwidth]{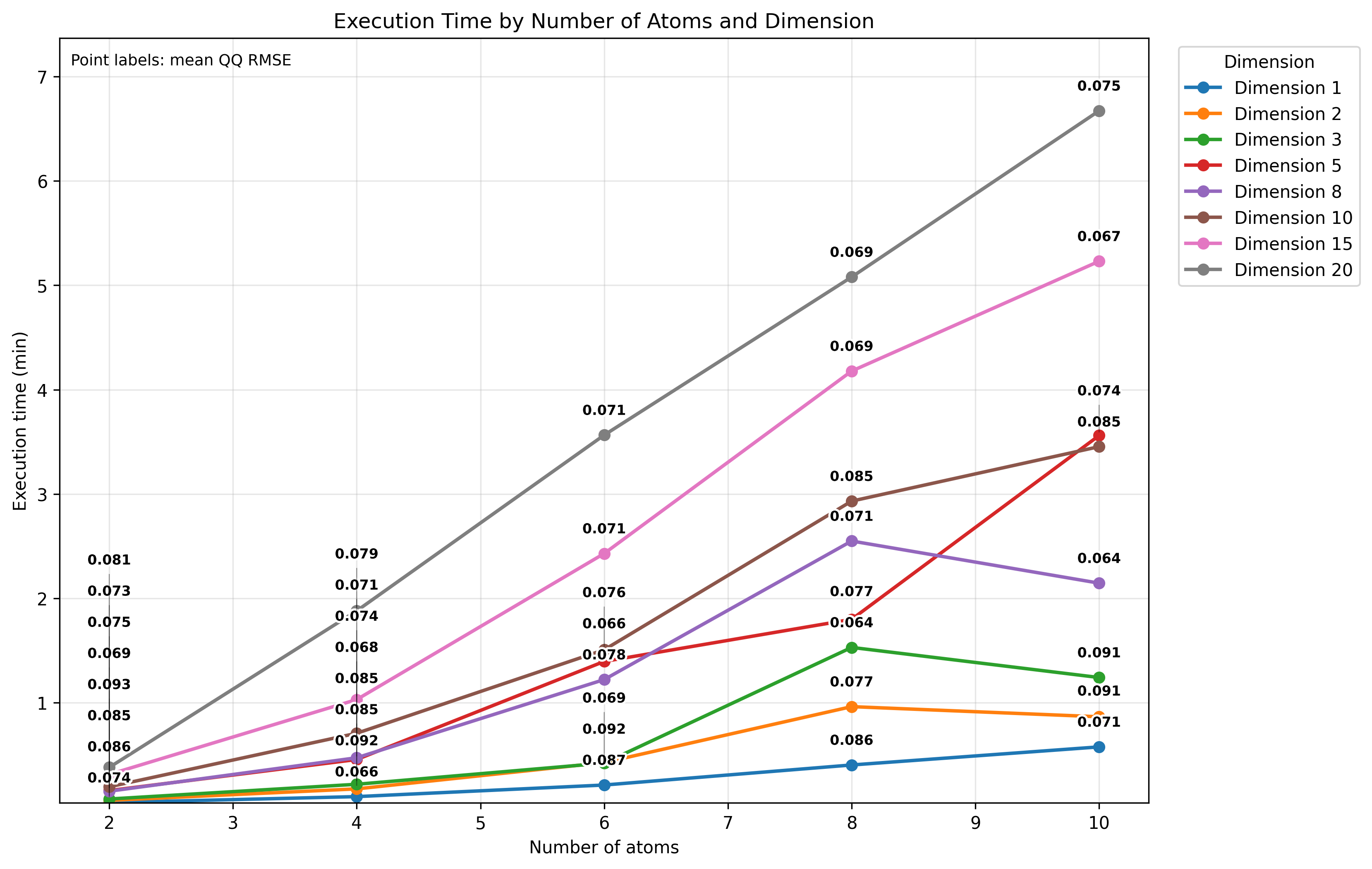}
    \caption{Average execution time as a function of the dimension \(d\), for several numbers of atoms for $N=3000$. For each point, the associated QQ-RMSE on 10 i.i.d samples.}
    \label{fig:time_dimension}
\end{figure}
\paragraph{Estimation protocol.} Each point of Figure~\ref{fig:time_dimension} is
obtained as follows. For a fixed dimension $d$ and a fixed number of atoms
$n_{\mathrm{atoms}}$, we draw an i.i.d.\ sample from a target law
$\mathcal{G}_{d,n_{\mathrm{atoms}}}(\alpha_0,s_0)$ and we fit it with a model taken
from the \emph{same} class $\mathcal{G}_{d,n_{\mathrm{atoms}}}$, i.e.\ a finite
gamma convolution with the same dimension and the same number of atoms as the
data-generating distribution. The whole estimation is repeated on $10$ independent
samples, and each point of the figure reports the average, over these $10$
repetitions, of both the execution time of the fit and of the quality indicator
described below. 

\paragraph{Quality indicator: the marginal QQ-RMSE.} To monitor the quality of
the fit we use a normalized marginal \emph{QQ-RMSE} rather than a discrepancy on
the parameters themselves.The reason is twofold. First, the parametrisation
$(\alpha,s)\mapsto \mathbb{P}_{(\alpha,s)}$ is not identifiable on the unconstrained set
(invariance under permutation and atom fusion, Section~\ref{sec:ggc-identifiability}),
so a coordinate-wise error $\|\widehat{\theta}-\theta_0\|$ carries no intrinsic
meaning. Second, any aggregate distance
on the parameters, as well as the final KSD value -- which contains the term
$\langle x,y\rangle$ -- grows mechanically with the dimension $d$, and would
confound a genuine loss of accuracy with a mere increase in dimension. We
therefore measure a distributional quality indicator. The resulting
QQ-RMSE is the quantitative counterpart of the QQ-plots used as a diagnostic in
Figures~\ref{fig:laverny-comparison} and~\ref{fig:application_3d}: it is dimensionless,
and a small value indicates a good fit.

Explicitly, for a configuration $(d,n_{\mathrm{atoms}})$, let $X$ denote the data
sample and $\widehat{X}$ a sample drawn from the fitted model
$\mathcal{G}_{d,n_{\mathrm{atoms}}}(\widehat{\theta}_N)$. Fix the $Q=99$
equally-spaced quantile levels $\tau_1<\dots<\tau_Q$ in $[0.01,0.99]$, and for each
marginal $k\in\{1,\dots,d\}$ denote by $\widehat{q}^{\,X}_k(\tau)$ and
$\widehat{q}^{\,\widehat{X}}_k(\tau)$ the empirical quantile functions of the
$k$-th marginals of $X$ and $\widehat{X}$, and by $\widehat{\sigma}_k$ the
empirical standard deviation of the $k$-th marginal of $X$. The QQ-RMSE of this fit
is
\begin{equation}
  \mathrm{QQ\text{-}RMSE}
  \;=\;
  \frac{1}{d}\sum_{k=1}^{d}
  \sqrt{\frac{1}{Q}\sum_{i=1}^{Q}
    \left(\frac{\widehat{q}^{\,X}_k(\tau_i)-\widehat{q}^{\,\widehat{X}}_k(\tau_i)}
               {\widehat{\sigma}_k}\right)^{\!2}},
  \label{eq:qq-rmse}
\end{equation}
and the value reported at each point of Figure~\ref{fig:time_dimension} is the
average of~\eqref{eq:qq-rmse} over the $10$ repetitions.  \cite{delbarrio1999wasserstein} has shown that QQ-RMSE can be interpreted as an empirical Wassertein distance. 

\paragraph{Results.} Figure~\ref{fig:time_dimension} shows that the execution time
grows essentially linearly with the total number of parameters
$n_{\mathrm{atoms}}(d+1)$. Crucially, the QQ-RMSE stays confined to a narrow band (of order
$0.06$--$0.09$) and remains essentially flat in both $d$ and $n_{\mathrm{atoms}}$:
the statistical performance of the estimator is \emph{stable}, only the
computational cost grows. This confirms that the integral Stein kernel provides a
practically tractable and statistically robust approach to multivariate GGC
estimation, even beyond low-dimensional examples.

\subsection*{Statement on AI use}

All mathematical arguments, results, and proofs in this paper were developed and verified by the authors without the use of artificial intelligence. AI tools were used during the writing process, primarily to suggest occasional improvements to the English syntax and phrasing, and to assist in locating a small number of references. The models used were Fable~5, ChatGPT~5.5 and  Mistral's Vibe (with CNRS enterprise subscription). The suggestions produced by these tools were systematically reviewed and, where appropriate, rewritten and verified by the authors. In particular, no AI system was used to formulate mathematical statements or construct proofs. This
statement is included in accordance with the principles of the
\href{https://leidendeclaration.ai/}{Leiden Declaration on Artificial Intelligence and Mathematics}

%=====================================================================
%  Bibliography
% =====================================================================
\bibliography{references}

\begin{thebibliography}{36}
\providecommand{\natexlab}[1]{#1}
\providecommand{\url}[1]{\texttt{#1}}
\expandafter\ifx\csname urlstyle\endcsname\relax
  \providecommand{\doi}[1]{doi: #1}\else
  \providecommand{\doi}{doi: \begingroup \urlstyle{rm}\Url}\fi

\bibitem[\'Alvarez et~al.(2012)\'Alvarez, Rosasco, and
  Lawrence]{AlvarezRosascoLawrence2012}
Mauricio~A. \'Alvarez, Lorenzo Rosasco, and Neil~D. Lawrence.
\newblock Kernels for vector-valued functions: a review.
\newblock \emph{Foundations and Trends in Machine Learning}, 4\penalty0
  (3):\penalty0 195--266, 2012.
\newblock \doi{10.1561/2200000036}.

\bibitem[Aronszajn(1950)]{Aronszajn1950}
Nachman Aronszajn.
\newblock Theory of reproducing kernels.
\newblock \emph{Transactions of the American Mathematical Society}, 68\penalty0
  (3):\penalty0 337--404, 1950.
\newblock \doi{10.2307/1990404}.

\bibitem[Arras et~al.(2020)Arras, Azmoodeh, Poly, and
  Swan]{ArrasAzmoodehPolySwan2017}
Benjamin Arras, Ehsan Azmoodeh, Guillaume Poly, and Yvik Swan.
\newblock Stein characterizations for linear combinations of gamma random
  variables.
\newblock \emph{Brazilian Journal of Probability and Statistics}, 34\penalty0
  (2):\penalty0 394--413, 2020.
\newblock \doi{10.1214/18-BJPS420}.

\bibitem[Barndorff-Nielsen et~al.(2006)Barndorff-Nielsen, Maejima, and
  Sato]{BarndorffNielsenMaejimaSato2006}
Ole~E. Barndorff-Nielsen, Makoto Maejima, and Ken-iti Sato.
\newblock Some classes of multivariate infinitely divisible distributions
  admitting stochastic integral representations.
\newblock \emph{Bernoulli}, 12\penalty0 (1):\penalty0 1--33, 2006.

\bibitem[Barp et~al.(2019)Barp, Briol, Duncan, Girolami, and
  Mackey]{BarpEtAl2019}
Alessandro Barp, Fran\c{c}ois-Xavier Briol, Andrew~B. Duncan, Mark Girolami,
  and Lester Mackey.
\newblock Minimum {S}tein discrepancy estimators.
\newblock \emph{Advances in Neural Information Processing Systems},
  32:\penalty0 12964--12976, 2019.

\bibitem[Bondesson(1992)]{Bondesson1992}
Lennart Bondesson.
\newblock \emph{Generalized Gamma Convolutions and Related Classes of
  Distributions and Densities}, volume~76 of \emph{Lecture Notes in
  Statistics}.
\newblock Springer-Verlag, New York, 1992.
\newblock \doi{10.1007/978-1-4612-2948-3}.

\bibitem[Bondesson(2009)]{Bondesson2009}
Lennart Bondesson.
\newblock On univariate and bivariate generalized gamma convolutions.
\newblock \emph{Journal of Statistical Planning and Inference}, 139\penalty0
  (11):\penalty0 3759--3765, 2009.
\newblock \doi{10.1016/j.jspi.2009.05.015}.

\bibitem[Carmeli et~al.(2006)Carmeli, De~Vito, and
  Toigo]{CarmeliDeVitoToigo2006}
Claudio Carmeli, Ernesto De~Vito, and Alessandro Toigo.
\newblock Vector valued reproducing kernel {H}ilbert spaces of integrable
  functions and {M}ercer theorem.
\newblock \emph{Analysis and Applications}, 4\penalty0 (4):\penalty0 377--408,
  2006.
\newblock \doi{10.1142/S0219530506000838}.

\bibitem[Chwialkowski et~al.(2016)Chwialkowski, Strathmann, and
  Gretton]{ChwialkowskiStrathmannGretton2016}
Kacper Chwialkowski, Heiko Strathmann, and Arthur Gretton.
\newblock A kernel test of goodness of fit.
\newblock In \emph{Proceedings of the 33rd International Conference on Machine
  Learning (ICML)}, volume~48 of \emph{Proceedings of Machine Learning
  Research}, pages 2606--2615. PMLR, 2016.

\bibitem[Davis and Rabinowitz(1984)]{DavisRabinowitz1984}
Philip~J. Davis and Philip Rabinowitz.
\newblock \emph{Methods of Numerical Integration}.
\newblock Academic Press, Orlando, FL, 2nd edition, 1984.

\bibitem[Del~Barrio et~al.(1999)Del~Barrio, Cuesta-Albertos, Matr\'an, and
  Rodr\'iguez-Rodr\'iguez]{delbarrio1999wasserstein}
Eustasio Del~Barrio, Juan~A. Cuesta-Albertos, Carlos Matr\'an, and Jes\'us~M.
  Rodr\'iguez-Rodr\'iguez.
\newblock Tests of goodness of fit based on the {$L_2$}-{W}asserstein distance.
\newblock \emph{The Annals of Statistics}, 27\penalty0 (4):\penalty0
  1230--1239, 1999.
\newblock \doi{10.1214/aos/1017938923}.

\bibitem[Furman et~al.(2017)Furman, Hackmann, and
  Kuznetsov]{FurmanHackmannKuznetsov2017}
Edward Furman, Daniel Hackmann, and Alexey Kuznetsov.
\newblock On log-normal convolutions: an analytical-numerical method with
  applications to economic capital determination.
\newblock \emph{SSRN Electronic Journal}, 2017.
\newblock \doi{10.2139/ssrn.2924226}.

\bibitem[Gaunt et~al.(2023)Gaunt, Mijoule, and Swan]{GauntMijouleSwan2023}
Robert~E. Gaunt, Guillaume Mijoule, and Yvik Swan.
\newblock An algebra of {S}tein operators.
\newblock \emph{Journal of Mathematical Analysis and Applications},
  521\penalty0 (1):\penalty0 126907, 2023.
\newblock \doi{10.1016/j.jmaa.2022.126907}.

\bibitem[Gorham and Mackey(2017)]{GorhamMackey2017}
Jackson Gorham and Lester Mackey.
\newblock Measuring sample quality with kernels.
\newblock In \emph{Proceedings of the 34th International Conference on Machine
  Learning (ICML)}, volume~70 of \emph{Proceedings of Machine Learning
  Research}, pages 1292--1301. PMLR, 2017.

\bibitem[Gretton et~al.(2012)Gretton, Borgwardt, Rasch, Sch\"olkopf, and
  Smola]{GrettonEtAl2012}
Arthur Gretton, Karsten~M. Borgwardt, Malte~J. Rasch, Bernhard Sch\"olkopf, and
  Alexander Smola.
\newblock A kernel two-sample test.
\newblock \emph{Journal of Machine Learning Research}, 13:\penalty0 723--773,
  2012.

\bibitem[Hoeffding(1948)]{Hoeffding1948}
Wassily Hoeffding.
\newblock A class of statistics with asymptotically normal distribution.
\newblock \emph{The Annals of Mathematical Statistics}, 19\penalty0
  (3):\penalty0 293--325, 1948.
\newblock \doi{10.1214/aoms/1177730196}.

\bibitem[Holmquist(1996)]{Holmquist1996}
Bj\"orn Holmquist.
\newblock The {$d$}-variate vector {H}ermite polynomial of order {$N$}.
\newblock \emph{Linear Algebra and its Applications}, 237--238:\penalty0
  155--190, 1996.
\newblock \doi{10.1016/0024-3795(95)00595-1}.

\bibitem[Laverny(2022)]{Laverny2022Projections}
Oskar Laverny.
\newblock Estimation of high dimensional gamma convolutions through random
  projections.
\newblock 2022.
\newblock arXiv:2203.13741.

\bibitem[Laverny et~al.(2021)Laverny, Masiello, Maume-Deschamps, and
  Rulli\`ere]{LavernyEtAl2021}
Oskar Laverny, Esterina Masiello, V\'eronique Maume-Deschamps, and Didier
  Rulli\`ere.
\newblock Estimation of multivariate generalized gamma convolutions through
  {L}aguerre expansions.
\newblock \emph{Electronic Journal of Statistics}, 15\penalty0 (2):\penalty0
  5158--5202, 2021.
\newblock \doi{10.1214/21-EJS1918}.

\bibitem[Laverny et~al.(2022)Laverny, Ferriero, and
  Nisipasu]{LavernyFerrieroNisipasu2022}
Oskar Laverny, Alessandro Ferriero, and Ecaterina Nisipasu.
\newblock Parametric divisibility of stochastic losses.
\newblock SCOR Paper~43, SCOR SE, 2022.

\bibitem[Liu et~al.(2016)Liu, Lee, and Jordan]{LiuLeeJordan2016}
Qiang Liu, Jason~D. Lee, and Michael~I. Jordan.
\newblock A kernelized {S}tein discrepancy for goodness-of-fit tests.
\newblock In \emph{Proceedings of the 33rd International Conference on Machine
  Learning (ICML)}, volume~48 of \emph{Proceedings of Machine Learning
  Research}, pages 276--284. PMLR, 2016.

\bibitem[Micchelli and Pontil(2005)]{MicchelliPontil2005}
Charles~A. Micchelli and Massimiliano Pontil.
\newblock On learning vector-valued functions.
\newblock \emph{Neural Computation}, 17\penalty0 (1):\penalty0 177--204, 2005.
\newblock \doi{10.1162/0899766052530802}.

\bibitem[Mijoule et~al.(2023)Mijoule, Rai\v{c}, Reinert, and
  Swan]{MijouleRaicReinertSwan2023}
Guillaume Mijoule, Martin Rai\v{c}, Gesine Reinert, and Yvik Swan.
\newblock {S}tein's density method for multivariate continuous distributions.
\newblock \emph{Electronic Journal of Probability}, 28:\penalty0 1--40, 2023.
\newblock \doi{10.1214/23-EJP983}.

\bibitem[Miles(2023)]{Miles2023Thesis}
Justin Miles.
\newblock \emph{On {L}aplace transforms, generalized gamma convolutions, and
  their applications in risk aggregation}.
\newblock Ph.{D}.\ thesis, York University, Toronto, Ontario, 2023.

\bibitem[Miles et~al.(2021)Miles, Furman, and
  Kuznetsov]{MilesFurmanKuznetsov2021}
Justin Miles, Edward Furman, and Alexey Kuznetsov.
\newblock Risk aggregation: a general approach via the class of generalized
  gamma convolutions.
\newblock \emph{Variance}, 13\penalty0 (2):\penalty0 233--249, 2021.

\bibitem[Muandet et~al.(2017)Muandet, Fukumizu, Sriperumbudur, and
  Sch\"olkopf]{MuandetEtAl2017}
Krikamol Muandet, Kenji Fukumizu, Bharath Sriperumbudur, and Bernhard
  Sch\"olkopf.
\newblock Kernel mean embedding of distributions: a review and beyond.
\newblock \emph{Foundations and Trends in Machine Learning}, 10\penalty0
  (1--2):\penalty0 1--141, 2017.
\newblock \doi{10.1561/2200000060}.

\bibitem[Sayit(2024)]{Sayit2024}
Hasanjan Sayit.
\newblock Weak convergence implies convergence in mean within {GGC}.
\newblock 2024.
\newblock arXiv:2407.15105.

\bibitem[Sch\"olkopf and Smola(2002)]{ScholkopfSmola2002}
Bernhard Sch\"olkopf and Alexander~J. Smola.
\newblock \emph{Learning with Kernels: Support Vector Machines, Regularization,
  Optimization, and Beyond}.
\newblock MIT Press, Cambridge, MA, 2002.

\bibitem[Smola et~al.(2007)Smola, Gretton, Song, and
  Sch\"olkopf]{SmolaEtAl2007}
Alexander~J. Smola, Arthur Gretton, Le~Song, and Bernhard Sch\"olkopf.
\newblock A {H}ilbert space embedding for distributions.
\newblock In \emph{Algorithmic Learning Theory (ALT 2007)}, volume 4754 of
  \emph{Lecture Notes in Computer Science}, pages 13--31. Springer, 2007.
\newblock \doi{10.1007/978-3-540-75225-7_5}.

\bibitem[Sriperumbudur et~al.(2010)Sriperumbudur, Gretton, Fukumizu,
  Sch\"olkopf, and Lanckriet]{Sriperumbudur2010}
Bharath~K. Sriperumbudur, Arthur Gretton, Kenji Fukumizu, Bernhard Sch\"olkopf,
  and Gert R.~G. Lanckriet.
\newblock Hilbert space embeddings and metrics on probability measures.
\newblock \emph{Journal of Machine Learning Research}, 11:\penalty0 1517--1561,
  2010.

\bibitem[Stein(1972)]{Stein1972}
Charles Stein.
\newblock A bound for the error in the normal approximation to the distribution
  of a sum of dependent random variables.
\newblock In \emph{Proceedings of the Sixth Berkeley Symposium on Mathematical
  Statistics and Probability, Volume~2}, pages 583--602, Berkeley, 1972.
  University of California Press.

\bibitem[Stein(1986)]{Stein1986}
Charles Stein.
\newblock \emph{Approximate Computation of Expectations}, volume~7 of
  \emph{Lecture Notes--Monograph Series}.
\newblock Institute of Mathematical Statistics, Hayward, CA, 1986.

\bibitem[Steinwart and Christmann(2008)]{SteinwartChristmann2008}
Ingo Steinwart and Andreas Christmann.
\newblock \emph{Support Vector Machines}.
\newblock Information Science and Statistics. Springer, New York, 2008.
\newblock \doi{10.1007/978-0-387-77242-4}.

\bibitem[Thorin(1977{\natexlab{a}})]{Thorin1977Lognormal}
Olof Thorin.
\newblock On the infinite divisibility of the lognormal distribution.
\newblock \emph{Scandinavian Actuarial Journal}, 1977\penalty0 (3):\penalty0
  121--148, 1977{\natexlab{a}}.
\newblock \doi{10.1080/03461238.1977.10405635}.

\bibitem[Thorin(1977{\natexlab{b}})]{Thorin1977Pareto}
Olof Thorin.
\newblock On the infinite divisibility of the {P}areto distribution.
\newblock \emph{Scandinavian Actuarial Journal}, 1977\penalty0 (1):\penalty0
  31--40, 1977{\natexlab{b}}.
\newblock \doi{10.1080/03461238.1977.10405623}.

\bibitem[Withers(2000)]{Withers2000}
Christopher~S. Withers.
\newblock A simple expression for the multivariate {H}ermite polynomials.
\newblock \emph{Statistics \& Probability Letters}, 47\penalty0 (2):\penalty0
  165--169, 2000.
\newblock \doi{10.1016/S0167-7152(99)00153-4}.

\end{thebibliography}

% ---------------------------------------------------------------------
\appendix
\refstepcounter{section}
\renewcommand{\thesubsection}{\thesection-\arabic{subsection}}
\section*{Appendix}

This appendix contains some more technical results.
% ---------------------------------------------------------------------
\subsection{Proof of Proposition~\ref{prop:equivalence-gdn} (equivalence on $\mathcal{G}_{d,n}$)}
\label{app:proof-equivalence-gdn}
% ---------------------------------------------------------------------
 
We start from the integral form~\eqref{eq:stein-ggc-discrete} and show
that it can be rewritten as the polynomial-coefficient
form~\eqref{eq:stein-arras-discrete}. Fix $j \in \{1,\dots,n\}$ and
consider
$ \displaystyle I_{j}(x) \coloneqq \int_{0}^{\infty}\! e^{-z}\,\ip{s_{j}}{f(x + z s_{j})}\,\mathrm{d}z$.
Using the Taylor expansion of $f$ at $x$,
\[
  f(x + z s_{j})
  \;=\; \sum_{m=0}^{\infty}\!\frac{z^{m}}{m!}\,(\ip{s_{j}}{\nabla_{x}})^{m} f(x)
  \;=\; \sum_{m=0}^{\infty}\!\frac{z^{m}}{m!}\,(D_{j}^{*})^{m} f(x),
\]
with $D_{j}^{*} \coloneqq \ip{s_{j}}{\nabla_{x}}$. The exchange of
$\displaystyle \int_{0}^{\infty}\! e^{-z}\dots\mathrm{d}z$ and the Taylor sum is by
justified the identity
$ \displaystyle \int_{0}^{\infty}\! z^{m} e^{-z}\,\mathrm{d}z = m!$. Hence
\[
  I_{j}(x)
  \;=\; \sum_{m=0}^{\infty}\!\ip{s_{j}}{(D_{j}^{*})^{m} f(x)}
  \;=\; \ip{s_{j}}{(I - D_{j}^{*})^{-1} f(x)}.
\]
Substituting in~\eqref{eq:stein-ggc-discrete},
\begin{equation}
  (\mathcal{T}^{\mathrm{GGC}}_{X} f)(x)
  \;=\; \ip{x}{f(x)} \,-\, \sum_{j=1}^{n}\!\alpha_{j}\,\ip{s_{j}}{(I - D_{j}^{*})^{-1} f(x)}.
  \label{eq:proof-eq-1}
\end{equation}
 
Multiplying both sides of~\eqref{eq:proof-eq-1} by the formal product
operator $\prod_{j=1}^{n}(I - D_{j}^{*})$ and using
$\prod_{j=1}^{n}(I - D_{j}^{*}) \cdot (I - D_{j}^{*})^{-1} = \prod_{k \neq j}(I - D_{k}^{*})$,
\begin{equation}
  \prod_{j=1}^{n}(I - D_{j}^{*})\,(\mathcal{T}^{\mathrm{GGC}}_{X} f)(x)
  \;=\; \Bigl\langle x,\,{\textstyle\prod_{j=1}^{n}}(I - D_{j}^{*})\, f(x)\Bigr\rangle
       - \sum_{j=1}^{n}\!\alpha_{j}\,\Bigl\langle s_{j},\,{\textstyle\prod_{k \neq j}}(I - D_{k}^{*})\, f(x)\Bigr\rangle,
  \label{eq:proof-eq-2}
\end{equation}
which is exactly $\ip{x}{A(D) f(x)} - \ip{B(D)}{f(x)}$ with $A$ and $B$
given by~\eqref{eq:AB-discrete} -- i.e.\ the Arras polynomial form of
Proposition~\ref{lem:arras-multi}.
 
Expanding the products $\prod_{j=1}^{n}(I - D_{j}^{*})$ and
$\prod_{k \neq j}(I - D_{k}^{*})$ via the elementary symmetric
polynomials,
\[
  \prod_{j=1}^{n}(I - D_{j}^{*})
  \;=\; \sum_{\ell=0}^{n}(-1)^{\ell}\, e_{\ell}(D^{*}),
  \qquad
  \prod_{k \neq j}(I - D_{k}^{*})
  \;=\; \sum_{\ell=0}^{n-1}(-1)^{\ell}\, e_{\ell}((D^{*})_{j}),
\]
substituting in~\eqref{eq:proof-eq-2}, separating the orders $\ell = 0$,
$\ell = 1, \dots, n-1$ and $\ell = n$, and rearranging (we add and subtract
$\sum_{j=1}^{n}\alpha_{j}\,\ip{s_{j}}{e_{\ell}(D^{*})f(x)}$ at each
intermediate order), one obtains the explicit
form~\eqref{eq:stein-arras-discrete}. Both
$\mathcal{T}^{\mathrm{GGC}}_{X}$ and $\mathcal{T}^{\mathrm{Arras}}_{X}$ thus
coincide on $\Sclass(\R^{d}, \R^{d})$. \qed
\paragraph{Concise form of the polynomial stein kernel (RBF).}
Equation~\eqref{eq:stein-arras-discrete} can be rewritten as
\begin{equation}
  (\mathcal{T}^{\mathrm{Arras}}_{X} f)(x)
  \;=\; \sum_{l=0}^{n}(-1)^{l}\!\left[
      \ip{x}{e_{l}(D^{*}) f(x)}
      - \sum_{j=1}^{n}\alpha_{j}\,\bigl\langle s_{j},\,(2 e_{l}(D^{*}) - e_{l}((D^{*})_{j}))\, f(x)\bigr\rangle
  \right],
  \label{eq:poly-op-compact}
\end{equation}
with $D_{j}^{*} = \ip{s_{j}}{\nabla_{x}}$, $D^{*} = (D_{1}^{*}, \dots, D_{n}^{*})$, the convention
$e_{l}(D^{*}) = 0$ for $l > n$, and $e_{l}((D^{*})_{j})$ the $\ell$-th elementary symmetric
polynomial in $D^{*}$ with the $j$-th entry omitted. Following~\citet{Holmquist1996,Withers2000},
the multivariate Hermite polynomial of multi-index $\beta \in \N^{n}$ attached to $v \in \R^{n}$
and $\Omega \in \R^{n \times n}$,
\begin{equation*}
  \mathbb{H}_{\beta}(v, \Omega)
  \;\coloneqq\;
  \E\!\left[\,\prod_{j=1}^{n} (v_{j} + \im\, Z_{j})^{\beta_{j}}\,\right],
  \qquad Z \sim \mathcal{N}(0_{n}, \Omega),
\end{equation*}
has moment-generating function
\begin{equation}
  \sum_{\beta \in \N^{n}} \frac{t^{\beta}}{\beta!}\,\mathbb{H}_{\beta}(v, \Omega)
  \;=\; \exp\!\Bigl(\,t^{\top} v - \tfrac{1}{2}\, t^{\top}\Omega\, t\,\Bigr),
  \qquad t \in \R^{n},
  \label{eq:hermite-mgf}
\end{equation}
an immediate consequence of $\E[e^{\im\,\ip{t}{Z}}] = e^{-\frac{1}{2} t^{\top}\Omega t}$
($t^{\beta} = t_{1}^{\beta_{1}}\cdots t_{n}^{\beta_{n}}$, $\beta! = \beta_{1}!\cdots\beta_{n}!$).

\paragraph{Identification $D_{\beta}^{*} k(x,y) = \mathbb{H}_{\beta}(v,\Omega)\, k(x,y)$.}
For a shift $h = \sum_{j} t_{j}\, s_{j}$, the Taylor expansion of $k(\cdot, y)$ at $x$ together with
$(h \cdot \nabla_{x})^{m} = (\sum_{j} t_{j} D_{j}^{*})^{m}$ gives
\begin{equation}
  k(x + h,\, y)
  \;=\; \sum_{\beta \in \N^{n}}\!\frac{t^{\beta}}{\beta!}\,
        \prod_{j=1}^{n}\bigl(\!\ip{s_{j}}{\nabla_{x}}\!\bigr)^{\beta_{j}} k(x, y),
  \label{eq:taylor-shift}
\end{equation}
while the Gaussian-conjugate identity
$\|(x - y) + h\|^{2} = \|x - y\|^{2} + 2\ip{x - y}{h} + \|h\|^{2}$ yields
\begin{equation*}
  k(x + h,\, y) \;=\; k(x, y)\,\exp\!\Bigl(\,t^{\top} v - \tfrac{1}{2}\,t^{\top} \Omega\, t\,\Bigr),
  \qquad
  v_{j} \coloneqq -\frac{\ip{s_{j}}{x - y}}{\sigma^{2}},\;
  \Omega_{ij} \coloneqq -\frac{\ip{s_{i}}{s_{j}}}{\sigma^{2}}.
\end{equation*}
Matching~\eqref{eq:taylor-shift} with $k(x,y)$ times~\eqref{eq:hermite-mgf} term by term in $\beta$,
\begin{equation}
  D_{\beta}^{*}\, k(x, y)
  \;=\;
  \mathbb{H}_{\beta}(v, \Omega)\, k(x, y).
  \label{eq:Dstar-Hermite}
\end{equation}

\paragraph{Action of $e_{l}(D^{*})$ and the full Stein kernel.}
Summing~\eqref{eq:Dstar-Hermite} over $\beta \in \{0,1\}^{n}$ with $|\beta| = l$ and writing
\begin{equation*}
  \mathbb{H}^{\star}_{l}(v, \Omega)
  \coloneqq\!\!
  \sum_{\substack{\beta \in \{0,1\}^{n} \\ |\beta| = l}}\!\! \mathbb{H}_{\beta}(v, \Omega),
  \qquad
  \mathbb{H}^{\star,-j}_{l}(v, \Omega)
  \coloneqq\!\!
  \sum_{\substack{\beta \in \{0,1\}^{n},\,\beta_{j} = 0 \\ |\beta| = l}}\!\! \mathbb{H}_{\beta}(v, \Omega),
\end{equation*}
gives $e_{l}(D^{*})^{(x)} k = \mathbb{H}^{\star}_{l}\, k$ and
$e_{l}((D^{*})_{j})^{(x)} k = \mathbb{H}^{\star,-j}_{l}\, k$. Since $\nabla_{x} k = -\nabla_{y} k$,
acting on the $y$-variable adds a factor $(-1)^{l}$; composing the two variables,
$e_{l}(D^{*})^{(y)} e_{r}(D^{*})^{(x)} k = (-1)^{l}\,\mathbb{H}^{\star}_{l+r}\, k$, and likewise
$e_{l}((D^{*})_{j})^{(y)} e_{r}((D^{*})_{p})^{(x)} k = (-1)^{l}\,\mathbb{H}^{\star}_{l-j+r-p}\, k$
with the $-j,-p$ exclusions on each side. Applying the compact
operator~\eqref{eq:poly-op-compact} successively on the $x$- and $y$-arguments of
$K(x,y) = B\, k(x,y)$ and collecting the four cross-products yields
\begin{equation*}
\begin{aligned}
  k_{0,\theta}^{\mathrm{Arras}}(x, y)
  \;=\; k(x, y)\!\sum_{l, r = 0}^{n} (-1)^{l}\Bigl[
      &x^{\top} B\, y\,\mathbb{H}^{\star}_{l + r}(v, \Omega) \\[-2pt]
      &-\sum_{j=1}^{n}\alpha_{j}\,(s_{j}^{\top} B\, y)\,\bigl(2\mathbb{H}^{\star}_{l+r}(v,\Omega) - \mathbb{H}^{\star}_{l-j+r}(v,\Omega)\bigr) \\
      &-\sum_{p=1}^{n}\alpha_{p}\,(s_{p}^{\top} B\, x)\,\bigl(2\mathbb{H}^{\star}_{l+r}(v,\Omega) - \mathbb{H}^{\star}_{l+r-p}(v,\Omega)\bigr) \\
      &+\sum_{j, p = 1}^{n}\alpha_{j}\,\alpha_{p}\,(s_{j}^{\top} B\, s_{p})\,\bigl(4\mathbb{H}^{\star}_{l+r}(v,\Omega) - 2\mathbb{H}^{\star}_{l-j+r}(v,\Omega) \\[-2pt]
      &\hphantom{+\sum_{j, p = 1}^{n}\alpha_{j}\,\alpha_{p}\,(s_{j}^{\top} B\, s_{p})\,\bigl(}{} - 2\mathbb{H}^{\star}_{l+r-p}(v,\Omega) + \mathbb{H}^{\star}_{l-j+r-p}(v,\Omega)\bigr)
  \Bigr].
\end{aligned}
\end{equation*}
For the isotropic choice $B = I_{d}$ (Section~\ref{sec:num-laverny-comparison}), the quadratic
forms $x^{\top} B\, y$, $s_{j}^{\top} B\, y$, $s_{p}^{\top} B\, x$, $s_{j}^{\top} B\, s_{p}$ reduce
to $\ip{x}{y}$, $\ip{s_{j}}{y}$, $\ip{s_{p}}{x}$, $\ip{s_{j}}{s_{p}}$.
For numerical evaluation, we use the recursion from \cite{Holmquist1996}:
\begin{equation*}
  \mathbb{H}_{\beta + e_{i}}(v, \Omega)
  \;=\; v_{i}\,\mathbb{H}_{\beta}(v, \Omega)
       - \sum_{j=1}^{n} \beta_{j}\, \Omega_{ji}\,\mathbb{H}_{\beta - e_{j}}(v, \Omega).
\end{equation*}
% ---------------------------------------------------------------------
\subsection{Closed form of the single-jump Gaussian expectation}
\label{app:gaussian-single-jump}
% ---------------------------------------------------------------------

We derive the closed-form expression, announced in
Section~\ref{sec:numerical-applications}, of the single-jump
expectations appearing in the discrete Stein
kernel~\eqref{eq:ggc-stein-kernel-discrete} when $k$ is the Gaussian
(RBF) kernel
\(
  k(x,y) = \exp\!\bigl(-\|x-y\|^{2}/(2\sigma^{2})\bigr)
\)
of~\eqref{eq:rbf-kernel}. Throughout, $E \sim \mathrm{Exp}(1)$ and
$s \in \Rp^{d} \setminus \{0\}$ is a fixed atom of the Thorin measure.
We write $\mathrm{erfc}$ for the complementary error function and
$\mathrm{erfcx}(t)$ for its scaled
(numerically stable) version,
\[
  \mathrm{erfc}(t) \;=\; \frac{2}{\sqrt{\pi}}\!\int_{t}^{\infty}\! e^{-u^{2}}\,\mathrm{d}u,
  \qquad
  \mathrm{erfcx}(t) \;=\; e^{t^{2}}\,\mathrm{erfc}(t).
\]

\paragraph{Reduction to a half-line Gaussian integral.}
Since $E$ has density $z \mapsto e^{-z}$ on $(0,\infty)$,
\[
  \E\bigl[k(x + s\, E,\, y)\bigr]
  \;=\; \int_{0}^{\infty}\! e^{-z}\,
        \exp\!\left(-\frac{\|x - y + z s\|^{2}}{2\sigma^{2}}\right)\mathrm{d}z.
\]
It can be written as:
\begin{equation}
  \E\bigl[k(x + s\, E,\, y)\bigr]
  \;=\; k(x,y)\!\int_{0}^{\infty}\! \exp\!\bigl(-a\, z^{2} - c\, z\bigr)\,\mathrm{d}z,
  \qquad
  a \coloneqq \frac{\|s\|^{2}}{2\sigma^{2}},
  \quad
  c \coloneqq 1 + \frac{\ip{x-y}{s}}{\sigma^{2}},
  \label{eq:app-gauss-reduced}
\end{equation}
where the coefficient $a > 0$ because $s \neq 0$.
The integral can be rewritten as:
\[
  \int_{0}^{\infty}\! e^{-a z^{2} - c z}\,\mathrm{d}z
  \;=\; \frac{1}{2}\sqrt{\frac{\pi}{a}}\,
        \exp\!\Bigl(\frac{c^{2}}{4a}\Bigr)\,
        \mathrm{erfc}\!\Bigl(\frac{c}{2\sqrt{a}}\Bigr)
  \;=\; \frac{1}{2}\sqrt{\frac{\pi}{a}}\,
        \mathrm{erfcx}\!\Bigl(\frac{c}{2\sqrt{a}}\Bigr),
\]

\paragraph{Closed-form formulas.}
Substituting $a$ and $c$ from~\eqref{eq:app-gauss-reduced} back into the
identity, and noting that
$\tfrac{c}{2\sqrt{a}} = \bigl(\sigma^{2} + \ip{x-y}{s}\bigr)/(\sqrt{2}\,\sigma\|s\|)$,
we obtain
\begin{equation*}
  \E\bigl[k(x + s\, E,\, y)\bigr]
  \;=\; k(x,y)\,\sqrt{\frac{\pi}{2}}\,\frac{\sigma}{\|s\|}\,
        \mathrm{erfcx}\!\left(\frac{\sigma^{2} + \ip{x-y}{s}}{\sqrt{2}\,\sigma\,\|s\|}\right)
\end{equation*}
By symmetry of the kernel -- replacing $x-y$ by $-(x-y)$, i.e.\
$\ip{x-y}{s} \to -\ip{x-y}{s}$ -- the companion single-jump expectation
writes
\begin{equation*}
  \E\bigl[k(x,\, y + s\, E)\bigr]
  \;=\; k(x,y)\,\sqrt{\frac{\pi}{2}}\,\frac{\sigma}{\|s\|}\,
        \mathrm{erfcx}\!\left(\frac{\sigma^{2} - \ip{x-y}{s}}{\sqrt{2}\,\sigma\,\|s\|}\right).
\end{equation*}

\end{document}